\documentclass[a4paper,11pt]{amsart}
\usepackage[utf8]{inputenc}
\usepackage[T1]{fontenc}
\usepackage[english]{babel}
\usepackage{amssymb,amsfonts,amsmath,amsthm}
\usepackage{mathtools}
\usepackage{graphicx}
\usepackage[margin=3.5cm]{geometry}
\usepackage[hidelinks]{hyperref}

\newtheorem{theorem}{Theorem}[section]
\numberwithin{equation}{section}
\newtheorem{proposition}[theorem]{Proposition}
\newtheorem{lemma}[theorem]{Lemma}
\newtheorem{corollary}[theorem]{Corollary}

\newtheorem{lettertheorem}{Theorem}

\theoremstyle{definition}

\theoremstyle{remark}
\newtheorem{remark}[theorem]{Remark}

\newcommand{\R}{\mathbb{R}}
\newcommand{\s}{\mathbb{S}}
\newcommand{\N}{\mathbb{N}}
\newcommand{\QQ}{\mathcal{Q}}
\newcommand{\Gpq}{O(p{+}1)\times O(q{+}1)}
\DeclareMathOperator{\sgn}{sgn}
\DeclareMathOperator{\dive}{div}
\DeclareMathOperator{\dist}{dist}

\title[Bi-rotational self-expanders]{The global structure of bi-rotational self-expanders}
\author{Gregório Silva Neto}
\address{Instituto de Matemática, Universidade Federal de Alagoas, Maceió, 57072-900, Brazil}
\email{gregorio@im.ufal.br}
\date{}
\subjclass[2020]{53E10; 53C42, 34C05}
\keywords{Mean curvature flow, self-expander, equivariant hypersurface,
asymptotically conical end, Clifford cone}
\begin{document}

\begin{abstract}
We describe the self-expanders of the mean curvature flow in $\R^{p+q+2}$ invariant under $\Gpq$. Apart from the Clifford minimal cone, every such hypersurface is complete, proper and noncompact, and belongs to one of three families: two one-parameter families of embedded hypersurfaces diffeomorphic to $\R^{p+1}\times\s^{q}$ or to $\R^{q+1}\times\s^{p}$, whose profile curves leave an axis, and a two-parameter family diffeomorphic to $\R\times\s^{p}\times\s^{q}$, parametrized by the point of the profile curve closest to the origin, which is the generic family: its set of initial conditions is open and of full measure in the phase space. Every end is asymptotically conical, with explicit decay rates, and every such hypersurface has Euclidean volume growth. A profile curve of the third family has at most one loop, and the point of the curve closest to the origin lies on that loop.
\end{abstract}

\maketitle
\tableofcontents

\section{Introduction}

An immersion $X\colon\Sigma^{n}\to\R^{n+1}$ is a self-similar solution of the mean curvature flow if
\begin{equation}\label{eq:soliton}
H=c\langle X,\nu\rangle,\qquad c\in\R\setminus\{0\}.
\end{equation}
For $c<0$ it is a self-shrinker, and models the singularities of the flow, while for $c>0$ it is a self-expander, and models the flow emerging from a conical singularity. This paper treats the case $c>0$. The case $c<0$ is treated in \cite{SN1}, whose notation we follow.

The theory of self-expanders is organized around the cone at infinity. Ecker and Huisken \cite{EH} proved asymptotic convergence to self-similarly expanding solutions for entire graphical flows under suitable hypotheses on the initial data. In particular, rotationally symmetric graphical cones give rise to rotationally symmetric graphical self-expanders. Angenent, Ilmanen and Chopp \cite{ACI} presented numerical evidence for multiple rotationally symmetric expanding evolutions associated with certain double cones, suggesting nonuniqueness of the flow emerging from a cone. Ding \cite{Ding} constructed smooth self-expanders asymptotic to certain mean-convex cones and to certain minimal cones that are not area-minimizing. Bernstein and Wang \cite{BW} developed the general theory of asymptotically conical self-expanders, and Fong and McGrath \cite{FM} proved that mean-convex expanders asymptotic to an $O(n)$-invariant cone are rotationally symmetric. On the shrinking side, Park \cite{Park-fol} constructed families of bi-rotational self-shrinking ends with boundary, and the full bi-rotational self-shrinker phase flow is studied in \cite{SN1}. Bi-rotational symmetry also appears in the study of ancient mean curvature flows asymptotic to minimal quadratic cones \cite{Park-Simons}.

The purpose of the present paper is to study the global behavior of the self-expanders invariant under the action of the group $O(p+1)\times O(q+1)$, about which very little is known. The symmetry reduces \eqref{eq:soliton} to a three-dimensional ordinary differential equation system, and it makes it possible to determine the global geometry of the whole bi-rotational class: the possible types of endpoint, the diffeomorphism type of each hypersurface, completeness and properness, the asymptotic cone of every end together with the rate at which it is approached, a bound on the self-intersections of the profile curve, and the behavior of the generic trajectory in the phase space. The usual approach prescribes a cone at infinity and constructs the expanders asymptotic to it. Here the phase portrait is described first, and the question of how many expanders are asymptotic to a given cone becomes a shooting problem for the limit angle.

Let $\Gpq$ act on $\R^{n+1}=\R^{p+1}\times\R^{q+1}$, $n+1=p+q+2$, $p,q\ge1$. An invariant hypersurface is parametrized by
\begin{equation}\label{eq:param}
X(s,\theta_1,\theta_2)=\left(x(s)\Psi_1(\theta_1), y(s)\Psi_2(\theta_2)\right),
\end{equation}
where $\Psi_1$ and $\Psi_2$ parametrize $\s^{p}$ and $\s^{q}$, and the profile curve $\gamma(s)=(x(s),y(s))$ lies in the phase plane $\QQ=\{x>0,y>0\}$. We call such a hypersurface bi-rotational, and we use this term from now
on, writing the group explicitly only where it matters. 

Parametrizing $\gamma$ by arclength we have $\gamma'(s)=(\cos\sigma(s),\sin\sigma(s))$, and equation \eqref{eq:soliton} is equivalent to the three-dimensional system
\begin{equation}\label{eq:flow}
\left\{
\begin{aligned}
x'&=\cos\sigma,\\
y'&=\sin\sigma,\\
\sigma'&=-A(x)\sin\sigma+B(y)\cos\sigma,
\end{aligned}\right.
\end{equation}
on $M:=\QQ\times\s^1$, where
\begin{equation}\label{eq:AB}    
A(x)=\frac px+cx\quad\mbox{and}\quad B(y)=\frac qy+cy.
\end{equation}
We also write 
\[x=\rho\cos\psi,\quad y=\rho\sin\psi, \quad\mbox{and}\quad w:=\psi-\sigma\] 
for the polar coordinates and the relative phase, respectively, and set $\psi_0:=\arctan\sqrt{q/p}$. The ray $\{\psi=\psi_0\}$, called the Clifford ray, is parametrized by
\[
(x,y,\sigma)=(s\cos\psi_0,s\sin\psi_0,\psi_0),
\]
and solves \eqref{eq:flow} for every $c$, generating the Clifford minimal cone
\[
\mathcal{C}=\{(u,v)\in \R^{p+1}\times\R^{q+1};|u|=\sqrt{p/q}|v|\},
\] 
over $\s^{p}(\sqrt{p/(p+q)})\times\s^{q}(\sqrt{q/(p+q)})$.

\subsection{Results}

The first theorem classifies the bi-rotational self-expanders by the behavior of their profile curves at the endpoints, determines the diffeomorphism type of each of them, and parametrizes the two-ended family by the point of the profile curve closest to the origin. The three types are illustrated in Figure~\ref{fig:types}.

\begin{figure}[b]
\centering
\includegraphics[width=\textwidth]{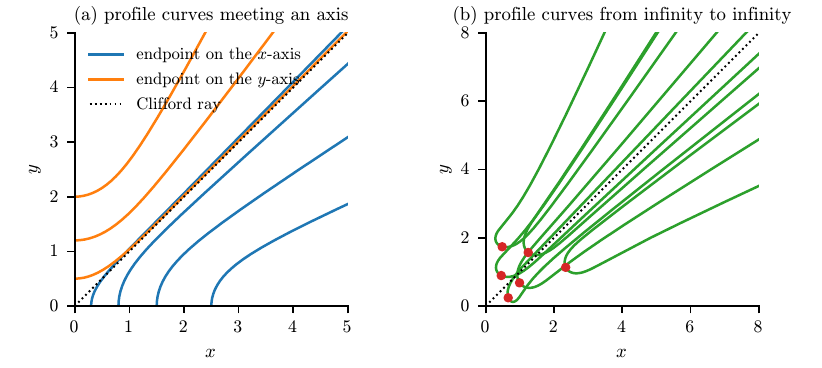}
\caption{The three types of profile curve of Theorem~\ref{thmA}, for $p=q=1$, $c=1$. In (a), curves of types (a) and (b), leaving the axes at $x_0\in\{0.3,0.8,1.5,2.5\}$ and $y_0\in\{0.5,1.2,2\}$. In (b), curves of type (c), with their closest points to the origin marked, at polar coordinates $(\rho,\psi)\in\{(1.8,1.3),(2.0,0.9),(1.0,1.1),(1.2,0.6),(0.7,0.35),(2.6,0.45)\}$. None of these six has a self-intersection. The dotted line is the Clifford ray, drawn for reference only: by Theorem~\ref{thmB} each end has an asymptotic ray of its own.}
\label{fig:types}
\end{figure}

\begin{lettertheorem}[Classification]\label{thmA}
Let $\Sigma$ be a bi-rotational self-expander associated with a maximal solution of \eqref{eq:flow} and different from the Clifford cone. Then $\Sigma$ is complete and noncompact, and its profile curve is of one of the following types:
\begin{enumerate}
\item[(a)] Endpoint on the $x$-axis: the curve leaves the $x$-axis orthogonally at a point $(x_0,0)$, the tangent angle satisfies $\sigma\in(0,\tfrac\pi2)$ along the whole curve, and $x$, $y$ and $\rho$ are strictly increasing. Consequently, $\Sigma$ is embedded and diffeomorphic to $\R^{q+1}\times\s^{p}$, and has one end. For every $x_0>0$ there is exactly one such curve.
\item[(b)] Endpoint on the $y$-axis: the mirror image of (a). Here, $\Sigma$ is embedded and diffeomorphic to $\R^{p+1}\times\s^{q}$.
\item[(c)] From infinity to infinity: both endpoints satisfy $\rho\to\infty$, and $\rho$ has exactly one critical point along the curve, a nondegenerate minimum. In this case, $\Sigma$ is diffeomorphic to $\R\times\s^{p}\times\s^{q}$, and has two ends.
\end{enumerate}
Moreover, every point of $\QQ$ is the closest point to the origin of exactly one curve of type (c), up to orientation.
\end{lettertheorem}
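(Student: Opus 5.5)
The plan rests on two monotone quantities along any solution of \eqref{eq:flow}. The first is the radial derivative. Writing $\rho\rho'=x\cos\sigma+y\sin\sigma=\rho\cos w$, we get $\rho'=\cos w$. The second is the support function $\langle \gamma,\nu\rangle=x\sin\sigma-y\cos\sigma=\rho\sin w$. Differentiating, $(\rho\sin w)'=\rho\,\sigma'\cos w\,$ up to sign conventions. Substituting $\sigma'=-A\sin\sigma+B\cos\sigma=-c\rho\sin w-\tfrac px\sin\sigma+\tfrac qy\cos\sigma$ shows that at a critical point of $\rho$ (where $\cos w=0$) we have $\rho''=-w'\sin w$, and $w'=\psi'-\sigma'$ carries the term $+c\rho\sin w$ from $-\sigma'$.

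\textbf{Step 1: critical points of $\rho$ are strict minima.} At a critical point, $\sin w=\pm1$, so $\psi'=\sin w/\rho=\pm1/\rho$, and $-\tfrac px\sin\sigma+\tfrac qy\cos\sigma$ has a fixed sign combination. I expect to obtain $\rho''=1/\rho+c\rho+(\text{curvature terms of the fibers})$. One has to check that these terms are positive: at $\cos w=0$, $\sigma=\psi\mp\pi/2$, and one computes $-\tfrac px\sin\sigma+\tfrac qy\cos\sigma=\pm(\tfrac px\cos\psi+\tfrac qy\sin\psi)/\!\!\;$, which equals $\pm(p+q)/\rho$. This gives $\rho''=(1+p+q)/\rho+c\rho>0$. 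Hence every critical point of $\rho$ is a nondegenerate minimum, so there is at most one. This is the key computation, and I would verify the sign carefully first.

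\textbf{Step 2: endpoint analysis.} Let the maximal solution be defined on $(s_-,s_+)$. As $s\to s_\pm$ the curve either reaches $\partial\QQ$, or has $\rho\to\infty$, or accumulates in the interior; the last is impossible for a solution of an ODE with smooth coefficients unless it extends. Since $\rho$ is eventually monotone (by Step 1), $\rho$ has a limit at each end.
\begin{itemize}
\item[(i)] If the curve reaches an axis, say $y\to0$, then the $q/y$ singularity forces orthogonal contact. Here I would use the standard singular-ODE existence and uniqueness for each $x_0>0$, say via the fixed-point formulation used in \cite{SN1}, together with the analysis there showing that the curve cannot reach the origin or hit an axis tangentially.
\item[(ii)] A finite limit of $\rho$ at an infinite arclength end is excluded, because arclength $\infty$ with $|\rho'|\le1$ bounded and $\rho$ bounded would force accumulation in a compact region. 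A Lyapunov-type argument would exclude this, using that $\rho\sin w$ is monotone where $\cos w$ has a sign.
\end{itemize}
Completeness follows since each end is either an orthogonal axis contact (where the hypersurface closes smoothly) or $\rho\to\infty$ with unbounded arclength.

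\textbf{Step 3: the types.} A curve cannot hit both axes: between the two contacts, $\rho$ would have an interior maximum, contradicting Step 1. It also cannot hit the same axis twice, for the same reason. For type (a), I would show that $\sigma\in(0,\pi/2)$ is preserved. At $\sigma=0$ we get $\sigma'=B>0$, and at $\sigma=\pi/2$ we get $\sigma'=-A<0$, so the open quadrant of directions is invariant forward from the axis, where $\sigma=\pi/2$ initially and $\sigma'$ starts negative by the expansion. Monotonicity of $x,y,\rho$ follows, and with it embeddedness and the diffeomorphism type. Type (b) is the mirror image. Otherwise both ends have $\rho\to\infty$, so $\rho$ decreases and then increases, and it has a minimum by Step 1 (a curve with $\rho$ monotone throughout would have an end with finite $\rho$ that is not on an axis). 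This yields $\R\times\s^p\times\s^q$.

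\textbf{Step 4: parametrization.} Given $(x_*,y_*)\in\QQ$, the closest point requires $\cos w=0$, so $\sigma=\psi_*\pm\pi/2$. The two choices are the same curve with opposite orientation. Existence and uniqueness of the solution through this initial condition then give exactly one curve, and by Step 1 it has a minimum there. That curve cannot be of type (a) or (b): along those, $\rho'=\cos w>0$ up to the axis, so it has no interior critical point. Hence it is of type (c). The Clifford ray is the only exclusion, and it is consistent because along it $w=0$.

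\textbf{Main obstacle.} I expect the hardest step to be Step 2, ruling out bounded or oscillating ends and handling the axis singularity. There I would lean on the analogous results of \cite{SN1} adapted to $c>0$.
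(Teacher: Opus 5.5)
Your Step 1 is correct, and it is a pointwise substitute for the paper's monotone quantity $F=\Phi\rho\rho'$ (Corollary~\ref{cor:unimodal}). At a zero of $\rho'=\cos w$ one does get $\rho''=n/\rho+c\rho>0$, so $\rho'$ vanishes at most once, and upward.

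The genuine gap is in Step 2. This is the one place where $c>0$ has to be used essentially: you must exclude an endpoint with $s_+=\infty$ along which $\rho$ stays bounded. Neither of your arguments does this.

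\begin{itemize}
\item A maximal solution is forced to leave every compact subset of $M$ only when $s_+<\infty$. When $s_+=\infty$, a solution of a smooth system may accumulate in the interior, or creep towards $\partial\QQ$ without converging. For $c<0$ bounded ends of infinite length really occur (the winding and recurrence of \cite{SN1}), so the analogous results there cannot be imported.
\item $\rho\sin w$ is not a Lyapunov function. One has $(\rho\sin w)'=-\rho\,\sigma'\cos w$, and $\sigma'=-A\sin\sigma+B\cos\sigma$ has no sign, so fixing the sign of $\cos w$ gives nothing.
\item Knowing from Step 1 that $\rho$ is eventually monotone with a finite limit only yields $\int|\cos w|\,ds<\infty$, not finite length.
\end{itemize}

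Without this exclusion you cannot conclude that a curve with no endpoint on an axis has $\rho\to\infty$ at both ends. So type (c), its two ends and its topology, and the fact that the trajectory through $(x_0,y_0,\psi_P-\frac\pi2)$ is of type (c), are all left unproved.

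The missing ingredient is a computation you already make in Step 3, applied at all four axis directions. Since $A,B>0$, one has
\[
\sigma'=B>0 \text{ at } \sigma\equiv0,\quad \sigma'=-B<0 \text{ at } \sigma\equiv\pi,\quad \sigma'=-A<0 \text{ at } \sigma\equiv\tfrac\pi2,\quad \sigma'=A>0 \text{ at } \sigma\equiv\tfrac{3\pi}2 .
\]
Hence $\sin\sigma$ and $\cos\sigma$ each change sign at most once, from $-$ to $+$. This is the sign statement of Corollary~\ref{cor:mono}, which the paper derives from the strictly increasing quantities $I=f(x)\sin\sigma$ and $J=g(y)\cos\sigma$. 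It makes $x$ and $y$ monotone near each endpoint. If both stay bounded they converge, and since $|x'|+|y'|\ge1$ the end has finite length. Then $s_+<\infty$ and $\gamma$ converges to a point of $\partial\QQ$. Otherwise one of $x,y$ diverges and $\rho\to\infty$ (Proposition~\ref{prop:endpoints}). With this, Step 2 closes and the rest of your outline goes through.

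A smaller point concerns Step 3. The claim that ``$\rho$ would have an interior maximum between two axis contacts'' needs $\rho$ to increase as the curve leaves an axis. That is not automatic, because $\rho'\to0$ at the axis. It is cleaner to use your quadrant invariance: it makes $x$ and $y$ strictly increasing away from the axis, so the other endpoint cannot lie on an axis (Proposition~\ref{prop:cusp}).
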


The second theorem describes the ends. For clarity, in the bi-rotational setting, an end is called asymptotically conical when its profile curve approaches a ray and its tangent becomes radial. Equivalently, the dilations of $\Sigma$ converge in $C^{1}$, on compact annuli away from the vertex, to the invariant cone generated by that ray. The theorem below gives the quantitative form of this convergence, with the rates $|\sin w|=O(\rho^{-2})$ and $|\psi-\psi_\infty|=O(\rho^{-2})$, which are what the later sections use.

\begin{lettertheorem}[Ends]\label{thmB}
Let $\gamma$ be a profile curve  of a bi-rotational self-expander  $\Sigma$ distinct from the Clifford cone. Along each end of $\gamma$, the following hold:
\begin{enumerate}
\item[(i)] $x\to\infty$ and $y\to\infty$;
\item[(ii)] the polar angle converges to some $\psi_\infty\in(0,\tfrac\pi2)$, the tangent becomes radial,
\[
|\sin w|=O(\rho^{-2}),\quad |\psi-\psi_\infty|=O(\rho^{-2}),\quad\mbox{and}\quad |\mathcal A|^{2}=O(\rho^{-2}).
\]
In particular, the profile curve lies at distance $O(\rho^{-1})$ from the ray $\{\psi=\psi_\infty\}$;
\item[(iii)] $x$ and $y$ are eventually strictly increasing, and the end is a graph over each coordinate axis, with slopes converging to $\tan\psi_\infty$ and $\cot\psi_\infty$;
\item[(iv)] if $\psi_\infty\ne\psi_0$, then $\psi$ is eventually strictly monotone, and the end approaches the ray $\{\psi=\psi_\infty\}$ from the side away from the Clifford ray;
\item[(v)] If $\psi\ne\psi_0$ along the whole curve, then, when moving away from the axis, $\psi$ is strictly increasing along a curve of type (a) and strictly decreasing along a curve of type (b). Along a curve of type (c), $\psi$ has exactly one critical point on the whole curve, which is a nondegenerate global minimum if $\psi<\psi_0$, and a nondegenerate global maximum if $\psi>\psi_0$. Consequently, the mean curvature of $\Sigma$ with respect to $\nu$ is positive along a curve of type (a), negative along a curve of type (b), and changes sign exactly once along a curve of type (c).
\end{enumerate}
Consequently, $\Sigma$ is properly immersed. If $\psi_1,\ldots,\psi_k$ are the asymptotic angles of its $k$ ends, where $k=1$ for types (a) and (b), and $k=2$ for type (c), then
\[
\lim_{R\to\infty}\frac{\mathrm{vol}(\Sigma\cap B_R)}{R^n}
=\frac{\omega_p\omega_q}{n}\sum_{j=1}^{k}\cos^p\psi_j\,\sin^q\psi_j>0,
\]
where $n=p+q+1$, $\omega_p=\mathrm{vol}(\s^p)$ and $\omega_q=\mathrm{vol}(\s^q)$.
\end{lettertheorem}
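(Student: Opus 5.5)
The plan is to work with the polar form of \eqref{eq:flow}. Writing $\rho'=\cos w$ and $\rho\psi'=\sin w$ with $w=\psi-\sigma$, we get
\[
w'=\psi'-\sigma'=\frac{\sin w}{\rho}+A(x)\sin\sigma-B(y)\cos\sigma .
\]
Substituting $\sigma=\psi-w$ and expanding $A,B$, the $c$-terms combine to
\[
c\bigl(x\sin\sigma-y\cos\sigma\bigr)=-c\rho\sin w .
\]
This is the key damping term. The remaining terms are
\[
\frac{p\sin\sigma}{x}-\frac{q\cos\sigma}{y},
\]
which are bounded by $O(1/\rho)$ times a function of $\psi$, provided $\psi$ stays away from $0$ and $\tfrac\pi2$. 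So along an end we have, schematically,
\[
w'=-c\rho\sin w+\frac{F(\psi,w)}{\rho}, \qquad F(\psi,0)=\frac{p\tan\psi-q\cot\psi}{\ldots}.
\]

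\textbf{Step 1: the end escapes.} I would first show that every end has $\rho\to\infty$; I take this from Theorem~\ref{thmA}, since every end goes to infinity. Then I show that $\psi$ stays in a compact subinterval of $(0,\tfrac\pi2)$ along the end, that is, that $x,y\to\infty$. Near the axis $y\to0$ the term $q\cos\sigma/y$ dominates and pushes $\sigma$ upward, so the curve cannot approach the $x$-axis at large $\rho$. I would formalize this with a barrier argument on the quantity $y\sin\sigma$, or on $y^q\cos$-type first integrals of the linearization. \textbf{I expect this to be the main obstacle}: the uniform control of $\psi$ away from $0$ and $\tfrac\pi2$ and of $\sin w$ before the asymptotic regime sets in. My first attempt would be a comparison with the one-rotational expander ODE near each axis.

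\textbf{Step 2: decay rates, giving (ii).} Once $\psi$ is confined, the scalar equation for $w$ is of the form $w'=-c\rho\sin w+O(\rho^{-1})$ with $\rho'=\cos w$. First, $w$ enters a neighbourhood of $0$: it cannot approach $\pi$, since then $\rho$ would decrease, contradicting escape. An ODE comparison then gives
\[
\sin w=\frac{F(\psi,0)}{c\rho^2}+O(\rho^{-4}),
\]
so $|\sin w|=O(\rho^{-2})$. Next,
\[
\frac{d\psi}{d\rho}=\frac{\tan w}{\rho}=O(\rho^{-3}),
\]
which is integrable. Hence $\psi\to\psi_\infty$ with $|\psi-\psi_\infty|=O(\rho^{-2})$, and the distance to the ray is $\rho\,|\psi-\psi_\infty|=O(\rho^{-1})$. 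For the second fundamental form, the principal curvatures are $\sigma'$, $\sin\sigma/x$ and $\cos\sigma/y$. Using $\sigma'=\psi'-w'=O(\rho^{-3})$ and the fact that the other two are $O(\rho^{-1})$, we obtain $|\mathcal A|^2=O(\rho^{-2})$.

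\textbf{Step 3: items (iii) and (iv).} Item (iii) follows because $\sigma\to\psi_\infty\in(0,\tfrac\pi2)$, so $x'$ and $y'$ become positive and the slopes converge. For (iv), the sign of $\psi'$ is the sign of $\sin w$, which eventually equals the sign of $F(\psi_\infty,0)$, i.e.\ the sign of $\psi_\infty-\psi_0$ when this is nonzero. So $\psi$ moves monotonically away from the Clifford ray.

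\textbf{Step 4: item (v).} Item (v) uses the identity for $\psi''$ at critical points of $\psi$. At such a point $w=0$ or $\pi$, so $\psi''=\pm w'/\rho$ equals a nonzero multiple of $p\tan\psi-q\cot\psi$, whose sign is that of $\psi-\psi_0$. Hence critical points of $\psi$ are strict minima where $\psi<\psi_0$ and strict maxima where $\psi>\psi_0$, and two consecutive ones would force an intermediate critical point of the opposite type. The curve cannot cross $\psi=\psi_0$ tangentially without coinciding with the Clifford ray, by uniqueness. Combining this with the axis behaviour in types (a) and (b), and with (iv) at both ends in type (c), gives the count of critical points. For the mean curvature, $\langle X,\nu\rangle$ is proportional to $\rho\sin w$, so $H$ has the sign of $\psi'$.

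\textbf{Properness and volume growth.} Properness follows since $\rho\to\infty$ at every end. For volume, on each end we have
\[
\mathrm{vol}=\int \omega_p\omega_q\,x^py^q\,ds, \qquad x^py^q\sim\rho^{n-1}\cos^p\psi_j\sin^q\psi_j,
\]
with $ds\sim d\rho$ by Step 2. Integrating up to $R$ gives the stated limit, and the compact part contributes $o(R^n)$.
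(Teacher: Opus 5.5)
Your overall architecture matches the paper's: the polar form, the damping term $-c\rho\sin w$ against an $O(\rho^{-1})$ forcing, integrating $\psi'$ in $\rho$, and the volume computation. However, two steps fail as written.

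\emph{The confinement step is missing.} Your Step 2 depends entirely on $\psi$ staying in a compact subinterval of $(0,\frac\pi2)$. That fact, together with $\rho\to\infty$, is exactly item (i) and the claim $\psi_\infty\in(0,\frac\pi2)$, and you leave it as an unspecified barrier or comparison argument. The missing idea is a monotone quantity. The paper uses $L=h(\psi)\cos w$, with $h(\psi)=\cos^p\psi\sin^q\psi$. This is the angular part of $F=\Phi\rho\rho'$, and it satisfies $\rho L'=(n+c\rho^2)h(\psi)\sin^2w\ge0$ (Lemma~\ref{lem:L}). Since $\rho'=\cos w$ changes sign at most once (Corollary~\ref{cor:unimodal}), it is positive near an end with $\rho\to\infty$. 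Hence $L(s_0)>0$ for some $s_0$, and then $h(\psi(s))\ge L(s)\ge L(s_0)>0$ for $s\ge s_0$. This confines $\psi$ to some $[\delta,\frac\pi2-\delta]$ and gives $x,y\ge\rho\sin\delta\to\infty$. The paper actually proves (i) first by another route, Proposition~\ref{thm:noeixo}: monotonicity of $I=f(x)\sin\sigma$ forces $\sin\sigma>0$ on an end with $x\to\infty$, and the barrier $dy/dx\ge b_0/(2A(x))$ makes $y$ grow at least like $\log x$; it then uses $L$ for $\psi_\infty\in(0,\frac\pi2)$. The same unimodality should replace your ``$w$ cannot approach $\pi$'', which by itself does not stop $w$ from lingering near $\pm\frac\pi2$. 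Unimodality gives $|w|<\frac\pi2$ near the end, and then Gronwall on unit intervals gives $|w|\le K\Pi/\rho=O(\rho^{-2})$, with $\Pi=p/x+q/y$. The $O(\rho^{-4})$ expansion you assert is neither needed nor proved.

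\emph{The sign of $\psi'$ is wrong, and it reverses (iv).} In fact $\psi'=(xy'-yx')/\rho^2=\sin(\sigma-\psi)/\rho=-\sin w/\rho$. Your balance $\sin w\approx-\kappa(\psi)/(c\rho^2)$, with $\kappa=q\cot\psi-p\tan\psi$, is correct. So $\psi'$ eventually has the sign of $\kappa(\psi_\infty)$, that is, of $\psi_0-\psi_\infty$. Thus $\psi$ moves \emph{toward} the Clifford direction, and the end lies on the far side of $R_\infty$.

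Your Step 3 concludes the opposite. This contradicts (iv) and also your own Step 4: for a type (c) curve with $\psi<\psi_0$, your version of (iv) at both ends would force an interior maximum of $\psi$, while Step 4 says every critical point in $\{\psi<\psi_0\}$ is a minimum. Even with the sign fixed, reading off the eventual sign of $w$ relies on your unproved expansion. The paper (Proposition~\ref{prop:lado}) instead uses $w'=-(c\rho+n/\rho)\sin w-\kappa(\psi)\cos w/\rho$. At a zero of $w$ this gives $w'=-\kappa(\psi)/\rho$, so $w$ crosses $0$ in one direction only. If $\psi_\infty<\psi_0$, then $\kappa\ge\kappa_0>0$ near the end, and $w\ge0$ on a whole tail is impossible, since then $w'\le-\kappa_0/(2\rho)$, which is not integrable.

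Once the sign is corrected, your Step 4 is a genuinely neater route than the paper's. The identity $\psi''=\kappa(\psi)/\rho^2$ holds at every critical point of $\psi$, for $w=0$ and $w=\pi$ alike. It immediately gives at most one critical point on $\{\psi<\psi_0\}$. You avoid splitting a type (c) curve at its closest point and using $L$ to keep $|w|<\frac\pi2$ on each branch. Existence of the minimum then follows from the corrected (iv) at both ends, with a separate word when a limit angle equals $\psi_0$.

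Finally, $\sigma'=O(\rho^{-3})$ would need $w'=O(\rho^{-3})$, which you have not shown. But $\sigma'=c\rho\sin w+E/\rho=O(\rho^{-1})$ already gives $|\mathcal A|^2=O(\rho^{-2})$.
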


\begin{remark}
    It is a consequence of Theorem \ref{thmB} that no end of a bi-rotational self-expander has one coordinate bounded while the other diverges, so that no end is asymptotic to a cylinder $\s^{q}(r)\times\R^{p+1}$ or $\s^{p}(r)\times\R^{q+1}$, nor to a coordinate subspace.
\end{remark}

The third theorem bounds the self-intersections of a profile curve. By a loop we mean a simple closed subarc $\gamma|_{[a,b]}$, with $\gamma(a)=\gamma(b)$ and distinct tangents at $a$ and $b$. By Theorem \ref{thmA}, the profile curves of hypersurfaces of types (a) and (b) are simple (since the hypersurfaces are embedded). The behavior of the profile curves of type (c) is the content of the following

\begin{lettertheorem}[Loops]\label{thmC}
Let $\Sigma$ be a bi-rotational self-expander with a profile curve $\gamma$ of type (c) of Theorem \ref{thmA}. Then $\gamma$ has at most one loop $\gamma|_{[a,b]}$, and if it exists,
\begin{enumerate}
\item[(i)] the point of $\gamma$ closest to the origin lies on $\gamma((a,b))$, and $\gamma((a,b))\subset\{x<x_P, y<y_P\}$, where $P=(x_P,y_P)=\gamma(a)=\gamma(b)$;
\item[(ii)] the turning of the tangent angle along the loop satisfies
\[
\pi<|\sigma(b)-\sigma(a)|<\frac{3\pi}2;
\]
\item[(iii)] the region $\Omega$ bounded by the loop is not contained in $[\sqrt{p/c},\infty)\times[\sqrt{q/c},\infty)$, so that the loop meets the strip $\{x<\sqrt{p/c}\}$ or the strip $\{y<\sqrt{q/c}\}$.
\end{enumerate}
Moreover, along $\gamma$, the functions $\sin\sigma$ and $\cos\sigma$ change sign exactly once each, from $-$ to $+$, and the turning of the tangent angle along the whole curve is
\[
\lim_{s\to s_+}\sigma-\lim_{s\to s_-}\sigma=\psi_\omega-\psi_\alpha\pm\pi ,
\]
where $\psi_\alpha$ and $\psi_\omega$ are the limit angles of the two ends.
\end{lettertheorem}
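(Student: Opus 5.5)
The plan is to work with the signs of $\cos\sigma$ and $\sin\sigma$, i.e. with the monotonicity of $x$ and $y$ along $\gamma$, and to deduce everything from a single sign-change claim. Define $F=\sigma'$ restricted to the zero sets of $x'$ and $y'$. At a point where $\cos\sigma=0$ we have $\sigma'=-A(x)\sin\sigma$. If $\sin\sigma=1$ this is negative, and if $\sin\sigma=-1$ it is positive. Hence $\sigma$ can cross $\pi/2$ only downward and $-\pi/2$ only upward, so $\cos\sigma$ can only change sign from $-$ to $+$ (modulo $2\pi$). Likewise, at zeros of $\sin\sigma$ we get $\sigma'=B(y)\cos\sigma$, so $\sin\sigma$ changes sign only from $-$ to $+$. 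Consequently each of $x'$ and $y'$ changes sign at most once. By Theorem~\ref{thmB}(iii), both $x$ and $y$ are eventually increasing at the forward end. Reversing orientation, the backward end is one where $x,y$ decrease toward the curve's start, i.e. along the curve $x'<0$ and $y'<0$ near $s_-$. Therefore each of $\cos\sigma$ and $\sin\sigma$ changes sign exactly once, from $-$ to $+$. This fixes the total turning: $\sigma$ starts near $\psi_\alpha\pm\pi$ (tangent pointing inward, opposite the radial direction) and ends at $\psi_\omega$, by Theorem~\ref{thmB}(ii). Continuity then gives the stated formula, with the sign determined by whether $\sigma$ passes through the fourth quadrant or the second quadrant.

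Next, the loops. With $x'$ and $y'$ each changing sign once, say at $s_x$ and $s_y$, the curve splits into at most three monotone pieces: both coordinates decreasing, then one increasing and one decreasing, then both increasing. A self-intersection $\gamma(a)=\gamma(b)$ with $a<b$ needs $x(a)=x(b)$ and $y(a)=y(b)$, so both $a<\min(s_x,s_y)$ and $b>\max(s_x,s_y)$ must hold. The subarc $(a,b)$ first goes down and left from $P$ and then returns up and right. Hence $x<x_P$ and $y<y_P$ on $(a,b)$, which proves (i) except the closest-point claim.

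Uniqueness of the loop follows next. A second intersection pair $(a',b')$ would, by the same monotonicity, be nested or equal. Take $a'<a$; then $x(a')>x(a)$, which forces $x(b')>x(b)$, and the same holds for $y$. The two subarcs would then cross each other, contradicting the monotone structure of the decreasing branch and the increasing branch, which can meet at most once. I would show this by viewing the increasing branch as a graph over $x$ on the region where both branches are graphs, and checking that the difference is monotone.

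The closest point lies on the loop because $\rho\rho'=x\cos\sigma+y\sin\sigma$ is negative while both $x$ and $y$ decrease, and positive while both increase. The unique minimum of $\rho$ from Theorem~\ref{thmA}(c) therefore lies in $[\min(s_x,s_y),\max(s_x,s_y)]\subset(a,b)$.

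For (ii), the tangent turns by more than $\pi$ along a simple closed loop with a corner. By the rotation index (Hopf), the total turning plus the exterior angle at $P$ equals $\pm2\pi$. At $P$ the incoming direction lies in the third quadrant and the outgoing direction in the first, so the exterior angle lies in $(0,\pi)$ in absolute value. The bound $3\pi/2$ comes from the fact that $\sigma$ moves through exactly one quarter-turn boundary of each kind. Making these quadrant bookkeepings precise requires some care.

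For (iii), integrate over $\Omega$. Writing the equation as $\sigma'=\langle \nabla\log(x^py^q)+c\,(x,y),\nu\rangle$-type flux, the turning along $\partial\Omega$ equals the flux of the field $(A(x),B(y))$ through the loop. Applying the divergence theorem gives $\int_\Omega (c-p/x^2)+(c-q/y^2)$. If $\Omega\subset[\sqrt{p/c},\infty)\times[\sqrt{q/c},\infty)$ this integrand is $\ge0$, and comparing signs with the turning in (ii) yields a contradiction. I expect the main obstacle to be this sign matching, together with the non-crossing argument used for uniqueness of the loop.
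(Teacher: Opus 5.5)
There are two genuine gaps, both in the treatment of the loop itself.

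\textbf{Uniqueness of the loop.} Your observation is correct: every self-intersection $\gamma(a)=\gamma(b)$ with $a<b$ satisfies $a<\min(s_x,s_y)$ and $b>\max(s_x,s_y)$, and hence any two self-intersection pairs are nested. This even gives the box in (i) more directly than the paper's route through Lemma~\ref{lem:sinais}. But you then try to finish by showing that the decreasing and increasing branches meet at most once. That would mean $\gamma$ has at most one self-intersection, which is stronger than the theorem and is not expected to hold. Remark~\ref{rem:questoes} points out that the two ends of a curve of type (c) may cross several times outside the loop, and in the numerics the number of crossings grows as the closest point approaches the origin. So the ``monotone difference of graphs'' step would fail.

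What you need is already in hand. Suppose $\gamma|_{[a,b]}$ and $\gamma|_{[a',b']}$ are both loops with $a'<a$. Nesting gives $a'<a<b<b'$, and then $\gamma(a)=\gamma(b)$ contradicts the simplicity of $\gamma|_{[a',b']}$. The paper argues the other way round: simplicity rules out nesting, so two loops would overlap as $a_1<a_2<b_1<b_2$. Lemma~\ref{lem:laco}(ii) then forces the chord $\gamma(a_2)-\gamma(a_1)$ into both the open third and the open first quadrant, a contradiction.

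\textbf{The strict inequality $|\sigma(b)-\sigma(a)|>\pi$ in (ii).} You know that $\gamma'(a)$ lies in the open third quadrant and $\gamma'(b)$ in the open first quadrant. This only gives $\pi/2<|\theta|\le\pi$ for the exterior angle at $P$, not $|\theta|\in(0,\pi)$. The value $|\theta|=\pi$, that is $\gamma'(b)=-\gamma'(a)$, is compatible with the quadrant data. For such a cusp the rotation index theorem gives $|\Delta\sigma|=\pi$, so your bookkeeping only yields $\pi\le|\Delta\sigma|<3\pi/2$.

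The missing idea is that the tangents at the corner are never opposite. The paper proves this in Lemma~\ref{lem:laco}(i) by reversibility. If $\sigma(b)=\sigma(a)+\pi$ modulo $2\pi$, then $\Gamma(b)=\mathcal R(\Gamma(a))$ with $\mathcal R(x,y,\sigma)=(x,y,\sigma+\pi)$. Lemma~\ref{lem:simetrias}(i) and uniqueness then give $\Gamma(b+t)=\mathcal R(\Gamma(a-t))$. At $t=(a-b)/2$ the midpoint $m$ would satisfy $\Gamma(m)=\mathcal R(\Gamma(m))$, which is absurd. Figure~\ref{fig:loop}(b) shows how close to a cusp a loop can come, so this step cannot be skipped.

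\textbf{What does go through.} For (iii) the non-strict bound is enough, since $\varepsilon\Delta\sigma=2\pi-\varepsilon\theta\ge\pi>0$. Your flux computation does match signs: $\sigma'=\langle(A,B),(-\sin\sigma,\cos\sigma)\rangle$ gives $\Delta\sigma=\varepsilon\iint_\Omega(p/x^{2}+q/y^{2}-2c)\,dx\,dy$, exactly as in the paper. The remaining parts are correct: the single sign changes of $\sin\sigma$ and $\cos\sigma$, the total turning, the box in (i), and locating the minimum of $\rho$ through the sign of $xx'+yy'$.
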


The limit angles $\psi_\alpha$ and $\psi_\omega$ are the extreme values of a monotone angular quantity along $\gamma$ (Corollary~\ref{cor:extremos}).

Our last theorem says that curves of type (c) are the typical ones, in the sense that they form a set of full measure in the sense we introduce below. Let 
\[
\mathfrak m:=\Phi dx dy d\sigma,\quad\mbox{where}\quad\Phi=x^{p}y^{q}e^{c(x^{2}+y^{2})/2},
\]
a measure on the phase space $M$.

\begin{lettertheorem}[Genericity]\label{thmD}
Let $c>0$. The flow of \eqref{eq:flow} preserves $\mathfrak m$. The set of initial conditions whose trajectory is of type (c) is open and has full $\mathfrak m$-measure, and its complement, formed by the trajectories of types (a), (b) and the Clifford ray, is $\mathfrak m$-null.
\end{lettertheorem}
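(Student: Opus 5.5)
The plan is to prove the three assertions in order: invariance of $\mathfrak m$, openness of the type (c) set, and nullity of its complement. The last two rest on the classification in Theorem~\ref{thmA}.

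\emph{Invariance of $\mathfrak m$.} Let $F=(\cos\sigma,\sin\sigma,-A(x)\sin\sigma+B(y)\cos\sigma)$ be the vector field of \eqref{eq:flow}. By Liouville's theorem it suffices to check $\dive(\Phi F)=0$ in the coordinates $(x,y,\sigma)$. Since $\partial_x\log\Phi=p/x+cx=A$ and $\partial_y\log\Phi=B$, we get
\[
\dive(\Phi F)=\Phi\bigl(A\cos\sigma+B\sin\sigma+\partial_\sigma(-A\sin\sigma+B\cos\sigma)\bigr)=\Phi\bigl(A\cos\sigma+B\sin\sigma-A\cos\sigma-B\sin\sigma\bigr)=0.
\]
Hence the flow preserves $\mathfrak m$, wherever it is defined.

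\emph{Openness.} Along any trajectory $\rho'=\cos(\sigma-\psi)=\cos w$, so critical points of $\rho$ are points where $\cos w=0$. By Theorem~\ref{thmA}, $\rho$ is strictly increasing along types (a) and (b), and it has no critical point on the Clifford ray. A type (c) trajectory, in contrast, has a nondegenerate minimum of $\rho$ at some point $Q_0=\phi_{t_0}(m_0)\in M$, where $m_0$ is the initial condition. Nondegeneracy means $\rho'(t_0)=0$ and $\rho''(t_0)>0$.

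Now take $m$ near $m_0$. Since $Q_0$ lies in the interior of $M$, continuous dependence on initial conditions over the compact interval between $0$ and $t_0$ shows that $\phi_t(m)$ is defined for $t$ near $t_0$. It stays $C^2$-close to $\phi_t(m_0)$ there. Because $\rho'$ changes sign from $-$ to $+$ transversally near $t_0$, the function $t\mapsto\rho(\phi_t(m))$ also has an interior critical point, a strict local minimum. By the classification, the trajectory through $m$ is therefore neither of type (a) or (b) nor the Clifford ray, so it is of type (c). Hence the type (c) set is open.

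\emph{Nullity of the complement.} By Theorem~\ref{thmA}, the complement consists of the Clifford ray together with the union $E_a\cup E_b$ of all curves of types (a) and (b). The Clifford ray is a smooth curve, so it is $\mathfrak m$-null, since $\mathfrak m$ is absolutely continuous with respect to Lebesgue measure. For $E_a$, write $E_a=\bigcup_{k\ge1}E_a^k$, where $E_a^k$ consists of the points at arclength $s\ge 1/k$ along the curve $\gamma_{x_0}$ leaving $(x_0,0)$. Then $E_a^k$ is the image of
\[
G_k(x_0,t)=\phi_t\bigl(\gamma_{x_0}(1/k)\bigr),\qquad x_0>0,\ t\ge0,
\]
a map from a $2$-dimensional domain into the $3$-manifold $M$. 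If $G_k$ is locally Lipschitz, its image has Lebesgue measure zero, hence $\mathfrak m$-measure zero, and so $E_a$ is $\mathfrak m$-null. The same argument applies to $E_b$.

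The flow $\phi_t$ is smooth in the interior of $M$. So everything reduces to the local Lipschitz (ideally smooth) dependence of $\gamma_{x_0}(1/k)$ on $x_0$. This is a singular initial value problem at the axis, because $B(y)\cos\sigma$ blows up as $y\to0$, and it is the step I expect to be the main obstacle. I would first try to extract this dependence from the existence and uniqueness argument used for Theorem~\ref{thmA}(a). Writing the curve near the axis as a graph $x=x_0+u(y)$, the equation $u''/(1+u'^2)=A(x_0+u)-(q/y+cy)u'$ has a regular singular point at $y=0$. The standard fixed-point construction yields solutions depending smoothly on $x_0$, and that smooth dependence propagates to arclength $1/k$.
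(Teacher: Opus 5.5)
Your invariance computation and your openness argument are essentially the paper's. There, openness is the implicit function theorem applied to $(t,v)\mapsto\eta(\varphi_t(v))$, with $\eta=x\cos\sigma+y\sin\sigma$, at the surface $\{\eta=0\}$, which the flow crosses transversally; this is your persistence of a nondegenerate minimum of $\rho$.

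For the nullity of the complement you take a genuinely different route. The paper never parametrizes the families (a) and (b). Lemma~\ref{lem:nulo} instead shows that the set $C^{+}\cup C^{-}$ of all initial conditions whose trajectory ends in finite arclength is $\mathfrak m$-null. The argument has three parts:
\begin{enumerate}
\item[(1)] Points with $-s_-\le\varepsilon$ lie in an $\varepsilon$-collar of the axes.
\item[(2)] On bounded sets this collar has $\mathfrak m$-measure $O(\varepsilon^{k+1})$, because $\Phi$ vanishes like $x^{p}$ and $y^{q}$ there.
\item[(3)] Points with bounded $-s_-$ are covered by $O(1/\varepsilon)$ flow-translates of that collar, all of equal measure by invariance.
\end{enumerate}
The classification enters only through the inclusion $M\setminus U\subset C^{+}\cup C^{-}$. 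That argument is soft and uses the invariance of $\mathfrak m$ essentially. It needs nothing about how a curve depends on its axis point.

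Your dimension count dispenses with the invariance and gives more: the complement is a countable union of Lipschitz images of planar sets. However, all the work sits in the step you defer, and that step does not follow from Proposition~\ref{ap:existencia} as stated. You have to run its contraction uniformly for $x_0$ near a fixed $x^{*}$, on a single space with $a$, $K$, $\delta$ chosen from $x^{*}$, so that one contraction constant $\lambda<1$ serves for all these $x_0$. On a given pair $(X,V)$, the maps $T_{x_0}$ and $T_{x_0'}$ differ only by the constant $x_0-x_0'$ in $\tilde X$. Hence the fixed points $u_{x_0}$ satisfy $\|u_{x_0}-u_{x_0'}\|\le|x_0-x_0'|/(1-\lambda)$, and the uniqueness part of that proposition identifies them with $\gamma_{x_0}$. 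Lipschitz dependence is enough, so you do not need smoothness.

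Two smaller points. It is cleaner to flow the phase point at height $\delta$ over its whole maximal interval than to work at arclength $1/k$; that domain is open because $s_+$ and $-s_-$ are lower semicontinuous. You must also add the reversed points $\mathcal R(E_a)\cup\mathcal R(E_b)$, which are initial conditions of types (a) and (b) as well. Since $\mathcal R$ is a diffeomorphism, this costs nothing.
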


\begin{remark}
Since $\Phi$ is smooth and positive, $\mathfrak m$ and the Lebesgue measure of $M$ are mutually absolutely continuous, so that set also has full Lebesgue measure. Its complement is Lebesgue-null and therefore has empty interior, so the set is dense in $M$.
\end{remark}

\subsection{The method}
For $c>0$ the functions $A$ and $B$ defined by \eqref{eq:AB} are positive on $\QQ$. This single fact is what separates the expanding case from the shrinking one: the two functions
\[
I=x^{p}e^{cx^{2}/2}\sin\sigma,\qquad J=y^{q}e^{cy^{2}/2}\cos\sigma
\]
of \cite{SN1}, which are only piecewise monotone for $c<0$, are strictly increasing along every trajectory. There are no cylinders, no winding, no closed orbits and no recurrence, and the analysis is global. From these monotone quantities we derive the behavior at the finite endpoints, the quantitative conical asymptotics of the ends, the diffeomorphism types, the bound on the self-intersections and the genericity of the two-ended family.

\subsection{Organization}
Section~\ref{sec:system} sets up the profile system and the monotone quantities on which everything rests. Section~\ref{sec:endpoints} describes the endpoints, proves completeness and proves Theorem~\ref{thmA}. Section~\ref{sec:ends} proves Theorem~\ref{thmB}, and Section~\ref{sec:loops} proves Theorems~\ref{thmC} and~\ref{thmD} and discusses the questions left open by the classification. The local facts about finite endpoints used in Section~\ref{sec:endpoints} are proved in \cite{SN1} for every $c\ne0$.

\subsection*{Acknowledgements}
The author was partially supported by CNPq, Brazil, grant number 306189/2023-2.

\subsection*{AI statement}
Generative AI tools (Claude, Anthropic, and ChatGPT, OpenAI) were used to assist with language editing, organization of the exposition, numerical experiments, suggestions for some arguments, and the discussion and checking of selected calculations. All AI-assisted material was reviewed and edited by the author, who takes full responsibility for the mathematical statements, proofs, and references.

\subsection*{Data availability statement}
No data are associated with this manuscript. Figures~\ref{fig:types}, \ref{fig:lado} and~\ref{fig:loop} and Remark~\ref{rem:lado-numerico} are based on numerical solutions of \eqref{eq:flow} with the parameters and initial data given in the captions and in the remark. These computations are not certified, and no proof relies on them.

\subsection*{Conflict of interest statement}
The author declares that he has no conflict of interest.

\section{The profile system and its monotone quantities}\label{sec:system}

\subsection{The system, its symmetries and its polar form}\label{sub:system}

We begin by writing the soliton equation as a system for the profile curve. The computation is the standard one in equivariant geometry, and we record it together with the second fundamental form, which is used in Section~\ref{sec:ends}.

\begin{proposition}\label{prop:sistema}
Let $\Sigma$ be a hypersurface invariant under $\Gpq$, parametrized by \eqref{eq:param}, whose profile curve $\gamma(s)=(x(s),y(s))$ is parametrized by arclength, and write $\gamma'(s)=(\cos\sigma(s),\sin\sigma(s))$. Then
\[
\nu=\left(\Psi_1\sin\sigma,-\Psi_2\cos\sigma\right)
\]
is a unit normal, the principal curvatures are $-\sigma'$, $-\sin\sigma/x$ and $\cos\sigma/y$ with multiplicities $1$, $p$ and $q$,
\[
H=-\sigma'-\frac{p\sin\sigma}{x}+\frac{q\cos\sigma}{y},\quad
\langle X,\nu\rangle=x\sin\sigma-y\cos\sigma,
\]
and
\begin{equation}\label{eq:A2}
|\mathcal A|^{2}=(\sigma')^{2}+p\frac{\sin^{2}\sigma}{x^{2}}+q\frac{\cos^{2}\sigma}{y^{2}} .
\end{equation}
Consequently, $\Sigma$ satisfies \eqref{eq:soliton} if and only if $\gamma$ solves \eqref{eq:flow}.
\end{proposition}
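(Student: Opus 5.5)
The proof is a direct computation in the coordinates $(s,\theta_1,\theta_2)$ of \eqref{eq:param}, organized by the fact that the hypersurface is a warped product over the profile curve.

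\emph{Step 1: the normal.} Differentiating \eqref{eq:param} gives
\[
X_s=(\cos\sigma\,\Psi_1,\ \sin\sigma\,\Psi_2),\qquad X_{\theta_1^i}=(x\,\partial_i\Psi_1,0),\qquad X_{\theta_2^j}=(0,y\,\partial_j\Psi_2).
\]
Since $|\Psi_k|=1$, we have $\langle\Psi_k,\partial\Psi_k\rangle=0$, so the vector $\nu=(\Psi_1\sin\sigma,-\Psi_2\cos\sigma)$ is orthogonal to the angular tangent vectors. It is orthogonal to $X_s$ because $\cos\sigma\sin\sigma-\sin\sigma\cos\sigma=0$, and it has unit length. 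The support function then follows immediately:
\[
\langle X,\nu\rangle=x\sin\sigma-y\cos\sigma .
\]

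\emph{Step 2: the principal directions and curvatures.} By invariance, the coordinate directions $\partial_s$, $T\s^p$ and $T\s^q$ are orthogonal and diagonalize the shape operator. Use the convention $\langle X_{ab},\nu\rangle$ for the second fundamental form and $-\langle d\nu(e),e\rangle$-type signs, fixing the overall sign by the stated formula for $H$.

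In the profile direction, $X_{ss}=\sigma'(-\sin\sigma\,\Psi_1,\cos\sigma\,\Psi_2)$. Its inner product with $\nu$ is $-\sigma'$.

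In the $\s^p$ directions, $d\nu(X_{\theta_1})=(\sin\sigma\,\partial\Psi_1,0)=\frac{\sin\sigma}{x}X_{\theta_1}$, so $\s^p$ contributes $-\sin\sigma/x$ with the same sign convention as the $-\sigma'$ term. Equivalently, this is $\langle X_{\theta\theta},\nu\rangle/|X_\theta|^2$ with $X_{\theta\theta}=-x\Psi_1$ on the unit sphere. Likewise, $\s^q$ contributes $\cos\sigma/y$.

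Summing with multiplicities $1$, $p$, $q$ gives $H$, and summing the squares gives \eqref{eq:A2}.

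\emph{Step 3: the equivalence.} Substitute into \eqref{eq:soliton}:
\[
-\sigma'-\frac{p\sin\sigma}{x}+\frac{q\cos\sigma}{y}=c(x\sin\sigma-y\cos\sigma).
\]
Rearranging gives $\sigma'=-(p/x+cx)\sin\sigma+(q/y+cy)\cos\sigma$, which is the third equation of \eqref{eq:flow} with $A$ and $B$ as in \eqref{eq:AB}. The first two equations are simply the arclength parametrization. Each step is reversible, so the equivalence holds in both directions.

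\emph{Main obstacle.} The only delicate point is sign bookkeeping. The sign convention for $H$ and the principal curvatures must be chosen consistently with $\nu$ so that the profile curvature appears as $-\sigma'$ and the sphere curvatures as $-\sin\sigma/x$ and $\cos\sigma/y$. I would fix this by checking one case: the Clifford ray with $\sigma=\psi_0$ gives $H=0=\langle X,\nu\rangle$, and indeed $-p\sin\psi_0/x+q\cos\psi_0/y=0$ on that ray. The squared norm $|\mathcal A|^2$ is insensitive to these signs.
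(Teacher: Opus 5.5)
Your proposal is correct and follows essentially the paper's computation: the tangent frame, the normal $\nu$, the eigenvalues of the shape operator $-d\nu$ (your convention $\langle X_{ab},\nu\rangle$ is the same thing), and substitution into \eqref{eq:soliton}; the paper obtains the profile eigenvalue from $\partial_s\nu=\sigma'\partial_sX$, which is also the clean justification for your ``by invariance'' diagonalization. One small correction: the Clifford-ray check cannot fix the overall sign of $H$, because both sides of \eqref{eq:soliton} vanish on that ray, so it only confirms the relative sign of the $p$- and $q$-terms; the overall sign is actually fixed by the $-d\nu$ convention you already use.
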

\begin{proof}
Differentiating \eqref{eq:param}, the tangent space is spanned by
\[
\partial_sX=(\cos\sigma\Psi_1,\sin\sigma\Psi_2),
\]
by the vectors $(x\partial_i\Psi_1,0)$ and by the vectors $(0,y\partial_j\Psi_2)$, which are mutually orthogonal, and $\nu$ is orthogonal to all of them and has unit length. From \[\partial_s\nu=\sigma'\partial_sX,\quad \partial_i\nu=(\sin\sigma\partial_i\Psi_1,0)=\frac{\sin\sigma}{x}(x\partial_i\Psi_1,0),\] and \[\partial_j\nu=(0,-\cos\sigma\partial_j\Psi_2)=-\frac{\cos\sigma}{y}(0,y\partial_j\Psi_2)\] we read off the principal curvatures of the shape operator $-d\nu$, and hence $H$ and \eqref{eq:A2}. Finally
\[
H-c\langle X,\nu\rangle=-\left[\sigma'+\left(\frac px+cx\right)\sin\sigma-\left(\frac qy+cy\right)\cos\sigma\right],
\]
which vanishes exactly when $\gamma$ solves \eqref{eq:flow}.
\end{proof}

The system has two symmetries, which will spare us half of the work: reversing the orientation of the profile curve, and exchanging the two factors of the group.

\begin{lemma}[Symmetries]\label{lem:simetrias}
Let $(x,y,\sigma)$ solve \eqref{eq:flow} with parameters $(p,q)$. Then
\begin{enumerate}
\item[(i)] $s\mapsto\left(x(-s),y(-s),\sigma(-s)+\pi\right)$ solves \eqref{eq:flow} with the same parameters;
\item[(ii)] $s\mapsto\left(y(s),x(s),\tfrac\pi2-\sigma(s)\right)$ solves \eqref{eq:flow} with parameters $(q,p)$.
\end{enumerate}
\end{lemma}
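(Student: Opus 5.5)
The plan is to verify both statements by direct substitution into \eqref{eq:flow}, using the chain rule and the explicit form \eqref{eq:AB} of $A$ and $B$. For (ii), we also track how $A$ and $B$ are exchanged when $(p,q)$ is replaced by $(q,p)$.

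For (i), set $\tilde x(s)=x(-s)$, $\tilde y(s)=y(-s)$ and $\tilde\sigma(s)=\sigma(-s)+\pi$. Then
\[
\tilde x'(s)=-x'(-s)=-\cos\sigma(-s)=\cos\tilde\sigma(s),
\]
and similarly $\tilde y'(s)=-\sin\sigma(-s)=\sin\tilde\sigma(s)$. For the third equation, $\tilde\sigma'(s)=-\sigma'(-s)=A(x)\sin\sigma-B(y)\cos\sigma$, evaluated at $-s$. Since $\sin\tilde\sigma=-\sin\sigma$ and $\cos\tilde\sigma=-\cos\sigma$, this equals $-A(\tilde x)\sin\tilde\sigma+B(\tilde y)\cos\tilde\sigma$, as required. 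The point is that the right-hand side of the $\sigma$-equation is odd under $\sigma\mapsto\sigma+\pi$, while the time reversal contributes a second sign change, so the two cancel.

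For (ii), set $\tilde x=y$, $\tilde y=x$ and $\tilde\sigma=\tfrac\pi2-\sigma$, so that $\cos\tilde\sigma=\sin\sigma$ and $\sin\tilde\sigma=\cos\sigma$. The first two equations follow at once: $\tilde x'=y'=\sin\sigma=\cos\tilde\sigma$ and $\tilde y'=x'=\cos\sigma=\sin\tilde\sigma$. Denote by $\tilde A(t)=q/t+ct$ and $\tilde B(t)=p/t+ct$ the functions associated with the parameters $(q,p)$, so that $\tilde A=B$ and $\tilde B=A$ as functions of one variable. Then
\[
\tilde\sigma'=-\sigma'=A(x)\sin\sigma-B(y)\cos\sigma=-\tilde A(\tilde x)\sin\tilde\sigma+\tilde B(\tilde y)\cos\tilde\sigma,
\]
since $-\tilde A(\tilde x)\sin\tilde\sigma=-B(y)\cos\sigma$ and $\tilde B(\tilde y)\cos\tilde\sigma=A(x)\sin\sigma$.

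Both parts are routine sign-checking, and the only point needing care is the bookkeeping of $(p,q)\mapsto(q,p)$ in (ii). No earlier result beyond the definition of \eqref{eq:flow} and \eqref{eq:AB} is needed.
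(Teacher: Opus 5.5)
Your proposal is correct and follows the same route as the paper: both parts are verified by direct substitution into \eqref{eq:flow}, with the sign cancellation between time reversal and $\sigma\mapsto\sigma+\pi$ in (i), and the identification $\tilde A=B$, $\tilde B=A$ for the parameters $(q,p)$ in (ii). Your sign computations in both parts check out.
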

\begin{proof}
Both are direct substitutions. In (i), the derivatives of the first two components change sign, and so do $\cos(\sigma+\pi)$ and $\sin(\sigma+\pi)$, while the third equation is odd under $s\mapsto-s$ together with $\sigma\mapsto\sigma+\pi$. In (ii), $\tilde x'=y'=\sin\sigma=\cos\tilde\sigma$ and $\tilde y'=x'=\cos\sigma=\sin\tilde\sigma$, and $\tilde\sigma'=-\sigma'$ becomes $-\tilde A\sin\tilde\sigma+\tilde B\cos\tilde\sigma$ with $\tilde A(t)=q/t+ct$ and $\tilde B(t)=p/t+ct$, which are the coefficients for the parameters $(q,p)$.
\end{proof}
The next lemma gives an equivalent form of system \eqref{eq:flow} in polar coordinates.
\begin{lemma}[Polar form]\label{lem:polar}
In the polar coordinates $x=\rho\cos\psi$, $y=\rho\sin\psi$, with $w=\psi-\sigma$, a solution of \eqref{eq:flow} satisfies
\begin{equation}\label{eq:polar}
\left\{
\begin{aligned}\rho'&=\cos w,\\ \psi'&=-\frac{\sin w}{\rho},\\
\sigma'&=c\rho\sin w+\frac{E(\psi,\sigma)}{\rho},
\end{aligned}\right.
\end{equation}
where $E(\psi,\sigma):=\dfrac{q\cos\sigma}{\sin\psi}-\dfrac{p\sin\sigma}{\cos\psi}$.
\end{lemma}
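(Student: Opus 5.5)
This is a direct change of variables, so I will differentiate $\rho$ and $\psi$ along the curve and substitute into the third equation of \eqref{eq:flow}.

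\emph{Radial and angular equations.} Since $\rho^2=x^2+y^2$, we have $\rho\rho'=xx'+yy'=\rho(\cos\psi\cos\sigma+\sin\psi\sin\sigma)$. Hence $\rho'=\cos(\psi-\sigma)=\cos w$. Similarly, $\rho^2\psi'=xy'-yx'=\rho(\cos\psi\sin\sigma-\sin\psi\cos\sigma)=-\rho\sin w$, which gives $\psi'=-\sin w/\rho$. Both computations use only $\rho>0$, which holds on $\QQ$.

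\emph{Equation for $\sigma'$.} Substituting $x=\rho\cos\psi$ and $y=\rho\sin\psi$ into \eqref{eq:AB}, I split
\[
\sigma'=-\frac{p\sin\sigma}{x}+\frac{q\cos\sigma}{y}+c(y\cos\sigma-x\sin\sigma).
\]
The singular terms are exactly $E(\psi,\sigma)/\rho$, because $p\sin\sigma/x=p\sin\sigma/(\rho\cos\psi)$ and $q\cos\sigma/y=q\cos\sigma/(\rho\sin\psi)$. For the linear terms,
\[
c(y\cos\sigma-x\sin\sigma)=c\rho(\sin\psi\cos\sigma-\cos\psi\sin\sigma)=c\rho\sin(\psi-\sigma)=c\rho\sin w.
\]
Adding the two parts gives the third line of \eqref{eq:polar}.

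There is no real obstacle here. The only points needing care are the sign convention $w=\psi-\sigma$, which fixes the sign in the $\psi'$ equation, and noting that $\cos\psi$ and $\sin\psi$ are nonzero on $\QQ$, so $E$ is well defined.
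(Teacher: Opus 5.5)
Your proof is correct and follows essentially the same route as the paper: you compute $\rho'=(xx'+yy')/\rho=\cos w$ and $\psi'=(xy'-yx')/\rho^{2}=-\sin w/\rho$, then group the $c$-terms of the third equation of \eqref{eq:flow} into $c\rho\sin w$, with the remaining terms giving $E(\psi,\sigma)/\rho$. Your added remark that $\cos\psi,\sin\psi\neq0$ on $\QQ$, so that $E$ is well defined, is a harmless extra check.
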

\begin{proof}
One has $\rho'=(xx'+yy')/\rho=\cos\psi\cos\sigma+\sin\psi\sin\sigma=\cos w$ and $\psi'=(xy'-yx')/\rho^{2}=-\sin w/\rho$. For the third equation, group the terms of \eqref{eq:flow} containing $c$:
\[
\sigma'=c(-x\sin\sigma+y\cos\sigma)+\frac{q\cos\sigma}{y}-\frac{p\sin\sigma}{x}
=c\rho\sin w+\frac{E(\psi,\sigma)}{\rho},
\]
since $-x\sin\sigma+y\cos\sigma=\rho\sin(\psi-\sigma)$.
\end{proof}

\subsection{Monotone quantities}\label{sec:mono}

The analysis rests on a few quantities that are monotone along every trajectory. The first two are built from the weights
\[
f(x):=x^{p}e^{cx^{2}/2},\qquad g(y):=y^{q}e^{cy^{2}/2},
\]
which satisfy $f'=Af$ and $g'=Bg$, and are
\begin{equation}\label{eq:IJ}
I:=f(x)\sin\sigma,\qquad J:=g(y)\cos\sigma .
\end{equation}
They come from splitting the right-hand side of the equation for $\sigma$ in \eqref{eq:flow} into a term depending only on $(\sigma,x)$ and a term depending only on $(\sigma,y)$. Indeed, $I$ is a first integral of the planar system $\sigma'=-A(x)\sin\sigma$, $x'=\cos\sigma$, and $J$ is a first integral of $\sigma'=B(y)\cos\sigma$, $y'=\sin\sigma$, as in \cite[Section~2.2]{SN1}. They are not conserved by \eqref{eq:flow}, but their derivatives along it have the signs of $B$ and $A$, as the next lemma shows. Since
\[
\frac{I^{2}}{f^{2}}+\frac{J^{2}}{g^{2}}=1,
\]
the pair $(I,J)$ determines the tangent angle. The zeros of $I$ are the critical points of $y$, and the zeros of $J$ are those of $x$.

\begin{lemma}[The two functions]\label{lem:IJ}
Along every solution of \eqref{eq:flow},
\begin{equation}\label{eq:dJ}
I'=B(y) f(x)\cos^{2}\sigma,\qquad J'=A(x) g(y)\sin^{2}\sigma .
\end{equation}
In particular, $I'$ has the sign of $B(y)$ and $J'$ has the sign of $A(x)$, and both vanish only where the profile curve is vertical, respectively horizontal.
\end{lemma}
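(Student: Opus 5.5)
The plan is a direct computation: differentiate $I$ and $J$ along a solution of \eqref{eq:flow}, using the identities $f'=Af$ and $g'=Bg$ together with the chain rule. The sign statements will then follow from the positivity of $f$ and $g$ on $\QQ$.

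\emph{Derivative of $I$.} By the chain rule and \eqref{eq:flow},
\[
I'=f'(x)x'\sin\sigma+f(x)\cos\sigma\,\sigma'
=A f\cos\sigma\sin\sigma+f\cos\sigma\left(-A\sin\sigma+B\cos\sigma\right).
\]
The two terms $\pm A f\sin\sigma\cos\sigma$ cancel, leaving $I'=B f\cos^{2}\sigma$.

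\emph{Derivative of $J$.} In the same way,
\[
J'=g'(y)y'\cos\sigma-g(y)\sin\sigma\,\sigma'
=B g\sin\sigma\cos\sigma-g\sin\sigma\left(-A\sin\sigma+B\cos\sigma\right).
\]
Now the terms $\pm Bg\sin\sigma\cos\sigma$ cancel, leaving $J'=A g\sin^{2}\sigma$.

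\emph{Signs and zeros.} On $\QQ$ we have $x,y>0$, so $f(x)>0$ and $g(y)>0$. Hence $I'$ has the sign of $B(y)$ and $J'$ has the sign of $A(x)$.

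For the vanishing claims, $A$ and $B$ never vanish for $c>0$, the case treated in this paper. So $I'=0$ exactly where $\cos\sigma=0$, which is where the tangent is vertical. Likewise $J'=0$ exactly where $\sin\sigma=0$, which is where the tangent is horizontal.

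If one wants the statement for general $c$, the zeros of $A$ (at $x=\sqrt{-p/c}$) and of $B$ would also have to be included. I would phrase the lemma as it is used here, namely with $c>0$, as the method section indicates.

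There is no real obstacle: the whole content is the cancellation of the cross terms. That cancellation is exactly why $I$ is a first integral of the $x$-subsystem and $J$ a first integral of the $y$-subsystem, so the only thing to check is the correct sign of $\sigma'$ in each product.
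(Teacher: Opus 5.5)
Your computation is correct and is the same as the paper's proof: differentiate with $f'=Af$, $g'=Bg$ and the third equation of \eqref{eq:flow}, and the cross terms cancel (the paper writes out $I'$ and calls $J'$ the mirror image). Your remark that the ``vanish only where vertical/horizontal'' clause needs $A,B\neq0$ is a fair sharpening: the lemma is stated along every solution of \eqref{eq:flow}, for any $c$, but that clause holds as written only when $A$ and $B$ do not vanish, in particular for $c>0$.
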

\begin{proof}
Using $f'=Af$, $x'=\cos\sigma$ and the third equation of \eqref{eq:flow},
\[
\begin{aligned}
I'&=f'x'\sin\sigma+f\sigma'\cos\sigma\\
&=Af\cos\sigma\sin\sigma+f\cos\sigma\left(-A\sin\sigma+B\cos\sigma\right)\\
&=Bf\cos^{2}\sigma.
\end{aligned}
\]
The computation for $J$ is the mirror image.
\end{proof}

The two signs in \eqref{eq:dJ} are what separate the expanding case from the shrinking one. For $c<0$ the coefficients $A$ and $B$ vanish at $x_*=\sqrt{-p/c}$ and $y_*=\sqrt{-q/c}$ and change sign there, so $I$ and $J$ are only piecewise monotone, and the two lines $\{x=x_*\}$ and $\{y=y_*\}$, which are themselves solutions, split the phase plane into four regions of distinct behavior, and this is the source of the winding and of the recurrence found in \cite{SN1}. For $c>0$ no such splitting occurs, as the next lemma records.

\begin{lemma}[Positivity]\label{lem:positivo}
Let $c>0$. On $\QQ$,
\[
A(x)\ge2\sqrt{pc}>0,\qquad B(y)\ge2\sqrt{qc}>0,
\]
with equality at $x=\sqrt{p/c}$ and $y=\sqrt{q/c}$. The functions $f$ and $g$ are strictly increasing and tend to $0$ at $0$ and to $\infty$ at $\infty$.
\end{lemma}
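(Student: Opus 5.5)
The plan is to treat the two coefficient functions separately; they have the same form, and by the symmetry of Lemma~\ref{lem:simetrias}(ii) it suffices to handle $A$ and $f$. The claims for $B$ and $g$ then follow by replacing $(p,x)$ with $(q,y)$.

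For the lower bound on $A$, I would apply the AM--GM inequality to the two positive terms $p/x$ and $cx$, which is available because $p\ge1$, $c>0$ and $x>0$. This gives
\[
A(x)=\frac px+cx\ge 2\sqrt{\frac px\cdot cx}=2\sqrt{pc}>0.
\]
Equality holds exactly when $p/x=cx$, that is, at $x=\sqrt{p/c}$. Alternatively one can write
\[
A(x)-2\sqrt{pc}=\frac{\bigl(\sqrt{cx^{2}}-\sqrt p\bigr)^{2}}{x},
\]
which makes both the inequality and the equality case visible at once.

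For the statements about $f$, I would use the identity $f'=Af$ recorded above. Since $f>0$ on $(0,\infty)$ and $A>0$ there by the first step, we get $f'>0$, so $f$ is strictly increasing. For the limits, write $f(x)=x^{p}e^{cx^{2}/2}$. As $x\to0^{+}$ we have $x^{p}\to0$ because $p\ge1$, and $e^{cx^{2}/2}\to1$, so $f\to0$. As $x\to\infty$ both factors are at least $1$ eventually and $e^{cx^{2}/2}\to\infty$ because $c>0$, so $f\to\infty$.

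None of these steps is a genuine obstacle. The only point needing care is that the hypothesis $c>0$ is used in every step: in AM--GM, in the positivity of $f'$, and in the divergence at infinity. This is exactly where the argument would fail in the shrinking case.
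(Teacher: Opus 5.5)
Your proposal is correct and follows essentially the same approach as the paper: AM--GM applied to $p/x$ and $cx$, monotonicity from $f'=Af>0$, and the limits read off from $f=x^{p}e^{cx^{2}/2}$ (the paper phrases the limits via $\log f=p\log x+\frac c2x^{2}$). One small point: you do not need Lemma~\ref{lem:simetrias}(ii) for $B$ and $g$, since they have the same form as $A$ and $f$ and the identical computation applies directly.
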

\begin{proof}
The inequality is the arithmetic--geometric mean inequality applied to $p/x$ and $cx$. Since $f'=Af>0$, $f$ is increasing, and $\log f=p\log x+\frac c2x^{2}$ tends to $-\infty$ at $0$ and to $+\infty$ at $\infty$. The same holds for $g$.
\end{proof}

Positivity of $A$ and $B$ turns $I$ and $J$ into globally monotone quantities, which is the source of the rigidity of the expanding case.

\begin{corollary}[Global monotonicity]\label{cor:mono}
Let $c>0$. Along every solution of \eqref{eq:flow}, $I$ and $J$ are strictly increasing. Consequently each of $\sin\sigma$ and $\cos\sigma$ vanishes at most once, and changes sign, if at all, from $-$ to $+$. In particular $x$ and $y$ are monotone outside at most one instant each.
\end{corollary}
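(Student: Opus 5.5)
The corollary follows from Lemma~\ref{lem:IJ} combined with Lemma~\ref{lem:positivo}. I will first prove strict monotonicity of $I$ and $J$, and then convert it into the statements about the signs of $\sin\sigma$ and $\cos\sigma$.

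\emph{Strict monotonicity.} By \eqref{eq:dJ} and Lemma~\ref{lem:positivo}, on $\QQ$ we have $I'=B f\cos^2\sigma\ge0$ and $J'=A g\sin^2\sigma\ge0$. Equality $I'=0$ holds only where $\cos\sigma=0$. I claim these zeros are isolated. At a point with $\cos\sigma=0$ we have $\sin\sigma=\pm1$, so \eqref{eq:flow} gives
\[
\sigma'=-A(x)\sin\sigma=\mp A(x)\neq0 .
\]
Hence $\cos\sigma$ vanishes only at isolated instants. A nondecreasing function whose derivative vanishes only on a discrete set is strictly increasing, so $I$ is strictly increasing. The argument for $J$ is the same, using $\sigma'=\pm B(y)\neq0$ where $\sin\sigma=0$. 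Alternatively, the symmetry in Lemma~\ref{lem:simetrias}(ii) reduces the case of $J$ to that of $I$.

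\emph{Sign changes.} Since $f>0$, $\sin\sigma$ has the sign of $I$. A strictly increasing function vanishes at most once, and if it does, it changes sign there from $-$ to $+$. Therefore $\sin\sigma$ vanishes at most once, and any sign change is from $-$ to $+$. The same argument with $J$ and $g>0$ gives the statement for $\cos\sigma$.

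\emph{Monotonicity of $x$ and $y$.} Since $x'=\cos\sigma$, on each side of the unique zero of $\cos\sigma$ (if there is one) $x'$ has constant sign, so $x$ is monotone there. The same holds for $y$ with $y'=\sin\sigma$.

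The only point needing care is the step from nonnegative to strictly positive derivative, that is, the isolatedness of the zeros of $\cos\sigma$ and $\sin\sigma$. This is exactly where the nonvanishing of $A$ and $B$ is used a second time.
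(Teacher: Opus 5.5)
Your proof is correct and follows essentially the paper's argument. You get $I'\ge0$, $J'\ge0$ from Lemma~\ref{lem:IJ} and Lemma~\ref{lem:positivo}, show the zeros of $\cos\sigma$ and $\sin\sigma$ are isolated because $\sigma'=\mp A(x)$ (resp.\ $\pm B(y)$) is nonzero there, and read the signs off $I$ and $J$. The only cosmetic difference is that the paper computes $\frac{d}{ds}\cos\sigma=A(x)>0$ at a zero, which also gives the direction of the sign change, whereas you take the direction from the strict monotonicity of $J$; both work.
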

\begin{proof}
By \eqref{eq:dJ} and Lemma~\ref{lem:positivo}, $I'\ge0$, with equality only where $\cos\sigma=0$. At such a point $\sin\sigma=\pm1$ and $\sigma'=-A(x)\sin\sigma$, so that
\[
\frac{d}{ds}\cos\sigma=-\sin\sigma\,\sigma'=A(x)\sin^{2}\sigma=A(x)>0 ,
\]
and the zeros of $\cos\sigma$ are isolated, each of them being a passage from $-$ to $+$. Likewise, at a zero of $\sin\sigma$ one has $\frac{d}{ds}\sin\sigma=B(y)>0$. A continuous function with nonnegative derivative vanishing only at isolated points is strictly increasing. The same argument applies to $J$, with $B$ and $\sin\sigma$. Since $f,g>0$, $\sgn I=\sgn\sin\sigma$ and $\sgn J=\sgn\cos\sigma$.
\end{proof}

The third quantity involves the weight $\Phi:=f(x)g(y)$, for which $\partial_x\log\Phi=A$ and $\partial_y\log\Phi=B$.

\begin{lemma}[A third monotone quantity]\label{lem:terceira}
Let $c\in\R$, $n:=p+q+1$, and set $F:=\Phi\langle\gamma,\gamma'\rangle=\Phi\rho\rho'$ along a solution of \eqref{eq:flow}. Then
\begin{equation}\label{eq:F}
F'=\Phi(n+c\rho^{2}).
\end{equation}
\end{lemma}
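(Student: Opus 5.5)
We differentiate $F=\Phi\,\langle\gamma,\gamma'\rangle$ with the product rule and treat the two factors separately.

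For the weight, $\partial_x\log\Phi=A$ and $\partial_y\log\Phi=B$ give
\[
\Phi'=\Phi\bigl(A(x)\cos\sigma+B(y)\sin\sigma\bigr).
\]

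For the second factor we use $\gamma''=\sigma'(-\sin\sigma,\cos\sigma)$, so that
\[
\langle\gamma,\gamma'\rangle'=|\gamma'|^2+\langle\gamma,\gamma''\rangle=1+\sigma'(y\cos\sigma-x\sin\sigma).
\]
Substituting $\sigma'=-A\sin\sigma+B\cos\sigma$ from \eqref{eq:flow} and adding the contribution of $\Phi'$, we obtain
\[
\frac{F'}{\Phi}=(A\cos\sigma+B\sin\sigma)(x\cos\sigma+y\sin\sigma)+1+(B\cos\sigma-A\sin\sigma)(y\cos\sigma-x\sin\sigma).
\]

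Next we expand and group by the coefficients $A$ and $B$. The coefficient of $A$ is
\[
x\cos^2\sigma+y\sin\sigma\cos\sigma-y\sin\sigma\cos\sigma+x\sin^2\sigma=x.
\]
Symmetrically, the coefficient of $B$ is $y$. Hence
\[
F'=\Phi\bigl(1+xA(x)+yB(y)\bigr).
\]

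Finally we insert \eqref{eq:AB}. This gives $xA=p+cx^2$ and $yB=q+cy^2$, so
\[
1+xA+yB=1+p+q+c(x^2+y^2)=n+c\rho^2,
\]
which is \eqref{eq:F}. The identity $\langle\gamma,\gamma'\rangle=\rho\rho'$ is just $\rho'=\cos w$ from Lemma~\ref{lem:polar}, or directly $(\rho^2/2)'=\langle\gamma,\gamma'\rangle$.

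No step is a genuine obstacle. The only point needing care is the cancellation of the mixed terms $\sin\sigma\cos\sigma$, which is what makes the result independent of $\sigma$. The computation uses nothing about the sign of $c$, consistent with the statement holding for every $c\in\R$.
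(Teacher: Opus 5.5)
Your proposal is correct and follows the same route as the paper's proof: the product rule with $\Phi'=\Phi(A\cos\sigma+B\sin\sigma)$ and $\langle\gamma,\gamma'\rangle'=1+\sigma'(y\cos\sigma-x\sin\sigma)$, substitution of the third equation of \eqref{eq:flow}, and the cancellation of the mixed terms to get $F'/\Phi=1+xA+yB=n+c\rho^{2}$. The only difference is that you write out the coefficients of $A$ and $B$ explicitly, which the paper leaves to the reader.
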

\begin{proof}
From $\Phi'=\Phi(A\cos\sigma+B\sin\sigma)$, $\frac{d}{ds}(x\cos\sigma+y\sin\sigma)=1+\sigma'(-x\sin\sigma+y\cos\sigma)$ and \eqref{eq:flow},
\[
\begin{aligned}
\frac{F'}{\Phi}&=(A\cos\sigma+B\sin\sigma)(x\cos\sigma+y\sin\sigma)\\
&\quad+1+(-A\sin\sigma+B\cos\sigma)(-x\sin\sigma+y\cos\sigma)\\
&=1+Ax+By=1+p+q+c\rho^{2}.
\end{aligned}
\]
\end{proof}

Since $\Phi>0$ and $n+c\rho^{2}>0$ for $c>0$, the quantity $F$ is strictly increasing along every solution. As $F$ has the sign of $\rho'$, this controls the distance to the origin.

\begin{corollary}[Unimodality of $\rho$]\label{cor:unimodal}
Let $c>0$. Along every solution of \eqref{eq:flow}, $\rho'$ changes sign at most once, from $-$ to $+$. Hence $\rho$ has at most one critical point, which is a strict global minimum, and at that point $(\rho^{2}/2)''=n+c\rho^{2}>0$.
\end{corollary}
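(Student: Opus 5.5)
The plan is to read off everything from the monotone quantity $F=\Phi\rho\rho'$ of Lemma~\ref{lem:terceira}. Since $c>0$, Lemma~\ref{lem:positivo} gives $\Phi=f(x)g(y)>0$ on $\QQ$, and $n+c\rho^{2}>0$. Hence \eqref{eq:F} shows that $F'>0$ everywhere along the solution, so $F$ is strictly increasing on the maximal interval of definition.

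Because $\Phi\rho>0$, the sign of $F$ equals the sign of $\rho'$. A strictly increasing function vanishes at most once, and if it vanishes it passes from negative to positive. Therefore $\rho'$ changes sign at most once, from $-$ to $+$. Moreover, $\rho'$ has at most one zero at all: any zero of $\rho'$ is a zero of $F$, and $F$ has at most one zero.

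Next I identify the critical point. If $\rho'(s_0)=0$, then $\rho'<0$ for $s<s_0$ and $\rho'>0$ for $s>s_0$. So $\rho$ is strictly decreasing before $s_0$ and strictly increasing after it, which makes $\rho(s_0)$ a strict global minimum along the curve.

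For the second-derivative identity, write $(\rho^{2}/2)'=\rho\rho'=F/\Phi$. Differentiating gives
\[
(\rho^{2}/2)''=\frac{F'}{\Phi}-\frac{F\,\Phi'}{\Phi^{2}}.
\]
At $s_0$ we have $F=0$, so only the first term survives, and \eqref{eq:F} gives $(\rho^{2}/2)''=n+c\rho^{2}>0$ there. Alternatively, one can compute directly: $(\rho^2/2)''=1+\langle\gamma,\gamma''\rangle$, but the route through $F$ is immediate.

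There is no real obstacle here. The only point to be careful about is that strict monotonicity of $F$ rules out $\rho'$ vanishing on an interval or at two separate points, and this is exactly what the pointwise positivity $F'>0$ provides.
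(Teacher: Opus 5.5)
Your proposal is correct and follows essentially the same route as the paper: strict monotonicity of $F=\Phi\rho\rho'$ from \eqref{eq:F}, the identity $\sgn F=\sgn\rho'$, and the observation that at a zero of $F$ one has $(\rho^{2}/2)''=F'/\Phi=n+c\rho^{2}$. Your explicit quotient-rule expansion and the strict monotonicity of $\rho$ on either side of $s_0$ only spell out the steps the paper leaves implicit.
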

\begin{proof}
By \eqref{eq:F}, $F$ is strictly increasing, and $\sgn F=\sgn\rho'$. At a critical point of $\rho$ we have $F=0$ and $F'=\Phi(\rho\rho')'$, so that $(\rho^{2}/2)''=F'/\Phi=n+c\rho^{2}$.
\end{proof}

\begin{remark}\label{rem:laplaciano}
Corollary~\ref{cor:unimodal} is the equivariant form of the identity $\Delta_\Sigma\frac{|X|^{2}}2=n+c|X^{\perp}|^{2}$, valid on every self-expander: $|X|^{2}$ has no local maximum on $\Sigma$.
\end{remark}

The angular part of $F$ is monotone as well, and it is what confines the polar angle. Set
\[
h(\psi):=\cos^{p}\psi\sin^{q}\psi,\qquad \psi\in(0,\tfrac\pi2),
\]
so that $\Phi=\rho^{p+q}h(\psi)e^{c\rho^{2}/2}$ and, by \eqref{eq:polar}, $F=\rho^{n}e^{c\rho^{2}/2} h(\psi)\cos w$. Since $(\log h)'=q\cot\psi-p\tan\psi$, the function $h$ is positive on $(0,\frac\pi2)$, tends to $0$ at both ends, and attains its maximum exactly at the Clifford angle $\psi_0$.

\begin{lemma}[The angular quantity]\label{lem:L}
Let $c>0$ and set $L:=h(\psi)\cos w$ along a solution of \eqref{eq:flow}. Then
\begin{equation}\label{eq:L}
\rho\frac{dL}{ds}=(n+c\rho^{2}) h(\psi)\sin^{2}w\ge0 ,
\end{equation}
so $L$ is nondecreasing, and strictly increasing except at the instants where the tangent is radial.
\end{lemma}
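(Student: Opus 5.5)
We differentiate $L=h(\psi)\cos w$ directly along the flow, using the polar form \eqref{eq:polar}. We have $w=\psi-\sigma$, so $w'=\psi'-\sigma'$, and $(\log h)'(\psi)=q\cot\psi-p\tan\psi$. The product rule gives
\[
L'=h(\psi)\Bigl[(q\cot\psi-p\tan\psi)\,\psi'\cos w-\sin w\,(\psi'-\sigma')\Bigr].
\]
We substitute $\psi'=-\sin w/\rho$ and $\sigma'=c\rho\sin w+E/\rho$, and multiply by $\rho$:
\[
\rho L'=h(\psi)\sin w\Bigl[-(q\cot\psi-p\tan\psi)\cos w+\sin w+c\rho^{2}\sin w+E(\psi,\sigma)\Bigr].
\]
The claim reduces to the trigonometric identity
\[
E(\psi,\sigma)-(q\cot\psi-p\tan\psi)\cos w=(p+q)\sin w .
\]
Once it holds, the bracket becomes $(1+p+q+c\rho^{2})\sin w=(n+c\rho^{2})\sin w$, which is \eqref{eq:L}.

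To check the identity, we write $\sigma=\psi-w$ and expand
\[
\cos\sigma=\cos\psi\cos w+\sin\psi\sin w,\qquad \sin\sigma=\sin\psi\cos w-\cos\psi\sin w.
\]
Then
\[
\frac{q\cos\sigma}{\sin\psi}=q\cot\psi\cos w+q\sin w,\qquad \frac{p\sin\sigma}{\cos\psi}=p\tan\psi\cos w-p\sin w.
\]
Subtracting the second from the first gives $E=(q\cot\psi-p\tan\psi)\cos w+(p+q)\sin w$, as required.

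An alternative check avoids the trigonometry. Since $F=\rho^{n}e^{c\rho^{2}/2}L$, Lemma~\ref{lem:terceira} gives
\[
F'=\rho^{n}e^{c\rho^{2}/2}\Bigl[L'+\Bigl(\tfrac n\rho+c\rho\Bigr)\rho' L\Bigr]=\Phi(n+c\rho^{2}),
\]
with $\Phi=\rho^{n-1}e^{c\rho^{2}/2}h$. Using $\rho'=\cos w$, this becomes
\[
\rho L'=(n+c\rho^{2})\,h\,(1-\cos^{2}w)=(n+c\rho^{2})\,h\sin^{2}w,
\]
which again is \eqref{eq:L}. This route is cleaner, and I would present it as the main proof.

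For the conclusions, $h>0$ on $(0,\tfrac\pi2)$, $\rho>0$ and $n+c\rho^{2}>0$ because $c>0$, so $L'\ge0$. Equality holds exactly where $\sin w=0$, that is, where the tangent is radial. Strict increase needs these instants to be isolated, or else the curve to be the ray $\psi\equiv\text{const}$ with $\sin w\equiv0$, which is the Clifford ray. At a zero of $\sin w$ we have $w'=\psi'-\sigma'=-E/\rho$. If $E\neq0$ there, the zero is transversal and hence isolated. If $E=0$ together with $\sin w=0$, the identity forces $\psi=\psi_0$, and uniqueness of solutions then makes the trajectory the Clifford ray. Away from the Clifford ray, therefore, a nondecreasing function whose derivative vanishes only at isolated instants is strictly increasing. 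The only genuine computation is the identity above, and even that is bypassed by the route through $F$.
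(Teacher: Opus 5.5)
Your proof is correct, and the route you propose to present, through $F=\rho^{n}e^{c\rho^{2}/2}L$ and Lemma~\ref{lem:terceira}, is exactly the paper's argument. Your direct computation is also right: it rests on the identity $E=(q\cot\psi-p\tan\psi)\cos w+(p+q)\sin w$, which the paper itself uses later to obtain \eqref{eq:wexata}. Your further step, showing that the zeros of $\sin w$ are isolated unless the trajectory is the Clifford ray, so that $L$ is strictly increasing as a function on every other trajectory, is a valid refinement the paper leaves implicit.
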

\begin{proof}
Write $F=\rho^{n}e^{c\rho^{2}/2}L$ and differentiate, using $\rho'=\cos w$:
\[
\rho F'=(n+c\rho^{2})\cos w \rho^{n}e^{c\rho^{2}/2}L+\rho^{n}e^{c\rho^{2}/2} \rho L' .
\]
On the other hand $\rho F'=(n+c\rho^{2})\rho\Phi=(n+c\rho^{2})\rho^{n}e^{c\rho^{2}/2}h$ by \eqref{eq:F}. Hence $\rho L'=(n+c\rho^{2})(h-L\cos w)=(n+c\rho^{2})h\sin^{2}w$.
\end{proof}

\begin{corollary}[Confinement of the polar angle]\label{cor:confina}
Let $c>0$ and let $\gamma$ be a solution of \eqref{eq:flow}. If $L(s_0)>0$ for some $s_0$, then
\[
h(\psi(s))\ge L(s)\ge L(s_0)>0\qquad\text{for every }s\ge s_0 ,
\]
and $\psi$ stays in a compact subinterval of $(0,\frac\pi2)$ on $[s_0,\infty)$. Symmetrically, if $L(s_0)<0$, then $h(\psi(s))\ge|L(s_0)|$ for every $s\le s_0$.
\end{corollary}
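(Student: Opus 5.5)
The argument is a direct consequence of Lemma~\ref{lem:L}, combined with the elementary inequality $|\cos w|\le 1$. The plan is to first obtain the chain of inequalities, and then convert the lower bound on $h$ into confinement of $\psi$ using the shape of $h$.

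For the forward statement, assume $L(s_0)>0$. By \eqref{eq:L}, $L$ is nondecreasing along the solution, so $L(s)\ge L(s_0)$ for every $s\ge s_0$ in the maximal interval. Since $h(\psi)>0$ on $(0,\frac\pi2)$ and $\cos w\le 1$, we also have
\[
L=h(\psi)\cos w\le h(\psi).
\]
Together these give $h(\psi(s))\ge L(s)\ge L(s_0)>0$. Next, $h$ is continuous and positive on $(0,\frac\pi2)$, tends to $0$ at both endpoints, and increases up to $\psi_0$ then decreases after it (because $(\log h)'=q\cot\psi-p\tan\psi$ is strictly decreasing). Therefore the superlevel set $\{h\ge L(s_0)\}$ is a compact interval $[\psi_-,\psi_+]\subset(0,\frac\pi2)$. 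Since $\psi(s)$ belongs to this set for all $s\ge s_0$, it stays in $[\psi_-,\psi_+]$.

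For the backward statement, assume $L(s_0)<0$. Monotonicity now gives $L(s)\le L(s_0)<0$ for $s\le s_0$, so $|L(s)|\ge|L(s_0)|$. Since $|\cos w|\le1$, we have $h(\psi)\ge|L|$, and hence $h(\psi(s))\ge|L(s_0)|$; the same compactness conclusion follows. One could instead deduce this case from the forward one through the orientation reversal of Lemma~\ref{lem:simetrias}(i), which sends $w\mapsto w-\pi$ and so flips the sign of $L$, but the direct argument is equally short.

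No step presents a real obstacle. The only points needing care are that positivity of $h$ is used to get $L\le h$, and that the unimodality of $h$ is what turns the level bound into a compact interval.
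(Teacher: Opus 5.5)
Your proof is correct and follows the paper's argument essentially verbatim: monotonicity of $L$ from Lemma~\ref{lem:L}, the bound $|L|\le h(\psi)|\cos w|\le h(\psi)$, and compactness of the superlevel set $\{h\ge L(s_0)\}$ because $h>0$ on $(0,\frac\pi2)$ and $h$ vanishes at both endpoints. Your use of the unimodality of $h$ to show that this set is an interval is a small refinement the paper leaves implicit. It is not strictly needed, since any compact subset of $(0,\frac\pi2)$ lies in a compact subinterval.
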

\begin{proof}
By Lemma~\ref{lem:L}, $L(s)\ge L(s_0)$ for $s\ge s_0$, and $h(\psi)\ge h(\psi)\cos w=L$. Since $h$ is continuous and positive on $(0,\frac\pi2)$ and vanishes at the endpoints, the set $\{h\ge L(s_0)\}$ is a compact subinterval of $(0,\frac\pi2)$. If $L(s_0)<0$ and $s\le s_0$, then $L(s)\le L(s_0)$, hence $h(\psi)|\cos w|\ge|L(s_0)|$ and again $h(\psi)\ge|L(s_0)|$.
\end{proof}

\section{Endpoints, completeness and classification}\label{sec:endpoints}

\subsection{The three kinds of endpoint}\label{sub:endpoints}

The system \eqref{eq:flow} is smooth on $M$, so every solution extends uniquely to a maximal one, defined on an open interval $(s_-,s_+)$ with $-\infty\le s_-<s_+\le\infty$. We call $\alpha$ and $\omega$ the two endpoints of the trajectory, at $s_-$ and at $s_+$, respectively. They refer to the ends of the parameter interval, not to points of $M$ or of the plane. Thus a limit at $\omega$ is a limit as $s\to s_+$, and a property holds near $\omega$ if it holds on some interval $(s_1,s_+)$, and similarly for $\alpha$. When the profile curve converges as $s\to s_+$, we also denote its limit point by $\omega$, and similarly for $\alpha$. The reversal of Lemma~\ref{lem:simetrias}(i) exchanges $\alpha$ and $\omega$. The notation is borrowed from the $\alpha$- and $\omega$-limit sets of dynamical systems, which describe the same two directions of time.

We treat the endpoint $\omega$. The statements for $\alpha$ follow by the reversal.

\begin{proposition}[Finite and infinite endpoints]\label{prop:endpoints}
Let $c>0$. Exactly one of the following holds:
\begin{enumerate}
\item[(i)] $s_+<\infty$, and $\gamma(s)$ converges, as $s\to s_+$, to a point of $\partial\QQ$.
\item[(ii)] $s_+=\infty$ and $\rho\to\infty$. More precisely, $x$ and $y$ are eventually monotone and at least one of them tends to $\infty$.
\end{enumerate}
In particular, there is no end of infinite length along which $\gamma$ stays bounded.
\end{proposition}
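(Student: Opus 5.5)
By Corollary~\ref{cor:mono}, each of $\cos\sigma$ and $\sin\sigma$ vanishes at most once, so $x$ and $y$ are monotone on some interval $(s_1,s_+)$. Hence $x(s)$ and $y(s)$ have limits $x_+,y_+\in[0,\infty]$ as $s\to s_+$. The proof is a case analysis on whether both limits are finite.

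\emph{Case $s_+<\infty$.} Since $|\gamma'|=1$, the curve is Lipschitz, so $\gamma(s)$ converges to a point $(x_+,y_+)$ of the closed quadrant. This point cannot lie in the open quadrant $\QQ$. There $A$ and $B$ are bounded near the limit, so $|\sigma'|\le A+B$ is bounded and $\sigma$ converges modulo $2\pi$. The solution would then extend past $s_+$, contradicting maximality. Hence the limit lies in $\partial\QQ$, which gives (i).

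\emph{Case $s_+=\infty$.} We must show that $x$ and $y$ do not both stay bounded. Since they are eventually monotone, this will give $\rho\to\infty$, with at least one coordinate tending to $\infty$.

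Suppose both are bounded. Then $\gamma(s)\to(x_+,y_+)$ along a curve of infinite length, and eventual monotonicity of $x$ and $y$ makes the total variation of each coordinate finite. Because $\gamma$ has unit speed, $|x'|+|y'|\ge1$, so
\[
\int_{s_1}^\infty(|x'|+|y'|)\,ds=\infty.
\]
This contradicts the finite variation $|x_+-x(s_1)|+|y_+-y(s_1)|<\infty$. So both coordinates cannot stay bounded.

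The same computation shows more: a monotone bounded coordinate has finite variation, so it cannot absorb infinite length. Hence an infinite end forces at least one of $x,y$ to be unbounded, and since it is monotone, it tends to $\infty$. Then $\rho\ge\max(x,y)\to\infty$, which gives (ii). This also proves the final assertion: no end of infinite length stays bounded.

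\emph{Exclusivity.} The two alternatives are disjoint because (i) has $s_+<\infty$ and (ii) has $s_+=\infty$.

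The step needing most care is the first case, when the limit point might lie in $\QQ$ itself. There I will verify that $\sigma$ has a limit, using the bounded derivative on a compact neighbourhood, and then invoke the standard extension theorem for ODEs.

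Note also that in the case $s_+<\infty$, monotonicity is not needed to get convergence: finite length together with unit speed gives a Cauchy limit directly.
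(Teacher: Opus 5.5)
Your proof is correct and uses essentially the paper's argument: eventual monotonicity of $x$ and $y$ from Corollary~\ref{cor:mono}, the unit-speed bound $|x'|+|y'|\ge1$ against the finite total variation of bounded monotone coordinates, convergence of the $1$-Lipschitz curve when $s_+<\infty$, and the fact that a maximal solution cannot converge to an interior point of $M$ in finite time. The differences are cosmetic: you split cases on whether $s_+$ is finite rather than on whether $x,y$ are bounded, and you prove the extension step directly (bounded $\sigma'$, so $\sigma$ converges) instead of citing the escape-from-compact-sets theorem.
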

\begin{proof}
By Corollary~\ref{cor:mono}, $x$ and $y$ are monotone on some interval $(s_1,s_+)$. If both are bounded there, they converge, and since $|x'|+|y'|\ge1$, the length of $\gamma|_{(s_1,s_+)}$ is at most the sum of their total variations, hence $s_+<\infty$. In that case, since $|\gamma'|=1$,
\[
|\gamma(s)-\gamma(r)|=\left|\int_r^{s}\gamma'(t) dt\right|\le|s-r|\qquad\text{for }r,s\in(s_1,s_+),
\]
so $\gamma$ is $1$-Lipschitz on a bounded interval, hence uniformly continuous, and extends continuously to $s_+$, converging to some $P\in\overline{\QQ}$, and $P\notin\QQ$, because a maximal solution with $s_+<\infty$ leaves every compact subset of $M$ \cite[Ch.~II, \S3]{Hartman}. If one of them is unbounded, it tends to $\infty$ monotonically, and $s_+=\infty$ because the speed is $1$.
\end{proof}

The next result describes the behavior at a finite endpoint. It is proved in \cite[Propositions~2.16, 6.1 and~6.4]{SN1} for every $c\ne0$.

\begin{proposition}[Behavior at a finite endpoint]\label{prop:sn1}
Let $c\ne0$ and let $\gamma$ be a maximal solution of \eqref{eq:flow} whose profile curve converges, as $s\to s_+$ or $s\to s_-$, to a point $P\in\partial\QQ$.
\begin{enumerate}
\item[(i)] If $P=(x_0,0)$ with $x_0>0$, then the curve meets the axis orthogonally: $\sin\sigma\to-1$ at $\omega$ and $\sin\sigma\to+1$ at $\alpha$, with $I\to\mp f(x_0)$. Symmetrically if $P=(0,y_0)$ with $y_0>0$.
\item[(ii)] In that case $\Sigma$ closes off smoothly over $\s^{p}(x_0)\times\{0\}$, which is a regular interior subset of $\Sigma$.
\item[(iii)] If $P$ is the origin, then $\gamma$ is the Clifford ray.
\end{enumerate}
\end{proposition}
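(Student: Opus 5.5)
The plan is to treat the three items separately, working near $\omega$; the statements at $\alpha$ then follow from the reversal of Lemma~\ref{lem:simetrias}(i), and the case $P=(0,y_0)$ follows from the exchange in Lemma~\ref{lem:simetrias}(ii). For (i), take $P=(x_0,0)$. By Corollary~\ref{cor:mono}, $\sin\sigma$ vanishes at most once, so near $\omega$ we have $\sin\sigma<0$ (since $y\downarrow0$), and we may use $y$ as a parameter. Lemma~\ref{lem:IJ} gives $dJ/dy=J'/y'=A(x)g(y)\sin\sigma$. Near $P$, $A(x)$ is bounded because $x\to x_0>0$; this holds for any $c\ne0$. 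Since $g(y)\to0$ and $\cos\sigma$ is bounded, $J\to0$ at $\omega$. Integrating from $0$ then gives
\[
|J(y)|\le C\int_0^y t^{q}\,dt\le C'y^{q+1}.
\]
Dividing by $g(y)\sim y^{q}$ yields $|\cos\sigma|\le C''y\to0$, hence $\sin\sigma\to-1$ and $I=f(x)\sin\sigma\to-f(x_0)$. At $\alpha$ the same argument applies with $y$ increasing, which gives $\sin\sigma\to+1$. As a by-product we get the quantitative estimate $\cos\sigma=O(y)$, which is used in (ii).

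For (ii), write the end near $P$ as a graph $x=u(y)$ with $u(0)=x_0$. By (i), $u'(y)=\cot\sigma=O(y)$, so $u'(0)=0$. The equation becomes the singular ODE
\[
\frac{u''}{1+u'^2}=A(u)-\Bigl(\frac qy+cy\Bigr)u'.
\]
I would prove existence of a solution of the form $u(y)=v(y^2)$ with $v$ smooth (or analytic), by writing the equation in integral form,
\[
y^{q}e^{cy^2/2}\,\frac{u'}{\sqrt{1+u'^2}}=\int_0^y t^{q}e^{ct^2/2}\,\frac{A(u)}{\sqrt{1+u'^2}}\,dt,
\]
which is just the identity for $J$. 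A contraction argument in a space of even functions on $[0,\delta]$ with $u(0)=x_0$ then applies. The same integral equation gives uniqueness among solutions with $u'=O(y)$, so our curve coincides with the even solution. An even smooth profile rotated by $O(q+1)$ gives a smooth hypersurface, and $\Sigma$ closes off over $\s^{p}(x_0)\times\{0\}$.

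For (iii), suppose $\gamma\to0$. Then $\Phi\sim\rho^{p+q}h(\psi)e^{c\rho^2/2}\to0$, and since $\rho\rho'$ is bounded, $F\to0$ at $\omega$. Because $F$ is increasing, $F<0$ near $\omega$; integrating \eqref{eq:F} gives
\[
-F(s)=\int_s^{s_+}\Phi\,(n+c\rho^2).
\]
The same integral identity, applied to $L$ via Lemma~\ref{lem:L}, should force $\psi$ to stay away from the axes. Otherwise the curve would approach an axis point near the origin, and a comparison with (i) gives a contradiction. Next I would blow up in the time $t=-\log\rho$. As $\rho\to0$ the $c$-term in \eqref{eq:polar} disappears and the system becomes the autonomous minimal-cone system in $(\psi,w)$. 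Its only orbit reaching the vertex is the rest point $\psi=\psi_0$, $w=0$. This gives $w\to0$ and $\psi\to\psi_0$.

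The main obstacle is the final uniqueness step: showing that such a trajectory is exactly the Clifford ray and not merely asymptotic to it. I would linearize at $(\psi_0,w=0)$ and check that both Jacobi exponents of the cone have the wrong sign (or nonzero real part) to allow a nontrivial solution decaying as $\rho\to0$. If the linearization is inconclusive, I would use the exact identity above: compare $F$ along $\gamma$ with its value along the ray, and use $h\le h(\psi_0)$ with equality only at $\psi_0$ to force $\sin w\equiv0$, hence $\psi\equiv\psi_0$.
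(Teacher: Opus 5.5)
You have a genuine gap in (iii), at a step you pass over quickly: the claim that $\psi$ stays in a compact subinterval of $(0,\frac\pi2)$ as $\gamma\to0$. (For comparison, the paper does not prove this proposition at all; it quotes it from \cite{SN1}.)

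\textbf{Why the confinement is not established.} Lemma~\ref{lem:L} does not give it. Near $\omega$ you have $F<0$, hence $L<0$, and $L$ increases to a limit $L_\infty\le0$. Corollary~\ref{cor:confina} confines $\psi$ only \emph{backward} from a point where $L<0$. Toward $\omega$ the bound $h(\psi)\ge|L(s)|$ degenerates exactly when $L_\infty=0$, which is the case in which $\psi$ escapes to an axis. A comparison with (i) is not available either: (i) concerns points $(x_0,0)$ with $x_0>0$, and its estimate $|\cos\sigma|\lesssim A(x_0)\,y$ blows up as $x_0\to0$.

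Without confinement the blow-up has no compact $\omega$-limit set. The cylinder $(0,\frac\pi2)\times\s^1$ is not compact, and the limiting minimal system is singular at $\psi\in\{0,\frac\pi2\}$. So ``the only orbit reaching the vertex is the rest point'' is unproved.

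\textbf{The step you call the main obstacle is immediate.} One has $L=h(\psi)\cos w\ge-h(\psi_0)$ everywhere, with equality only where $\psi=\psi_0$ and $\cos w=-1$. (For the orientation toward the origin the rest point is $w=\pi$, not $w=0$.) Once $L\to-h(\psi_0)$ at $\omega$, monotonicity gives $L\equiv-h(\psi_0)$ near $\omega$, hence $\psi\equiv\psi_0$. Uniqueness for \eqref{eq:flow} then yields the Clifford ray, and no linearization is needed.

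\textbf{One way to close the gap.} For $\rho$ small, $A,B>0$ and $n+c\rho^2>0$ for every $c\ne0$. So $I$ and $J$ increase strictly near $\omega$ and tend to $0$. Hence $\sin\sigma<0$ and $\cos\sigma<0$ there, $x$ and $y$ decrease to $0$, and
\[
|\cos w|=\cos\psi\,|\cos\sigma|+\sin\psi\,|\sin\sigma|\ge\min(\cos\psi,\sin\psi).
\]
Thus $L_\infty=0$ would force $\psi\to0$ or $\psi\to\frac\pi2$.

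To exclude $\psi\to0$, use $x$ as parameter, with $v=dy/dx>0$. From $dI/dx=Bf\cos\sigma$,
\[
f(x)\ge|I(x)|=\int_0^xB(y(\xi))f(\xi)|\cos\sigma(\xi)|\,d\xi .
\]
Suppose $y(\xi)\le\varepsilon\xi$ for small $\xi$.
\begin{itemize}
\item Then $B\ge q/(2\varepsilon\xi)$ and $f(\xi)\approx\xi^p$ give $\int_{x/2}^x|\cos\sigma|\,d\xi\le2^{p+2}\varepsilon x/q$.
\item On the other hand, $\int_{x/2}^xv\,d\xi\le y(x)\le\varepsilon x$ and $|\cos\sigma|=(1+v^2)^{-1/2}\ge1-v$ give $\int_{x/2}^x|\cos\sigma|\,d\xi\ge(\frac12-\varepsilon)x$.
\end{itemize}
These contradict each other for small $\varepsilon$, and $\psi\to\frac\pi2$ is the mirror case.

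So $L_\infty<0$ and $\psi$ is confined. Now apply LaSalle's principle to the system of Remark~\ref{rem:questoes} in the variable $t=\int ds/\rho$. That system is smooth up to $\{\rho=0\}$, with $dL/dt=nh\sin^2w$ there, and $t\to\infty$ since $s_+<\infty$ and $\rho(s)\le s_+-s$. The $\omega$-limit set therefore consists of rest points with $L=L_\infty<0$, i.e.\ $L_\infty=-h(\psi_0)$, and the rigidity argument above finishes (iii).

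\textbf{Smaller points.}
\begin{itemize}
\item Your (i) and the uniqueness in (ii) are the same $J$-identity and Volterra contraction the paper uses in Proposition~\ref{ap:existencia}, and they are fine with one caveat. The statement is for all $c\ne0$, while you invoke Corollary~\ref{cor:mono} and the global monotonicity of $F$, which hold only for $c>0$. Localize as above; near the axis it suffices that $B>0$.
\item A contraction among continuous even functions does not by itself give $C^\infty$ regularity at $y=0$. Run it instead in holomorphic functions on a small complex disc (the map preserves ``$X$ even, $V$ odd'', since $G$ is then even), or bootstrap. Either way you get $X(y)=\tilde X(y^2)$ with $\tilde X$ smooth.
\end{itemize}
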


The two propositions together show that an endpoint is of exactly one of three kinds: an orthogonal endpoint on one of the two axes, the origin, which occurs only for the Clifford ray, or an endpoint at infinite arclength, along which $\rho\to\infty$. So, completeness follows:

\begin{corollary}[Completeness]\label{thm:completude}
Let $c>0$. Every hypersurface associated with a maximal solution of \eqref{eq:flow}, other than the Clifford cone, is complete. The Clifford cone is incomplete, its profile curve reaching the origin in finite arclength.
\end{corollary}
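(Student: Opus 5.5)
Completeness of the induced metric on $\Sigma$ reduces to two facts about the profile curve $\gamma$. First, $\gamma$ has infinite length in each direction in which it does not close off smoothly. Second, the orbit factor $\s^{p}(x)\times\s^{q}(y)$ never collapses except where $\Sigma$ closes off smoothly. I would first classify the two endpoints $\alpha$ and $\omega$ using Propositions~\ref{prop:endpoints} and~\ref{prop:sn1}, and then show that a divergent path in $\Sigma$ has infinite length in each case.

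By Proposition~\ref{prop:endpoints}, the endpoint $\omega$ is of one of two kinds. Either $s_+=\infty$ and $\rho\to\infty$, or $s_+<\infty$ and $\gamma(s)$ converges to a point $P\in\partial\QQ$. The endpoint $\alpha$ has the same dichotomy, by the reversal of Lemma~\ref{lem:simetrias}(i). In the second case, Proposition~\ref{prop:sn1} gives two possibilities. If $P$ is the origin, then $\gamma$ is the Clifford ray, so $\Sigma=\mathcal C$, which is excluded. Otherwise $P$ lies on an open axis, for instance $P=(x_0,0)$ with $x_0>0$. Then, by Proposition~\ref{prop:sn1}(ii), $\Sigma$ extends smoothly across $\s^{p}(x_0)\times\{0\}$, which becomes a regular interior subset of the hypersurface. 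So for $\Sigma\neq\mathcal C$, after adjoining these closing spheres, every remaining endpoint of $\gamma$ lies at infinite arclength with $\rho\to\infty$.

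To prove completeness, I would use the Hopf--Rinow characterization and show that every divergent curve in $\Sigma$ has infinite length. The key observation is that the projection $\Sigma\to\overline{\QQ}$, $X\mapsto(|u|,|v|)$, is $1$-Lipschitz, because the metric splits orthogonally as $ds^2+x^2g_{\s^p}+y^2g_{\s^q}$. A divergent curve must therefore leave every set of the form $\{X(s,\cdot,\cdot): s\in[a,b]\}$ together with the adjoined closing spheres, each of which is compact. Its image in the parameter $s$ must then tend to an endpoint at infinite arclength, since the finite-arclength endpoints have been capped off by smooth closed spheres. The length of a curve in $\Sigma$ is bounded below by the total variation of its $s$-component, which is infinite. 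This gives completeness. The same framework also gives noncompactness whenever some endpoint has $\rho\to\infty$.

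For the Clifford cone, the profile is the ray $(s\cos\psi_0,s\sin\psi_0,\psi_0)$, which reaches the origin at $s=0$, so its arclength is finite. At the origin both $x$ and $y$ tend to $0$, and the cone is singular there. Since $p,q\ge1$, the link $\s^p\times\s^q$ is not a sphere, so this is a genuine vertex. A radial segment toward the vertex is therefore a divergent curve of finite length, and the cone is incomplete.

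The main obstacle is the capping step. One must make sure that the points added over a finite axis endpoint form a compact piece of the hypersurface, and that no curve can escape through them. This is exactly what Proposition~\ref{prop:sn1}(ii) provides. The rest is the routine observation that the arclength coordinate $s$ is distance-nonincreasing.
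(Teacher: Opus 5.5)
Your proposal is correct and follows the paper's route. The paper's proof is just the endpoint trichotomy from Propositions~\ref{prop:endpoints} and~\ref{prop:sn1}: each endpoint is at infinite arclength, or is an orthogonal endpoint on an axis over which $\Sigma$ closes off smoothly, or is the origin, which only the Clifford ray reaches. Your Hopf--Rinow argument, using that the arclength coordinate $s$ is $1$-Lipschitz for the metric $ds^2+x^2g_{\s^p}+y^2g_{\s^q}$, spells out the step from this trichotomy to completeness that the paper leaves implicit.
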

\begin{proof}
By Proposition~\ref{prop:endpoints} and Proposition~\ref{prop:sn1}, each endpoint is at infinite arclength, an orthogonal endpoint on an axis, over which $\Sigma$ closes off smoothly, or the origin, which is reached only by the Clifford ray.
\end{proof}

\subsection{Classification}\label{sec:classif}

We first describe the curves with an endpoint on an axis, which are the simplest ones: along them the tangent angle never leaves the first quadrant.

\begin{proposition}[Curves leaving an axis]\label{prop:cusp}
Let $c>0$ and suppose that the endpoint $\alpha$ of $\gamma$ lies on the $x$-axis, at $(x_0,0)$. Then 
\begin{itemize}
    \item[(i)] $\sigma\in(0,\tfrac\pi2)$ along the whole curve;
    \item[(ii)] $x$, $y$ and $\rho$ are strictly increasing;
    \item[(iii)] the endpoint $\omega$ satisfies $\rho\to\infty$;
    \item[(iv)] and $\gamma$ is simple.
\end{itemize}
    The same holds, mutatis mutandis, for an endpoint on the $y$-axis, and when the endpoint on the axis is $\omega$ the corresponding statements hold after reversing the orientation, that is, with all the monotonicity assertions read as one moves away from the axis.
\end{proposition}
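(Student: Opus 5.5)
The plan is to read off the signs of $\sin\sigma$ and $\cos\sigma$ from the boundary values of $I$ and $J$ at $\alpha$, and then propagate those signs along the whole curve using the strict monotonicity of Corollary~\ref{cor:mono}.

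\emph{Boundary values at $\alpha$.} By Proposition~\ref{prop:sn1}(i), at $\alpha$ we have $\sin\sigma\to+1$, so $I\to f(x_0)>0$. Since $y\to0$, we have $g(y)\to0$ by Lemma~\ref{lem:positivo}, and $|\cos\sigma|\le1$, so $J\to0$.

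\emph{Signs along the curve.} Since $J$ is strictly increasing (Corollary~\ref{cor:mono}) and tends to $0$ at $\alpha$, we get $J>0$ on the whole interval $(s_-,s_+)$. Hence $\cos\sigma>0$ everywhere, because $\sgn J=\sgn\cos\sigma$. Similarly, $I$ is strictly increasing with limit $f(x_0)>0$ at $\alpha$, so $I>f(x_0)>0$ everywhere, and therefore $\sin\sigma>0$. Together these give $\sigma\in(0,\tfrac\pi2)$ modulo $2\pi$, which is (i). Then $x'=\cos\sigma>0$ and $y'=\sin\sigma>0$, so $x$ and $y$ are strictly increasing. For $\rho$, note that $\rho\rho'=x\cos\sigma+y\sin\sigma>0$ on $\QQ$, since all four factors are positive. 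This gives (ii).

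\emph{The endpoint $\omega$.} By Proposition~\ref{prop:endpoints}, $\omega$ is either a finite point of $\partial\QQ$ or satisfies $\rho\to\infty$. A finite limit on $\partial\QQ$ is impossible: since $x$ and $y$ are increasing, we have $x>x_0>0$ and $y>y(s_1)>0$ for $s>s_1$. So the limit point lies in $\QQ$, which is excluded. Hence $\rho\to\infty$, giving (iii).

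\emph{Simplicity.} A curve with $x$ strictly increasing is the graph of a function over the $x$-axis, so $\gamma$ is injective. This is (iv).

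\emph{Symmetric cases.} The case of an endpoint on the $y$-axis follows from the exchange symmetry of Lemma~\ref{lem:simetrias}(ii), which maps $\sigma\in(0,\tfrac\pi2)$ to itself. The case where the endpoint on the axis is $\omega$ follows from the reversal of Lemma~\ref{lem:simetrias}(i).

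The only delicate point is the boundary limits at $\alpha$. Here I rely on Proposition~\ref{prop:sn1}(i) to give $\sin\sigma\to+1$ and on $g(0^+)=0$ to give $J\to0$. Both are available, so I expect no real obstacle.
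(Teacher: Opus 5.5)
Your proof is correct and follows the paper's argument essentially line by line. You take the limits $I\to f(x_0)>0$ and $J\to0$ at $\alpha$ from Proposition~\ref{prop:sn1}(i) and $g(0^+)=0$, propagate the signs using the strict monotonicity of Corollary~\ref{cor:mono}, and then use Proposition~\ref{prop:endpoints} for (iii), the injectivity of $x$ for (iv), and Lemma~\ref{lem:simetrias} for the remaining cases, all exactly as the paper does.
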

\begin{proof}
At $\alpha$, $I\to f(x_0)>0$ by Proposition~\ref{prop:sn1}(i) and $J\to0$, since $|J|\le g(y)\to0$. By Corollary~\ref{cor:mono}, $I>f(x_0)>0$ and $J>0$ along the curve, so $\sin\sigma>0$ and $\cos\sigma>0$. Hence $x$ and $y$ are strictly increasing, and so is $\rho$, since $\rho\rho'=x\cos\sigma+y\sin\sigma>0$. The endpoint $\omega$ is neither on an axis nor at the origin, since $x$ and $y$ increase, and Proposition~\ref{prop:endpoints} gives $\rho\to\infty$ there. Since $x$ is strictly increasing, $\gamma$ is injective. When the endpoint on the axis is $\omega$ we apply reversibility, and for the $y$-axis, the mirror symmetry of Lemma~\ref{lem:simetrias}(ii).
\end{proof}

\begin{remark}\label{rem:nocompact}
There is no compact self-expander, since by the identity of Remark~\ref{rem:laplaciano} the function $|X|^{2}$ would attain a maximum. In the bi-rotational case this also follows from Corollary~\ref{cor:mono}, since $I$ is strictly increasing and hence excludes closed orbits, and from Proposition~\ref{prop:cusp}, which excludes profile curves with both endpoints on the axes.
\end{remark}

The curves of Proposition~\ref{prop:cusp} are determined by their endpoint on the axis. This does not follow from the standard existence and uniqueness theorem, since the coefficient $B(y)=q/y+cy$ of \eqref{eq:flow} is singular at $y=0$. Near the axis we write the curve as a graph over the $y$-axis, and in divergence form the equation becomes a fixed-point problem in which the singular term is harmless.

\begin{proposition}[Existence and uniqueness at an axis]\label{ap:existencia}
Let $c\ne0$ and $x_0>0$. Up to a translation of the parameter, there is exactly one maximal solution of \eqref{eq:flow} whose endpoint $\alpha$ is $(x_0,0)$. Near that endpoint its profile curve is a graph $x=X(y)$, $0<y\le\delta$, with $X(y)\to x_0$ and $X'(y)=O(y)$. The mirror statement holds on the $y$-axis, with $p$ in place of $q$.
\end{proposition}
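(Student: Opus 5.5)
We write the profile curve near the $x$-axis as a graph $x=X(y)$ over the $y$-axis. This is legitimate, since by Proposition~\ref{prop:sn1}(i) the curve leaves the axis orthogonally with $\sin\sigma\to1$ at $\alpha$. For a graph $x=X(y)$ traversed with $y$ increasing we have $\cos\sigma=X'/\sqrt{1+X'^2}$ and $\sin\sigma=1/\sqrt{1+X'^2}$. The third equation of \eqref{eq:flow} then becomes the second-order ODE
\[
\frac{X''}{1+X'^{2}}=\Bigl(\frac qy+cy\Bigr)X'-\Bigl(\frac pX+cX\Bigr),
\]
whose coefficient $q/y$ is singular at $y=0$.

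The key step is to put this equation in divergence form using the weight $g(y)=y^{q}e^{cy^{2}/2}$, in the spirit of the quantity $J$ of Lemma~\ref{lem:IJ}. Set $u=X'/\sqrt{1+X'^2}=\cos\sigma$. Then $J=g\cos\sigma=g u$, and along the curve
\[
\frac{dJ}{dy}=\frac{J'}{y'}=A(X)\,g(y)\sin\sigma .
\]
Since $J\to0$ at the axis, integrating gives
\[
g(y)\,u(y)=\int_0^{y}A(X(t))\,g(t)\sqrt{1-u(t)^2}\,dt .
\]
Together with $X(y)=x_0+\int_0^{y}u/\sqrt{1-u^2}\,dt$, this is a fixed-point problem $(X,u)=T(X,u)$ on $C([0,\delta])^2$. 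The domain is the closed set $\{|X-x_0|\le x_0/2,\ |u|\le1/2\}$.

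The singular term is harmless because $\frac1{g(y)}\int_0^y g(t)\,dt\le y/(q+1)$, up to the bounded factor $e^{c\delta^2/2}$ (or its reciprocal when $c<0$). This holds since $t^q\le y^q$ and the exponential factors are comparable on $[0,\delta]$. Hence the map $u\mapsto \frac1g\int_0^y A g\sqrt{1-u^2}$ sends the ball into functions bounded by $Cy$, and it is Lipschitz with constant $O(\delta)$ in the sup norm. Here we use that $A$ is smooth and Lipschitz on $[x_0/2,3x_0/2]$ and that $\sqrt{1-u^2}$ and $u/\sqrt{1-u^2}$ are Lipschitz on $|u|\le1/2$. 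For $\delta$ small, $T$ is a contraction and Banach's theorem gives a unique continuous solution. Differentiating the integral identities shows that $X$ is $C^2$ on $(0,\delta]$ and solves the ODE. The bound $|u|\le Cy$ gives $X'(y)=O(y)$ and $X(y)\to x_0$. Away from $y=0$ the system is regular, so the solution extends uniquely to a maximal solution of \eqref{eq:flow}; this is existence.

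For uniqueness, take any maximal solution with endpoint $\alpha$ at $(x_0,0)$. By Proposition~\ref{prop:sn1}(i), $\sin\sigma\to1$, so near $\alpha$ the curve is a graph $x=X(y)$ with $X\to x_0$ and $u=\cos\sigma\to0$. Since $g\to0$, $J\to0$, and integrating $dJ/dy$ shows that $(X,u)$ satisfies the same integral system. On a small enough $[0,\delta]$ it lies in the contraction domain, so it coincides with the fixed point. Standard uniqueness for \eqref{eq:flow} at interior points then identifies the two maximal solutions up to a shift of $s$.

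The main obstacle is verifying the a priori information needed to place an arbitrary solution in the fixed-point setting, namely that $J\to0$ and the curve is eventually a graph with small slope. Proposition~\ref{prop:sn1}(i) supplies this. One must also check that the contraction estimate survives the singular factor $1/g$; the bound $\frac1g\int_0^yg\lesssim y$ handles it, for either sign of $c$. The mirror statement on the $y$-axis follows from Lemma~\ref{lem:simetrias}(ii), which exchanges $p$ and $q$.
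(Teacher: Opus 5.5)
Your proof is correct, and it follows the same overall scheme as the paper. Both set up a Volterra fixed-point problem near the axis, solve it by contraction, and obtain uniqueness by showing that any solution ending at $(x_0,0)$ is a fixed point on a shorter interval. Your choice of divergence form is different, though, and somewhat better.

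The paper takes the slope $V=\cot\sigma$ with weight $y^{q}$. This leaves the singular term $-\frac qyV^{3}$ in $G$. To control it, the domain $E_\delta$ must impose $|V(t)|\le Kt$. The uniqueness part must then first use $J$ to derive $V=(A(x_0)/(q+1)+o(1))y$, so that an arbitrary solution is known to lie in that domain.

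You instead take $J=g\cos\sigma$ itself as the unknown, which is the exact integrating factor. Then the integrand $A(X)g\sqrt{1-u^{2}}$ is bounded, and the weight estimate $\frac1g\int_0^y g\le Cy$ gives at once both $|u|\le Cy$ and a Lipschitz constant of order $\delta$. The domain only needs $|u|\le\frac12$. Uniqueness needs nothing beyond $X\to x_0$, $\cos\sigma\to0$ and $J\to0$, all of which come directly from Proposition~\ref{prop:sn1}(i) and $|J|\le g$.

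Two small corrections.

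First, your displayed second-order ODE has the wrong sign. From $dJ/dy=A(X)g\sin\sigma$, or equivalently from $dV/dy=-\sigma'/\sin^{3}\sigma$, it should read $X''/(1+X'^{2})=A(X)-B(y)X'$. This is harmless, because your argument never uses that ODE. Still, the final verification should be that the fixed point solves \eqref{eq:flow}. That follows by reparametrizing by arclength, rewriting $dJ/dy=Ag\sin\sigma$ as $J'=Ag\sin^{2}\sigma$, comparing with $J'=Bg\sin\sigma\cos\sigma-g\sin\sigma\,\sigma'$, and dividing by $g\sin\sigma>0$.

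Second, when you shrink the interval to $[0,\delta']$ for uniqueness, state explicitly why the existence solution is still a fixed point there. Its restriction to $[0,\delta']$ remains a fixed point because $T$ is of Volterra type. Both solutions are then fixed points of the same contraction, so they coincide.
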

\begin{proof}
On an arc where $\sin\sigma>0$, the curve is a graph $x=X(y)$, and $V:=X'=\cot\sigma$ satisfies, by \eqref{eq:flow},
\[
\frac{dV}{dy}=-\frac{\sigma'}{\sin^{3}\sigma}=\left(1+V^{2}\right)\left(A(X)-B(y)V\right).
\]
Equivalently,
\begin{equation}\label{ap:eq:div}
\left(y^{q}V\right)'=y^{q}G(y,X,V),\qquad G(y,X,V):=(1+V^{2})\left(A(X)-cyV\right)-\frac qyV^{3}.
\end{equation}

\emph{Existence.} Let $a:=\max\{|A(x)|:|x-x_0|\le x_0/2\}$ and $K:=1+2a/(q+1)$. For $0<\delta\le1$, let $E_\delta$ be the set of pairs $(X,V)$ of continuous functions on $[0,\delta]$ with $|X-x_0|\le x_0/2$ and $|V(t)|\le Kt$, endowed with the norm $\|(X,V)\|:=\max\left(\sup|X|,\sup|V|\right)$. It is a closed subset of a Banach space, hence complete. Define $T(X,V):=(\tilde X,\tilde V)$ by
\[
\tilde V(y):=y^{-q}\int_0^{y}t^{q}G\left(t,X(t),V(t)\right)\,dt,\qquad \tilde X(y):=x_0+\int_0^{y}V(t)\,dt ,
\]
with $\tilde V(0):=0$. For $(X,V)\in E_\delta$ and $0<t\le\delta$ one has $|G(t,X,V)|\le a+C_1t^{2}$, with $C_1$ depending only on $p,q,c,x_0$. Hence $|\tilde V(y)|\le y(a+C_1\delta^{2})/(q+1)\le Ky$ and $|\tilde X-x_0|\le K\delta^{2}/2\le x_0/2$ for $\delta$ small, so $T(E_\delta)\subset E_\delta$. On $E_\delta$ the function $G$ is Lipschitz in $(X,V)$, with a constant $\Lambda$ independent of $\delta\le1$, because $A$ is smooth on $[x_0/2,3x_0/2]$ and the term $qV^{3}/t$ has Lipschitz constant at most $3qK^{2}t$ in $V$. Therefore
\[
\|T(X_1,V_1)-T(X_2,V_2)\|\le\left(\frac{\Lambda}{q+1}+1\right)\delta\,\|(X_1,V_1)-(X_2,V_2)\|,
\]
and $T$ is a contraction for $\delta$ small. Its fixed point $(X,V)$ satisfies $X'=V$ and \eqref{ap:eq:div} on $(0,\delta]$, with $X\in C^{2}((0,\delta])$. Defining $\sigma\in(0,\pi)$ by $\cot\sigma=V$ and parametrizing the graph by arclength, we obtain a solution of \eqref{eq:flow}, which we extend to a maximal one. Since $X\to x_0$ as $y\to0$, its endpoint $\alpha$ is $(x_0,0)$.

\emph{Uniqueness.} Let $\gamma$ be a maximal solution whose endpoint $\alpha$ is $(x_0,0)$. By Proposition~\ref{prop:sn1}(i), $\sin\sigma\to1$ at $\alpha$, so near $\alpha$ the curve is a graph $x=X(y)$ with $V=\cot\sigma$. By Lemma~\ref{lem:IJ}, $J'=A(x)g(y)\sin^{2}\sigma$, and dividing by $y'=\sin\sigma$ gives $dJ/dy=A(X)g(y)\sin\sigma$. Since $|J|\le g(y)\to0$ at $\alpha$, $A(X(t))\sin\sigma(t)\to A(x_0)$ and $g(t)=t^{q}(1+O(t^{2}))$,
\[
J(y)=\int_0^{y}A(X(t))g(t)\sin\sigma(t)\,dt=\left(\frac{A(x_0)}{q+1}+o(1)\right)y^{q+1},
\]
hence $\cos\sigma=J/g=\left(A(x_0)/(q+1)+o(1)\right)y$ and $V=\cot\sigma=\left(A(x_0)/(q+1)+o(1)\right)y$. Hence $y^{q}V(y)\to0$ and $t^{q}G(t,X,V)$ is bounded near $0$. Integrating \eqref{ap:eq:div} from $\varepsilon$ to $y$ and letting $\varepsilon\to0$, we see that $(X,V)$ is a fixed point of $T$. Since $|A(x_0)|/(q+1)<K$, the pair belongs to $E_{\delta'}$ for some $\delta'\le\delta$, and the estimates above show that $T$ is a contraction on $E_{\delta'}$ as well. As $T$ is of Volterra type, the restriction to $[0,\delta']$ of the fixed point constructed above is the fixed point of $T$ on $E_{\delta'}$. Thus $\gamma$ coincides near $\alpha$ with the solution constructed above, and by uniqueness for \eqref{eq:flow} in $M$ the two maximal solutions agree.
\end{proof}

Combining the three kinds of endpoint with the monotonicity of $\rho$ gives the classification, and the closest point to the origin parametrizes the curves of type (c).

\begin{proof}[Proof of Theorem~\ref{thmA}]
Completeness is Corollary~\ref{thm:completude}, and $\Sigma$ is noncompact because $\gamma$ is not closed, since $I$ is strictly increasing along it by Corollary~\ref{cor:mono}, and does not have both endpoints on the axes, by Proposition~\ref{prop:cusp}. Let $\gamma$ be distinct from the Clifford ray. By Proposition~\ref{prop:endpoints} and the discussion after it, each endpoint lies on an axis or satisfies $\rho\to\infty$, and by Proposition~\ref{prop:cusp} at most one endpoint lies on an axis. If one endpoint lies on an axis, Proposition~\ref{prop:cusp} gives (a) or (b), and the topology follows from \eqref{eq:param}, an endpoint on the $x$-axis collapsing the factor $\s^{q}$, and the embeddedness from the injectivity of $\gamma$ together with the smooth closure. By Proposition~\ref{ap:existencia}, for every $x_0>0$ there is, up to orientation, exactly one maximal solution with an endpoint at $(x_0,0)$, and by Proposition~\ref{prop:cusp} it is of type (a). The same holds on the $y$-axis. If neither endpoint lies on an axis, then $\rho\to\infty$ at both endpoints, so $\rho$ attains a minimum, which is its only critical point and is nondegenerate by Corollary~\ref{cor:unimodal}. This is (c), and $\Sigma\cong\R\times\s^{p}\times\s^{q}$.

For the last assertion, fix $P=(x_0,y_0)\in\QQ$ and let $\psi_P$ be its polar angle. Since $\rho^{2}=|\gamma|^{2}$, we have
\[
\rho\rho'=\tfrac12\left(\rho^{2}\right)'=\langle\gamma,\gamma'\rangle=\rho\cos(\psi-\sigma),
\]
because $\gamma=\rho(\cos\psi,\sin\psi)$ and $\gamma'=(\cos\sigma,\sin\sigma)$. Hence $\rho'$ vanishes at a point exactly when the tangent there is orthogonal to the position vector, that is, when
\begin{equation}\label{eq:secao}
\sigma=\psi-\tfrac\pi2\qquad\text{or}\qquad\sigma=\psi+\tfrac\pi2 .
\end{equation}
Now, a curve of type (c) attains its minimal distance to the origin, since $\rho\to\infty$ at both endpoints, and it attains it at an interior point, where $\rho'=0$. By Corollary~\ref{cor:unimodal} this is its only critical point, and it is a strict global minimum. Therefore a curve of type (c) has $P$ as its closest point to the origin if and only if it passes through $P$ with the tangent condition \eqref{eq:secao} satisfied at $P$, with $\psi=\psi_P$.

\emph{Existence.} Let $\gamma$ be the trajectory through the phase point $(x_0,y_0,\psi_P-\frac\pi2)$. Along it $\rho$ has a critical point at $P$, so $\gamma$ is not the Clifford ray, along which $\rho'\equiv1$, nor of type (a) or (b), along which $\rho$ is strictly increasing by Proposition~\ref{prop:cusp}. Hence $\gamma$ is of type (c), and by Corollary~\ref{cor:unimodal} the critical point at $P$ is its only one and is the minimum of $\rho$.

\emph{Uniqueness.} If a curve of type (c) has $P$ as its closest point to the origin, then at $P$ it satisfies \eqref{eq:secao}, so its phase point there is $(x_0,y_0,\psi_P\mp\frac\pi2)$. The two choices are exchanged by the reversibility of Lemma~\ref{lem:simetrias}(i), hence they give the same curve with the two orientations, and by uniqueness of solutions of \eqref{eq:flow} the curve is the one constructed above.
\end{proof}

\section{The ends}\label{sec:ends}

Throughout this section $\gamma$ is distinct from the Clifford ray, and we consider an endpoint $\omega$ with $\rho\to\infty$, which by Theorem~\ref{thmA} is the case for every end.

\subsection{No end with one bounded coordinate}

On a subarc where the curve is a graph over the $x$-axis, the system reduces to a single first-order equation, which is the form we shall use.

\begin{lemma}[$x$ as independent variable]\label{lem:graf}
On a subarc where $\cos\sigma>0$, the function $v:=\frac{dy}{dx}=\tan\sigma$ satisfies
\begin{equation}\label{eq:v}
\frac{dv}{dx}=\left(1+v^{2}\right)\left(B(y)-v A(x)\right).
\end{equation}
\end{lemma}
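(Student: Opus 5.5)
The plan is to compute directly with the chain rule, treating $x$ as the independent variable on the subarc where $\cos\sigma>0$. On such a subarc $x'=\cos\sigma>0$, so $x$ is a strictly increasing $C^1$ function of $s$ and the curve is locally a graph $y=y(x)$. Its slope is
\[
v=\frac{dy}{dx}=\frac{y'}{x'}=\frac{\sin\sigma}{\cos\sigma}=\tan\sigma ,
\]
which is well defined and smooth there.

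Next I differentiate $v$ with respect to $x$:
\[
\frac{dv}{dx}=\frac{1}{x'}\frac{d}{ds}\tan\sigma=\frac{\sigma'}{\cos^{3}\sigma}.
\]
I then substitute the third equation of \eqref{eq:flow}, $\sigma'=-A(x)\sin\sigma+B(y)\cos\sigma=\cos\sigma\,(B(y)-vA(x))$. This gives
\[
\frac{dv}{dx}=\frac{B(y)-vA(x)}{\cos^{2}\sigma}.
\]
Finally, $1/\cos^{2}\sigma=1+\tan^{2}\sigma=1+v^{2}$, which is \eqref{eq:v}.

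This is the mirror image of the computation in the proof of Proposition~\ref{ap:existencia}, where the curve was written as a graph over the $y$-axis with $V=\cot\sigma$. One could also obtain \eqref{eq:v} from that formula via the exchange symmetry of Lemma~\ref{lem:simetrias}(ii), swapping $x\leftrightarrow y$, $p\leftrightarrow q$ and $\sigma\mapsto\frac\pi2-\sigma$, so that $\cot\sigma$ becomes $\tan\sigma$ and the roles of $A$ and $B$ are exchanged. The direct computation is shorter, though, and I would present that one.

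There is no real obstacle. The only point to check is that dividing by $\cos\sigma$ is legitimate, and the hypothesis $\cos\sigma>0$ guarantees this. It also ensures that $y$ is a $C^1$ function of $x$ by the inverse function theorem, so that $d/dx=(1/\cos\sigma)\,d/ds$ along the subarc.
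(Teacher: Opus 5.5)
Your proposal is correct and is the same chain-rule computation as the paper, which writes $\frac{dv}{dx}=\frac{\sigma'}{\cos^{2}\sigma\,x'}=\frac{-A\sin\sigma+B\cos\sigma}{\cos^{3}\sigma}=(1+v^{2})(B-vA)$. Your side remark about recovering it from Proposition~\ref{ap:existencia} via Lemma~\ref{lem:simetrias}(ii) also checks out: the swapped coefficients $\tilde A,\tilde B$ turn into $B(y)$ and $A(x)$.
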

\begin{proof}
$\dfrac{dv}{dx}=\dfrac{\sigma'}{\cos^{2}\sigma x'}=\dfrac{-A\sin\sigma+B\cos\sigma}{\cos^{3}\sigma}=(1+v^{2})(B-vA)$.
\end{proof}

We can now exclude the ends along which one coordinate diverges while the other stays bounded, that is, the ends asymptotic to a cylinder or to a coordinate subspace.

\begin{proposition}[No mixed ends]\label{thm:noeixo}
Let $c>0$. There is no endpoint along which $x\to\infty$ while $y$ remains bounded, and none along which $y\to\infty$ while $x$ remains bounded.
\end{proposition}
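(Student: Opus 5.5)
The plan is to reduce to the first case by the mirror symmetry of Lemma~\ref{lem:simetrias}(ii), which exchanges $x$ and $y$ and replaces $(p,q)$ by $(q,p)$. We then argue by contradiction, using the graph equation \eqref{eq:v} of Lemma~\ref{lem:graf} near $\omega$. The idea is that the positive forcing term $B(y)$ makes the slope $v=dy/dx$ at least of order $1/x$. Integrating in $x$ then makes $y$ grow at least like $\log x$, which contradicts boundedness.

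\emph{Setup.} Suppose $x\to\infty$ at $\omega$ while $y$ stays bounded. By Corollary~\ref{cor:mono}, $x$ and $y$ are monotone on some interval $(s_1,s_+)$. Since $x\to\infty$, $x$ is nondecreasing there. As $\cos\sigma$ vanishes at most once, we may shrink the interval so that $\cos\sigma>0$ on $(s_1,s_+)$. Because $x\to\infty$ with unit speed, $s_+=\infty$ and $x$ maps $(s_1,\infty)$ onto some $(x_1,\infty)$. Thus the end is a graph $y=y(x)$ over $(x_1,\infty)$ with slope $v=\tan\sigma$ satisfying \eqref{eq:v}. The function $y$ is monotone and bounded, with values in $(0,Y]$ for some $Y$. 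Since $B(y)=q/y+cy$ and $B\ge 2\sqrt{qc}$ by Lemma~\ref{lem:positivo}, we set $b:=2\sqrt{qc}>0$ and get $B(y(x))\ge b$ for all $x$. No upper bound on $B$ is needed. Enlarging $x_1$ if necessary, we may assume $A(x)=p/x+cx$ is increasing and satisfies $A(x)\le 2cx$ on $(x_1,\infty)$.

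\emph{Barrier.} Let $\varphi(x):=b/(2A(x))$, which is positive and decreasing on $(x_1,\infty)$. Wherever $v\le\varphi$ we have $B-vA\ge b-b/2=b/2$, regardless of the sign of $v$. Hence \eqref{eq:v} gives $dv/dx\ge(1+v^2)\,b/2\ge b/2>0$ there. Two consequences follow.
\begin{itemize}
\item[(1)] $v$ cannot remain below $\varphi$ on a whole half-line. Otherwise $v$ would grow at least linearly, so $v\to\infty$, while $\varphi\to0$.
\item[(2)] Once $v\ge\varphi$ at some $x_2$, it stays so for all $x\ge x_2$. At a first crossing from above we would need $v'\le\varphi'\le0$, but at that point $v=\varphi$ forces $v'\ge b/2>0$.
\end{itemize}
Hence for some $x_2$ and all $x\ge x_2$,
\[
v(x)\ge \frac{b}{2A(x)}\ge\frac{b}{4cx}.
\]

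\emph{Conclusion.} Integrating the last inequality gives
\[
y(x)\ge y(x_2)+\frac{b}{4c}\log\frac{x}{x_2}\to\infty,
\]
which contradicts the boundedness of $y$. The case where $y\to\infty$ while $x$ stays bounded follows from Lemma~\ref{lem:simetrias}(ii).

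The only delicate point is the barrier step. One must check that the comparison works even when $v$ is negative, which is guaranteed because $1+v^2\ge1$. One must also check that $\varphi$ is decreasing for large $x$, so that the region $\{v\ge\varphi\}$ is forward invariant. Both checks are elementary. Note that this argument never uses the limit of $y$, so the possibility that $y\to0$ needs no separate treatment.
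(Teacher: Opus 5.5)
Your proof is correct and follows the paper's argument: the same barrier $b/(2A(x))$ for the slope equation \eqref{eq:v}, the same forward invariance of the region above it, and the same logarithmic lower bound for $y$. The one difference is a simplification. You take $b=2\sqrt{qc}$ from Lemma~\ref{lem:positivo} and observe that $B-vA\ge b/2$ whenever $v\le\varphi$, whatever the sign of $v$. This makes the paper's Steps 2 and 3 unnecessary; those steps first prove $\sin\sigma>0$ near $\omega$ and keep $y$ away from $0$ in order to define $b_0=\min_{[y_1,Y]}B$.
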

\begin{proof}
Suppose that $x\to\infty$ and $y\le Y$ at $\omega$.

\emph{Step 1.} By Corollary~\ref{cor:mono}, $\cos\sigma$ has constant sign near $\omega$, and it is positive since $x\to\infty$. Hence Lemma~\ref{lem:graf} applies.

\emph{Step 2: $\sin\sigma>0$ near $\omega$.} Otherwise $\sin\sigma<0$ near $\omega$, and since $I$ is increasing and negative, $I\ge I(s_1)=:-C$, so $|\sin\sigma|\le C/f(x)\to0$ by Lemma~\ref{lem:positivo}. Then $\cos\sigma\ge\frac12$ eventually and
\[
\sigma'=|\sin\sigma| A(x)+\cos\sigma B(y)\ge\tfrac12\cdot2\sqrt{qc}=\sqrt{qc}>0 .
\]
Since $\sigma$, lifted continuously, lies in $(-\frac\pi2,0)$ near $\omega$ and $s_+=\infty$, $\sigma$ reaches $0$ in finite arclength, which contradicts $\sin\sigma<0$.

\emph{Step 3.} By Step 2, $y$ is increasing near $\omega$, hence $y\in[y_1,Y]$ with $y_1>0$, and
\[
b_0:=\min_{[y_1,Y]}B>0 .
\]

\emph{Step 4: a barrier.} Since $x\to\infty$, we may also assume $x>\sqrt{p/c}$ near $\omega$. Let $w_0(x):=b_0/(2A(x))$, which is decreasing for $x>\sqrt{p/c}$. At a point where $v=w_0$,
\[
\frac{dv}{dx}=(1+v^{2})\left(B-w_0A\right)\ge B-\tfrac12b_0\ge\tfrac12b_0>0>\frac{dw_0}{dx},
\]
so $\{v\ge w_0\}$ is forward invariant in $x$. While $0\le v<w_0$, one has $B-vA>B-w_0A\ge\frac12b_0$, so $v$ increases at rate at least $\frac12b_0$ while $w_0$ decreases. Hence, $v$ enters $\{v\ge w_0\}$ at some finite $x_2$. Thus 
\[
\frac{dy}{dx}\ge \frac{b_0}{2A(x)},\quad\mbox{for}\quad x\ge x_2.
\]

\emph{Step 5.} For $x\ge x_2$ we have $x>\sqrt{p/c}$, hence $p/x<cx$ and $A(x)<2cx$, so $dy/dx\ge b_0/(4cx)$ and 
\[
y(x)\ge y(x_2)+\frac{b_0}{4c}\log\frac{x}{x_2}\to\infty,
\] 
contradicting $y\le Y$. The second assertion is the mirror image.
\end{proof}

Together with Proposition~\ref{prop:endpoints}, this settles the shape of every end.

\begin{corollary}[Both coordinates diverge]\label{cor:ambas}
Let $c>0$ and let $\rho\to\infty$ at $\omega$. Then $x\to\infty$, $y\to\infty$, and $\sigma\in(0,\tfrac\pi2)$ near $\omega$. In particular $w=\psi-\sigma\in(-\tfrac\pi2,\tfrac\pi2)$ and $\rho'=\cos w>0$ near $\omega$.
\end{corollary}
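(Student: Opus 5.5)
Throughout, $\rho\to\infty$ at $\omega$, so Proposition~\ref{prop:endpoints}(ii) applies: $s_+=\infty$, $x$ and $y$ are eventually monotone, and at least one of them tends to $\infty$. The plan is to first show that both coordinates diverge, then read off the sign of $\sigma$ from Corollary~\ref{cor:mono}, and finally translate this into the statement about $w$.

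For the divergence, suppose $x\to\infty$. Since $y$ is eventually monotone, either $y\to\infty$ or $y$ converges; in the second case $y$ stays bounded. If $y$ is bounded, Proposition~\ref{thm:noeixo} gives a contradiction. If instead $y\to\infty$, the mirror statement of Proposition~\ref{thm:noeixo} forces $x\to\infty$ as well. So both coordinates diverge.

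For the sign of $\sigma$, Corollary~\ref{cor:mono} says that $\cos\sigma$ and $\sin\sigma$ each change sign at most once. Hence both have a constant sign on some interval $(s_1,\infty)$. If $\cos\sigma<0$ there, $x$ would be decreasing on $(s_1,\infty)$, which is incompatible with $x\to\infty$. Also $\cos\sigma$ cannot vanish on that interval, since its zeros are isolated sign changes. Thus $\cos\sigma>0$ near $\omega$, and the same argument with $y$ gives $\sin\sigma>0$. Lifting continuously, $\sigma\in(0,\tfrac\pi2)$ near $\omega$.

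For the last assertion, $\psi\in(0,\tfrac\pi2)$ on $\QQ$, so $w=\psi-\sigma$ lies in $(-\tfrac\pi2,\tfrac\pi2)$. Therefore $\rho'=\cos w>0$ by Lemma~\ref{lem:polar}. Equivalently, $\rho\rho'=x\cos\sigma+y\sin\sigma>0$ directly.

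I expect no real obstacle. The only care needed is to exclude a zero of $\cos\sigma$ or $\sin\sigma$ at arbitrarily late times, which is handled by "at most once" in Corollary~\ref{cor:mono}.
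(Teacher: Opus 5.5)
Your proof is correct and follows the paper's argument: Proposition~\ref{prop:endpoints}(ii) together with Proposition~\ref{thm:noeixo} and its mirror shows that both coordinates diverge, and Corollary~\ref{cor:mono} then fixes the signs of $\cos\sigma$ and $\sin\sigma$ near $\omega$. The one flaw is presentational: the case split in your second paragraph should be on which coordinate is known to diverge, since the sentence ``if instead $y\to\infty$'' reads oddly under the standing assumption $x\to\infty$, but the logic is sound.
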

\begin{proof}
By Proposition~\ref{prop:endpoints}, $x$ and $y$ are eventually monotone and one of them tends to $\infty$, and by Proposition~\ref{thm:noeixo} so does the other. Hence $\cos\sigma>0$ and $\sin\sigma>0$ near $\omega$ by Corollary~\ref{cor:mono}.
\end{proof}

\subsection{Radial alignment}

By Lemma~\ref{lem:polar}, the relative phase satisfies
\begin{equation}\label{eq:wexp}
w'=-\mu(\rho)\sin w-\frac{E}{\rho},
\end{equation}
where
\[ \mu(\rho):=c\rho+\frac1\rho\quad\mbox{and}\quad \frac{E}{\rho}=\frac{q\cos\sigma}{y}-\frac{p\sin\sigma}{x}.
\]
For $c>0$ the ``equilibrium'' $w=0$ of \eqref{eq:wexp} is attracting, with rate $\mu\ge2\sqrt c$, and this is the mechanism that makes the ends conical. We write
\[
\Pi:=\frac px+\frac qy\ge\frac{|E|}{\rho}.
\]

The next result makes this precise and gives the rate at which the tangent becomes radial.

\begin{proposition}[Radial alignment]\label{prop:align}
Let $c>0$ and let $\rho\to\infty$ at $\omega$. Then $w\to0$, and there exists $s_2$ such that
\begin{equation}\label{eq:walign}
|w(s)|\le K\frac{\Pi(s)}{\rho(s)}\quad\text{for }s\ge s_2,\qquad K:=\frac\pi2+\frac{2\pi}{c}.
\end{equation}
Moreover $\psi$ has finite total variation near $\omega$ and converges to some $\psi_\infty\in[0,\frac\pi2]$.
\end{proposition}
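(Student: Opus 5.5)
The plan is to treat \eqref{eq:wexp} as a scalar equation for $w$ in which $-\mu\sin w$ is a strong restoring term and $E/\rho$ is a small forcing, and to trap $w$ under the barrier $\beta:=K\Pi/\rho$.

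\emph{Setup.} By Corollary~\ref{cor:ambas}, near $\omega$ we have $x,y\to\infty$, $\sigma\in(0,\frac\pi2)$ and $w\in(-\frac\pi2,\frac\pi2)$. Hence $\Pi\to0$, and $|\sin w|\ge\frac2\pi|w|$ holds there. Since $|E|/\rho\le\Pi$ and $\mu\ge c\rho$, the function $|w|$ satisfies, in the sense of Dini derivatives,
\[
\frac{d}{ds}|w|\le-\frac{2c}{\pi}\rho|w|+\Pi .
\]
I also record the behaviour of the barrier's ingredients. From $x'=\cos\sigma$ and $y'=\sin\sigma$ we get
\[
|\Pi'|=\left|\frac{p\cos\sigma}{x^{2}}+\frac{q\sin\sigma}{y^{2}}\right|\le\Pi\max\left(\frac1x,\frac1y\right)=o(1)\,\Pi .
\]
Together with $\rho'\le1$, this gives $\beta'\ge-o(1)\beta-\beta/\rho=-o(1)\beta$.

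\emph{Barrier.} At any instant where $|w|=\beta$, the differential inequality gives
\[
\frac{d}{ds}|w|\le\Pi\left(1-\frac{2cK}{\pi}\right)\le-3\Pi ,
\]
because $K\ge 2\pi/c$ forces $2cK/\pi\ge4$. On the other hand $\beta'\ge-o(1)K\Pi/\rho$, which exceeds $-3\Pi$ once $\rho$ is large. So $\{|w|\le\beta\}$ is forward invariant near $\omega$.

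\emph{Entry into the barrier.} I expect this to be the main obstacle. While $|w|>\beta$ we have $\Pi<\rho|w|/K$, so
\[
\frac{d}{ds}|w|\le-\left(\frac{2c}{\pi}-\frac1K\right)\rho|w|\le-\frac{c}{\pi}\rho|w| .
\]
Thus $|w|$ decreases on this set, and therefore $\cos w\ge\cos w(s_1)>0$ there. It follows that $\rho'$ is bounded below and $\rho$ grows at least linearly, so $|w|$ decays like $e^{-C s^{2}}$. The barrier decays only polynomially: $\beta\ge Kp/(\rho x)\ge Kp/\rho^{2}$ and $\rho\le s+\mathrm{const}$. Hence $|w|$ must meet $\beta$ at some $s_2$, and by invariance \eqref{eq:walign} holds for $s\ge s_2$. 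Since $\Pi/\rho\to0$, this also gives $w\to0$.

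\emph{Convergence of $\psi$.} Once $w\to0$ we have $\cos w\ge\frac12$, so $\rho\ge s/2$ eventually. Moreover $\Pi$ is bounded near $\omega$. Therefore
\[
|\psi'|=\frac{|\sin w|}{\rho}\le\frac{K\Pi}{\rho^{2}}\le\frac{C}{s^{2}} ,
\]
which is integrable. So $\psi$ has finite total variation near $\omega$ and converges to some $\psi_\infty\in[0,\frac\pi2]$.
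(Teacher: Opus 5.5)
Your proof is correct, but it takes a different route from the paper's.

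\textbf{The paper's route.} The paper uses no barrier. It first proves $w\to0$ by a crude Gronwall estimate with the constant rate $\mu\ge2\sqrt c$. It then obtains \eqref{eq:walign} by freezing the coefficients of $u'\le-\frac2\pi\mu u+\Pi$ on each unit window $[s-1,s]$, with rate $\frac{2c}{\pi}\rho(s-1)$ and forcing $\Pi(s-1)$. The Lipschitz comparisons $\rho(s-1)\ge\frac12\rho(s)$ and $\Pi(s-1)\le2\Pi(s)$ then give $u(s)\le\frac\pi2e^{-\frac c\pi\rho(s)}+\frac{2\pi}{c}\frac{\Pi(s)}{\rho(s)}$. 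The exponential term is absorbed using $\Pi/\rho\ge(p+q)/\rho^{2}$, which is where the summand $\frac\pi2$ in $K$ comes from.

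\textbf{Your route.} You show that $\{|w|\le K\Pi/\rho\}$ is forward invariant and that the trajectory must enter it. Your slow-variation bound $|\Pi'|\le\Pi\max(1/x,1/y)$ is the infinitesimal counterpart of the paper's $\Pi(s-1)\le2\Pi(s)$. Your entry step rests on the same polynomial lower bound $K\Pi/\rho\ge Kp/\rho^{2}$ that the paper uses to absorb the exponential.

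\textbf{What each buys.} In your approach, $w\to0$ is a consequence of the estimate rather than a separate step. The argument also works verbatim for every $K>\pi/(2c)$: you only need $1-\frac{2cK}{\pi}<0$ for invariance and $\frac{2c}{\pi}-\frac1K>0$ for entry. So it shows the stated constant is not sharp. The paper's approach avoids differentiating $\Pi$ and avoids the first-exit argument, using only Gronwall with constant coefficients.

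\textbf{Two small corrections.}
\begin{enumerate}
\item[(i)] In the entry step you do not need linear growth of $\rho$. The bound $\rho\ge\rho(s_1)>0$ already gives exponential decay of $|w|$ while $|w|>K\Pi/\rho$, and that beats the polynomial lower bound on the barrier.
\item[(ii)] In the last step, $\cos w\ge\frac12$ for $s\ge s_3$ gives $\rho(s)\ge\rho(s_3)+\frac12(s-s_3)$. This implies $\rho\ge s/3$ for large $s$, not $\rho\ge s/2$, since $\rho(s_3)$ may be much smaller than $s_3/2$ (for instance when $\rho$ first decreases along a curve of type (c)). This does not affect the integrability of $|\psi'|\le K\Pi/\rho^{2}$.
\end{enumerate}
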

\begin{proof}
By Corollary~\ref{cor:ambas}, near $\omega$ we have $|w|<\frac\pi2$, $x$ and $y$ increasing, so $\Pi$ is nonincreasing and tends to $0$, and $\rho$ is increasing. Let $u:=|w|$. Since $|\sin w|\ge\frac2\pi|w|$ for $|w|\le\frac\pi2$, \eqref{eq:wexp} gives, almost everywhere,
\begin{equation}\label{eq:ugron}
u'\le-\frac2\pi\mu(\rho) u+\Pi .
\end{equation}

We use the following elementary comparison twice: if $u$ is absolutely continuous and $u'\le-\alpha u+\beta$ almost everywhere on $[s_0,s]$, with constants $\alpha,\beta>0$, then, differentiating $e^{\alpha r}u(r)$,
\begin{equation}\label{eq:gronwall}
u(s)\le e^{-\alpha(s-s_0)}u(s_0)+\frac\beta\alpha .
\end{equation}

\emph{Step 1: $w\to0$.} Since $\mu\ge2\sqrt c$, \eqref{eq:gronwall} on $[s_0,s]$ gives 
\[
u(s)\le e^{-\frac{4\sqrt c}{\pi}(s-s_0)}u(s_0)+\frac{\pi\Pi(s_0)}{4\sqrt c}.
\] Letting $s\to\infty$ and then $s_0\to\infty$, we have $\limsup u=0$.

\emph{Step 2: the rate.} By Step 1, choose $s_1$ such that, for $s\ge s_1$, $\cos w\ge\frac12$, and $x,y,\rho\ge2$.

Fix $s\ge s_1+1$ and apply the comparison on the interval $[s-1,s]$, with $s$ kept fixed throughout this paragraph. For $r\in[s-1,s]$ we have $\rho(r)\ge\rho(s-1)$ and $\Pi(r)\le\Pi(s-1)$, because $\rho$ is increasing and $\Pi$ is nonincreasing near $\omega$. Hence $\mu(\rho(r))\ge c\rho(r)\ge c\rho(s-1)$, and \eqref{eq:ugron} gives
\[
u'(r)\le-\frac2\pi\mu(\rho(r))u(r)+\Pi(r)\le-\alpha_su(r)+\beta_s\qquad\text{for a.e. }r\in[s-1,s],
\]
where
\[
\alpha_s:=\frac{2c}{\pi}\rho(s-1),\qquad \beta_s:=\Pi(s-1)
\]
are constants on that interval, although they depend on the fixed endpoint $s$. Applying \eqref{eq:gronwall} on $[s-1,s]$, with $s_0=s-1$, $\alpha=\alpha_s$ and $\beta=\beta_s$, and using $u(s-1)\le\frac\pi2$, we obtain, for $r=s$,
\[
u(s)\le\frac\pi2 e^{-\alpha_s}+\frac{\beta_s}{\alpha_s}=\frac\pi2 e^{-\frac{2c}{\pi}\rho(s-1)}+\frac{\pi\Pi(s-1)}{2c\rho(s-1)} .
\]
We now compare the values at $s-1$ with those at $s$. Since $|\gamma'|=1$, the three functions $\rho$, $x$ and $y$ are $1$-Lipschitz, because $|\rho'|=|\cos w|\le1$, $|x'|=|\cos\sigma|\le1$ and $|y'|=|\sin\sigma|\le1$. Hence
\[
\rho(s-1)\ge\rho(s)-1,\qquad x(s-1)\ge x(s)-1,\qquad y(s-1)\ge y(s)-1,
\]
and, since $s\ge s_1$ gives $\rho(s),x(s),y(s)\ge2$, each of the three right-hand sides is at least half of the corresponding value at $s$:
\[
\rho(s-1)\ge\tfrac12\rho(s),\qquad x(s-1)\ge\tfrac12x(s),\qquad y(s-1)\ge\tfrac12y(s),
\]
whence $\Pi(s-1)=\frac p{x(s-1)}+\frac q{y(s-1)}\le2\Pi(s)$. Thus
\[
u(s)\le\frac\pi2 e^{-\frac c\pi\rho(s)}+\frac{2\pi}{c}\frac{\Pi(s)}{\rho(s)} .
\]
Finally, since $x,y\le\rho$, we have $\Pi/\rho\ge(p+q)/\rho^{2}$, which exceeds $e^{-\frac c\pi\rho}$ for $\rho$ large. This gives \eqref{eq:walign}.

\emph{Step 3: convergence of $\psi$.} By \eqref{eq:polar} and \eqref{eq:walign}, for $s\ge s_2$,
\[
|\psi'(s)|=\frac{|\sin w(s)|}{\rho(s)}\le\frac{|w(s)|}{\rho(s)}\le K\frac{\Pi(s)}{\rho(s)^{2}}\le K\frac{\Pi(s_2)}{\rho(s)^{2}},
\]
the last inequality because $\Pi$ is nonincreasing. On $[s_2,s_+)$ the function $\rho$ is strictly increasing, with $\rho'=\cos w\ge\frac12$ and $\rho\to\infty$, so it is a diffeomorphism of $[s_2,s_+)$ onto $[\rho(s_2),\infty)$, and the change of variable $\varrho=\rho(s)$, with $ds=d\varrho/\rho'\le2 d\varrho$, gives
\[
\int_{s_2}^{s_+}|\psi'| ds\le K\Pi(s_2)\int_{s_2}^{s_+}\frac{ds}{\rho(s)^{2}}\le2K\Pi(s_2)\int_{\rho(s_2)}^{\infty}\frac{d\varrho}{\varrho^{2}}=\frac{2K\Pi(s_2)}{\rho(s_2)}<\infty .
\]
Thus $\psi'$ is integrable on $[s_2,s_+)$, and therefore
\[
\psi(s)=\psi(s_2)+\int_{s_2}^{s}\psi'(r) dr\xrightarrow[s\to s_+]{}\psi(s_2)+\int_{s_2}^{s_+}\psi'(r) dr=:\psi_\infty ,
\]
the improper integral being absolutely convergent. Since $\psi$ takes values in $(0,\frac\pi2)$, the limit satisfies $\psi_\infty\in[0,\frac\pi2]$.
\end{proof}

The alignment now yields the decay rates along the end, once the limit direction is known to be interior.

\begin{theorem}[Conical ends]\label{thm:conico}
Let $c>0$ and let $\rho\to\infty$ at $\omega$. Then the limit angle $\psi_\infty\in(0,\frac\pi2)$, and along the end
\[
|\sin w|=O(\rho^{-2}),\qquad|\psi-\psi_\infty|=O(\rho^{-2}),\qquad \sigma'=O(\rho^{-1}),
\]
and the distance of $\gamma(s)$ to the ray $R_\infty:=\{t(\cos\psi_\infty,\sin\psi_\infty):t\ge0\}$ of angle $\psi_\infty$ satisfies
\[
\dist\left(\gamma(s),R_\infty\right)=\rho|\sin(\psi-\psi_\infty)|=O(\rho^{-1}).
\]
In particular, $\sigma\to\psi_\infty$, the end is a graph over each axis with slopes converging to $\tan\psi_\infty$ and $\cot\psi_\infty$, and $|\mathcal A|^{2}=O(\rho^{-2})$.
\end{theorem}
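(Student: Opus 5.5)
The plan has three stages: first show $\psi_\infty$ is interior, then upgrade the alignment estimate \eqref{eq:walign} from $\Pi/\rho$ to $\rho^{-2}$, and finally read off the remaining rates by integration.

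\emph{Step 1: $\psi_\infty$ is interior.} By Proposition~\ref{prop:align}, $w\to0$, so $\cos w\to1$ and the angular quantity $L=h(\psi)\cos w$ of Lemma~\ref{lem:L} satisfies $L-h(\psi)\to0$. Since $L$ is nondecreasing along the end, it has a limit $L_\infty$. It suffices to show $L$ is positive at some instant near $\omega$; Corollary~\ref{cor:confina} then keeps $h(\psi)\ge L(s_0)>0$, so $\psi_\infty\in(0,\frac\pi2)$. By Corollary~\ref{cor:ambas}, near $\omega$ both $\sigma$ and $\psi$ lie in $(0,\frac\pi2)$ and $|w|<\frac\pi2$. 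Hence $\cos w>0$ and $L(s_0)=h(\psi(s_0))\cos w(s_0)>0$ at any such $s_0$, and Step 1 is immediate. (Alternatively, Proposition~\ref{thm:noeixo}-type reasoning could be used, but the $L$ argument is cleaner.)

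\emph{Step 2: sharpening the rate.} Once $\psi$ stays in a compact subinterval $[\psi_1,\psi_2]\subset(0,\frac\pi2)$, we have $x\ge\rho\cos\psi_2$ and $y\ge\rho\sin\psi_1$, so $\Pi\le C/\rho$. Then \eqref{eq:walign} gives $|w|\le K\Pi/\rho\le C'\rho^{-2}$, and hence $|\sin w|=O(\rho^{-2})$. Next, $|\psi'|\le|w|/\rho=O(\rho^{-3})$. Since $\rho'\ge\frac12$, changing variables as in Step~3 of Proposition~\ref{prop:align} gives
\[
|\psi(s)-\psi_\infty|\le\int_s^{s_+}|\psi'|\le 2C'\int_{\rho(s)}^\infty\varrho^{-3}d\varrho=O(\rho^{-2}),
\]
which is actually better than needed at this stage. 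The distance formula is elementary: for angular difference less than $\frac\pi2$, the distance of a point with polar coordinates $(\rho,\psi)$ to the ray is $\rho|\sin(\psi-\psi_\infty)|$, which is $O(\rho^{-1})$.

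\emph{Step 3: curvature and graphs.} From \eqref{eq:polar}, $\sigma'=c\rho\sin w+E/\rho$. Here $E$ is bounded because $\psi$ is confined away from $0$ and $\frac\pi2$, and $c\rho\sin w=O(\rho^{-1})$, so $\sigma'=O(\rho^{-1})$. Also $\sigma=\psi-w\to\psi_\infty$, so the slopes $\tan\sigma$ and $\cot\sigma$ converge to $\tan\psi_\infty$ and $\cot\psi_\infty$. Since $\cos\sigma,\sin\sigma$ are bounded away from $0$, $x$ and $y$ are eventually strictly increasing, and the end is a graph over each axis. Finally, in \eqref{eq:A2} the terms $p\sin^2\sigma/x^2$ and $q\cos^2\sigma/y^2$ are $O(\rho^{-2})$ because $x,y\gtrsim\rho$, and $(\sigma')^2=O(\rho^{-2})$, so $|\mathcal A|^2=O(\rho^{-2})$.

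The only step with real content is Step 1. The main thing to double-check there is that Corollary~\ref{cor:ambas} indeed gives $|w|<\frac\pi2$ near $\omega$; it does, since both $\psi$ and $\sigma$ lie in $(0,\frac\pi2)$. So no real obstacle remains.
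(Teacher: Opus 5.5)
Your proposal is correct and follows essentially the same route as the paper. Positivity of $L$ at one instant near $\omega$ (from Corollary~\ref{cor:ambas}) combined with Corollary~\ref{cor:confina} confines $\psi$, so $\Pi=O(\rho^{-1})$ turns \eqref{eq:walign} into $|w|=O(\rho^{-2})$. Integrating $|\psi'|=O(\rho^{-3})$ with $\rho'\ge\frac12$ then gives $|\psi-\psi_\infty|=O(\rho^{-2})$, and the remaining assertions are read off from \eqref{eq:polar} and \eqref{eq:A2}; your aside about the limit $L_\infty$ is unnecessary but harmless.
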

\begin{proof}
We first show that $\psi_\infty\in(0,\frac\pi2)$. By Corollary~\ref{cor:ambas}, $\sigma\in(0,\frac\pi2)$ near $\omega$, and $\psi\in(0,\frac\pi2)$ always, so $|w|<\frac\pi2$ and $\cos w>0$ there. Fix such an $s_0$. Then $L(s_0)=h(\psi(s_0))\cos w(s_0)>0$, and Corollary~\ref{cor:confina} gives
\[
h(\psi(s))\ge L(s_0)>0\qquad\text{for every }s\ge s_0 ,
\]
so $\psi$ remains in a compact subinterval of $(0,\frac\pi2)$ along the end, and $\psi_\infty\in(0,\frac\pi2)$.

In particular, there is $\delta>0$ with $\psi\in[\delta,\frac\pi2-\delta]$ near $\omega$, so that $x,y\ge\rho\sin\delta$ and $\Pi\le(p+q)/(\rho\sin\delta)$. Then \eqref{eq:walign} gives $|\sin w|\le|w|\le K\Pi/\rho\le C_1\rho^{-2}$, with $C_1:=K(p+q)/\sin\delta$, and therefore, by \eqref{eq:polar},
\[
|\psi'|=\frac{|\sin w|}{\rho}\le C_1\rho^{-3}.
\]
Integrating this from $s$ to $s_+$, and using again $ds\le2 d\varrho$ as in Step 3 of Proposition~\ref{prop:align},
\[
|\psi(s)-\psi_\infty|\le\int_s^{s_+}|\psi'(r)| dr\le C_1\int_s^{s_+}\frac{dr}{\rho(r)^{3}}\le2C_1\int_{\rho(s)}^{\infty}\frac{d\varrho}{\varrho^{3}}=\frac{C_1}{\rho(s)^{2}},
\]
that is, $|\psi-\psi_\infty|=O(\rho^{-2})$. Writing $u_\infty:=(\cos\psi_\infty,\sin\psi_\infty)$ and $u_\infty^{\perp}:=(-\sin\psi_\infty,\cos\psi_\infty)$, we have
\[
\gamma(s)=\rho\cos(\psi-\psi_\infty) u_\infty+\rho\sin(\psi-\psi_\infty) u_\infty^{\perp},
\]
so the orthogonal projection of $\gamma(s)$ onto the line $\R u_\infty$ is $\rho\cos(\psi-\psi_\infty) u_\infty$, and the distance from $\gamma(s)$ to that line is $\rho|\sin(\psi-\psi_\infty)|$. Since $\psi$ and $\psi_\infty$ both lie in $(0,\frac\pi2)$, we have $|\psi-\psi_\infty|<\frac\pi2$ and the projection has a positive coefficient, hence it lies on the ray $\{\psi=\psi_\infty\}$ itself. Therefore the distance from $\gamma(s)$ to that ray is exactly
\[
\rho|\sin(\psi-\psi_\infty)|\le\rho|\psi-\psi_\infty|=O(\rho^{-1}).
\] Moreover $|E|\le(p+q)/\sin\delta$, so $\sigma'=c\rho\sin w+E/\rho=O(\rho^{-1})$, and $\sigma=\psi-w\to\psi_\infty$. The slopes are $\tan\sigma$ and $\cot\sigma$. Finally, by \eqref{eq:A2},
\[
|\mathcal A|^{2}=(\sigma')^{2}+p\frac{\sin^{2}\sigma}{x^{2}}+q\frac{\cos^{2}\sigma}{y^{2}}=O(\rho^{-2}).\qedhere
\]
\end{proof}

The side from which an end approaches its asymptotic ray is governed by the function
\begin{equation}\label{eq:kappa}
\kappa(\psi):=q\cot\psi-p\tan\psi ,\qquad \psi\in(0,\tfrac\pi2).
\end{equation}
It is strictly decreasing, since $\cot$ decreases and $\tan$ increases, and it vanishes exactly at $\psi_0$, because $\tan^{2}\psi_0=q/p$. Hence $\kappa(\psi)>0$ if and only if $\psi<\psi_0$. Substituting $\sigma=\psi-w$, $x=\rho\cos\psi$ and $y=\rho\sin\psi$ into $E/\rho$ and expanding, we obtain
\[
\begin{aligned}
-\frac{E}{\rho}&=\frac{p\sin(\psi-w)}{\rho\cos\psi}-\frac{q\cos(\psi-w)}{\rho\sin\psi}\\
&=\frac1\rho\left[-\kappa(\psi)\cos w-(p+q)\sin w\right],
\end{aligned}
\]
so that \eqref{eq:wexp} becomes, with $n=p+q+1$,
\begin{equation}\label{eq:wexata}
w'=-\left(c\rho+\frac n\rho\right)\sin w-\frac{\kappa(\psi)}{\rho}\cos w .
\end{equation}
The end approaches its asymptotic ray from one side only, and the side is determined by the position of $\psi_\infty$ relative to the Clifford angle.

\begin{proposition}[The end stays on one side of $R_\infty$]\label{prop:lado}
Let $c>0$, let $\rho\to\infty$ at $\omega$ and suppose that $\psi_\infty\ne\psi_0$. Then, near $\omega$, the relative phase $w$ has a constant sign, $\psi$ is strictly monotone, and
\[
\psi<\psi_\infty\quad\text{if }\psi_\infty<\psi_0,\qquad
\psi>\psi_\infty\quad\text{if }\psi_\infty>\psi_0 .
\]
In particular $\sin(\psi-\psi_\infty)$ does not vanish near $\omega$, and the end stays strictly on one side of the ray $R_\infty$, namely on the side away from the Clifford ray.
\end{proposition}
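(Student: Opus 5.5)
Assume $\psi_\infty<\psi_0$; the case $\psi_\infty>\psi_0$ then follows from the mirror symmetry of Lemma~\ref{lem:simetrias}(ii), which exchanges $p$ and $q$ and sends $\psi\mapsto\frac\pi2-\psi$, $\psi_0\mapsto\frac\pi2-\psi_0$. Everything rests on \eqref{eq:wexata}. By Theorem~\ref{thm:conico}, $w\to0$ and $\psi\to\psi_\infty$, so $\kappa(\psi)\to\kappa(\psi_\infty)=:\kappa_\infty>0$ and $\cos w\to1$. Choose $s_3$ so that, for $s\ge s_3$, $\kappa(\psi)\cos w\ge\kappa_\infty/2$. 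The aim is to show that $w<0$ near $\omega$. Then $\psi'=-\sin w/\rho>0$ forces $\psi$ to be strictly increasing, and since it converges to $\psi_\infty$, we get $\psi<\psi_\infty$.

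\emph{Step 1: $\{w<0\}$ is forward invariant after $s_3$.} At an instant $s\ge s_3$ with $w=0$, \eqref{eq:wexata} gives $w'=-\kappa(\psi)/\rho<0$. So every zero of $w$ after $s_3$ is a transversal crossing from $+$ to $-$. Consequently $w$ has at most one zero on $[s_3,\infty)$, and once $w<0$ it stays negative.

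\emph{Step 2: exclude $w>0$ for all large $s$.} This is the main obstacle. Suppose $w>0$ on $[s_4,\infty)$. Then $\psi'<0$, so $\psi$ decreases to $\psi_\infty$, which by itself is not a contradiction. The plan is to use the quantitative balance in \eqref{eq:wexata}. While $w>0$ we have $w'\le -c\rho\sin w-\kappa_\infty/(2\rho)<-\kappa_\infty/(2\rho)$. Since $\rho'\ge\frac12$, we have $\rho(s)\le\rho(s_4)+(s-s_4)$, so $\int^\infty ds/\rho=\infty$. Integrating therefore forces $w$ to decrease by an infinite amount, which contradicts $w>0$. Hence $w$ vanishes at some point after $s_4$, and by Step 1 it is negative afterwards. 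The same estimate applied from $s_3$ shows that $w$ cannot remain positive, so $w$ has constant sign, negative, near $\omega$. (The term $-c\rho\sin w$ only helps here, so the divergence of $\int ds/\rho$ is the key point. I would double-check that $\rho$ grows at most linearly, which holds because $|\rho'|\le1$.)

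\emph{Step 3: conclusion.} With $w<0$ near $\omega$, $\psi$ is strictly increasing and tends to $\psi_\infty$, so $\psi<\psi_\infty<\psi_0$. Thus $\sin(\psi-\psi_\infty)\ne0$, and the curve lies strictly on the side of $R_\infty$ toward smaller polar angle, that is, away from the Clifford ray. In the mirrored case $\kappa_\infty<0$, zeros of $w$ cross from $-$ to $+$, we get $w>0$ eventually, $\psi$ decreases to $\psi_\infty$, and $\psi>\psi_\infty>\psi_0$. I would also check the sign convention relating $w$ and $\psi'$ from \eqref{eq:polar} so that "away from the Clifford ray" comes out correctly.
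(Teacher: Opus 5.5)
Your proposal is correct and follows essentially the same argument as the paper. It uses the crossing rule from \eqref{eq:wexata}: when $\kappa(\psi)>0$, every zero of $w$ is a downward crossing. It then rules out $w\ge0$ for all large $s$ via $w'\le-\kappa_0/(2\rho)$ and the divergence of $\int ds/\rho$, which follows from $|\rho'|\le1$, and concludes with $\psi'=-\sin w/\rho>0$. The differences are cosmetic: you order the two steps the other way, and you get the case $\psi_\infty>\psi_0$ from the mirror symmetry of Lemma~\ref{lem:simetrias}(ii) (under which $w\mapsto-w$), whereas the paper repeats the argument with the signs reversed.
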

\begin{proof}
\emph{Step 1: in the case $\psi_\infty<\psi_0$, $w<0$ for $s$ sufficiently large.} By Theorem~\ref{thm:conico}, $\psi\to\psi_\infty$ and $w\to0$, so there are $\kappa_0>0$ and $s_3$ such that, on $[s_3,s_+)$,
\[
\kappa(\psi)\ge\kappa_0>0,\qquad |w|<\frac\pi2,\qquad \cos w\ge\frac12 .
\]
We claim that $w<0$ somewhere on $[s_3,s_+)$. Otherwise $w\ge0$ there, and $\sin w\ge0$, so \eqref{eq:wexata} gives
\[
w'\le-\frac{\kappa(\psi)}{\rho}\cos w\le-\frac{\kappa_0}{2\rho}\qquad\text{on }[s_3,s_+).
\]
Since $\rho$ is $1$-Lipschitz, $\rho(s)\le\rho(s_3)+(s-s_3)$, whence
\[
\int_{s_3}^{s_+}\frac{ds}{\rho(s)}\ge\int_{s_3}^{\infty}\frac{ds}{\rho(s_3)+s-s_3}=\infty ,
\]
and therefore $w(s)\le w(s_3)-\frac{\kappa_0}2\int_{s_3}^{s}\frac{dr}{\rho(r)}\to-\infty$, contradicting $w\ge0$.

So $w(s_4)<0$ for some $s_4\ge s_3$. Moreover $w$ cannot return to $0$: at an instant where $w=0$, \eqref{eq:wexata} gives $w'=-\kappa(\psi)/\rho<0$, so $w$ crosses the value $0$ downward, and a return would require an upward crossing. Hence $w<0$ on $[s_4,s_+)$.

\emph{Step 2: conclusion.} By \eqref{eq:polar}, $\psi'=-\sin w/\rho>0$ on $[s_4,s_+)$, so $\psi$ is strictly increasing there and, converging to $\psi_\infty$, satisfies $\psi<\psi_\infty$. In particular $\psi-\psi_\infty$ has constant sign near $\omega$, and so does $\sin(\psi-\psi_\infty)$, since $|\psi-\psi_\infty|<\frac\pi2$.

The case $\psi_\infty>\psi_0$ is symmetric: there $\kappa(\psi)\le-\kappa_0<0$ near $\omega$, the same argument gives $w>0$, and $\psi$ decreases to $\psi_\infty$, so that $\psi>\psi_\infty$.
\end{proof}

The same crossing rule gives a global statement, valid along the whole curve, as long as the curve avoids the Clifford direction.

\begin{proposition}[Global version]\label{prop:lado-global}
Let $c>0$ and let $\gamma$ be of type (a) or (b), or the branch of a curve of type (c) beyond its closest point to the origin, so that $\cos w>0$ along $\gamma$.
\begin{enumerate}
\item[(i)] If $\psi<\psi_0$ along $\gamma$, then $w$ vanishes at most once, downward, and $\psi$ increases strictly to $\psi_\infty$ after that instant. If $\gamma$ is of type (a), then $w<0$ from the axis onward and $\psi$ increases along the whole curve.
\item[(ii)] If $\psi>\psi_0$ along $\gamma$, the mirror statements hold, with $w$ crossing upward, $\psi$ decreasing to $\psi_\infty$, and type (b) in place of type (a).
\end{enumerate}
\end{proposition}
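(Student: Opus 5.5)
The plan is to rest everything on the crossing rule for the relative phase coming from \eqref{eq:wexata}, applied now along the whole curve rather than only near $\omega$. Assume $\psi<\psi_0$ along $\gamma$, so that $\kappa(\psi)>0$ everywhere by the monotonicity of $\kappa$ in \eqref{eq:kappa}. At any instant where $w=0$, \eqref{eq:wexata} reduces to
\[
w'=-\frac{\kappa(\psi)}{\rho}\cos 0=-\frac{\kappa(\psi)}{\rho}<0 .
\]
Every zero of $w$ is therefore a transversal downward crossing. In particular, zeros are isolated, and after one downward crossing $w$ could only vanish again through an upward crossing, which is impossible. So $w$ vanishes at most once. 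Here we use that $w$ is a continuous function with values in $(-\frac\pi2,\frac\pi2)$: the hypothesis $\cos w>0$ lets us lift $w$ into that interval, so that no ambiguity modulo $2\pi$ arises and $\sin w$ has the sign of $w$.

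After the crossing (or on the whole curve, if $w<0$ throughout), $w<0$. Then \eqref{eq:polar} gives $\psi'=-\sin w/\rho>0$, so $\psi$ is strictly increasing. By Theorem~\ref{thm:conico}, on the branch going to $\omega$ it converges to $\psi_\infty$, and monotonicity gives $\psi<\psi_\infty$ there. For the branch of a type (c) curve beyond its closest point, $\rho'=\cos w>0$ holds on that branch by Corollary~\ref{cor:unimodal}, so the hypothesis is satisfied. Along a curve of type (a), $\rho$ is strictly increasing by Proposition~\ref{prop:cusp}, which again gives $\cos w>0$.

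The additional claim for type (a) is that $w<0$ from the axis onward. This follows once $w$ is shown to be negative near the endpoint $\alpha=(x_0,0)$, since a later zero could only be approached from above. Near $\alpha$ we have $\psi\to0$ while, by Proposition~\ref{prop:sn1}(i) and Proposition~\ref{prop:cusp}, $\sigma\to\frac\pi2$ with $\sigma\in(0,\frac\pi2)$. Hence
\[
w=\psi-\sigma\to-\frac\pi2 ,
\]
so $w<0$ near the axis. The crossing rule then shows that $w$ never reaches $0$, and $\psi$ increases along the whole curve.

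Part (ii) follows from part (i) by the mirror symmetry of Lemma~\ref{lem:simetrias}(ii). The substitution $(x,y,\sigma)\mapsto(y,x,\frac\pi2-\sigma)$ sends $\psi\mapsto\frac\pi2-\psi$, $w\mapsto-w$, exchanges $p$ and $q$, and maps $\psi_0(p,q)$ to $\frac\pi2-\psi_0(q,p)$. It therefore converts the condition $\psi>\psi_0$ into $\tilde\psi<\tilde\psi_0$ and type (b) into type (a). The only delicate point in the whole argument is the lift of $w$ into $(-\frac\pi2,\frac\pi2)$, guaranteed by $\cos w>0$, and the limit $w\to-\frac\pi2$ at the axis endpoint; everything else is the one-line sign computation above.
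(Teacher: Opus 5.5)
Your proof is correct and follows the paper's argument: the crossing rule from \eqref{eq:wexata}, the start at $w=-\frac\pi2$ on the axis for type (a), and the mirror symmetry for (ii). The one difference is that you obtain $\cos w>0$ from $\rho'=\cos w$ and the monotonicity of $\rho$, whereas the paper uses $L=h(\psi)\cos w$ starting from $L=0$; this is the same fact, since $F=\rho^{n}e^{c\rho^{2}/2}L$ has the sign of $\rho'$. There is also a harmless slip: the mirror map sends $\psi_0(p,q)$ to $\frac\pi2-\psi_0(p,q)=\psi_0(q,p)$, not to $\frac\pi2-\psi_0(q,p)$, but your conclusion $\tilde\psi<\tilde\psi_0$ is right.
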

\begin{proof}
At the endpoint on the axis of a curve of type (a) we have $\psi\to0$ and $w\to-\frac\pi2$, so $h(\psi)\to0$ and $L=h(\psi)\cos w\to0$. At the closest point of a curve of type (c), on either branch oriented away from that point, we have $w=\pm\frac\pi2$ and again $L=0$. In both cases $L$ is strictly increasing afterwards by Lemma~\ref{lem:L}, hence $L>0$, that is, $\cos w>0$ and $|w|<\frac\pi2$, along $\gamma$. In particular the only zeros of $\sin w$ along $\gamma$ are the zeros of $w$ itself.

Assume $\psi<\psi_0$ along $\gamma$. By \eqref{eq:wexata}, at a zero of $w$ one has $w'=-\kappa(\psi)/\rho<0$, so every zero is simple and $w$ crosses it downward, and two consecutive zeros would have opposite directions, so there is at most one. After it, $w<0$, whence $\psi'=-\sin w/\rho>0$ and $\psi$ increases. Since $\psi\to\psi_\infty$, this gives $\psi<\psi_\infty$ there. For a curve of type (a), $w$ starts at $-\frac\pi2$, and a zero of $w$ would be an upward crossing, which is excluded, hence $w<0$ along the whole curve. Item (ii) follows by applying (i) to the curve obtained from $\gamma$ by the mirror symmetry of Lemma~\ref{lem:simetrias}(ii), which sends $(x,y,\sigma;p,q)$ to $(y,x,\frac\pi2-\sigma;q,p)$, and therefore $\psi$ to $\frac\pi2-\psi$, the Clifford angle $\psi_0$ to $\frac\pi2-\psi_0$, which is the Clifford angle of the parameters $(q,p)$, and $w$ to $-w$. A curve of type (b) is sent to a curve of type (a).
\end{proof}

The sign of the mean curvature is read off from the polar angle.

\begin{lemma}[Sign of the mean curvature]\label{lem:Hpsi}
Along every solution of \eqref{eq:flow}, the mean curvature of $\Sigma$ with respect to the normal $\nu$ of Proposition~\ref{prop:sistema} is
\[
H=c\rho^{2}\psi'.
\]
In particular $H$ vanishes exactly at the critical points of $\psi$.
\end{lemma}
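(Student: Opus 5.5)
The plan is a direct computation, combining Proposition~\ref{prop:sistema} with the soliton equation and the polar form of Lemma~\ref{lem:polar}. Since $\gamma$ solves \eqref{eq:flow}, Proposition~\ref{prop:sistema} shows that $\Sigma$ satisfies \eqref{eq:soliton}. Therefore
\[
H=c\langle X,\nu\rangle=c\left(x\sin\sigma-y\cos\sigma\right),
\]
which lets us avoid computing $H$ from the curvature formula directly.

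Next I rewrite $x\sin\sigma-y\cos\sigma$ in polar coordinates, using $x=\rho\cos\psi$ and $y=\rho\sin\psi$:
\[
x\sin\sigma-y\cos\sigma=\rho\left(\cos\psi\sin\sigma-\sin\psi\cos\sigma\right)=\rho\sin(\sigma-\psi)=-\rho\sin w .
\]
This is the same identity as the one in the proof of Lemma~\ref{lem:polar}, namely $-x\sin\sigma+y\cos\sigma=\rho\sin w$, so it is already established. Lemma~\ref{lem:polar} also gives $\psi'=-\sin w/\rho$, that is, $-\rho\sin w=\rho^{2}\psi'$. Substituting this into the expression for $H$ gives $H=c\rho^{2}\psi'$.

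For the final assertion, note that $c>0$ and $\rho>0$ on $\QQ$. Hence $H$ vanishes exactly where $\psi'$ vanishes, which is exactly at the critical points of $\psi$. Moreover, $H$ has the sign of $\psi'$, and this is the form that will be used in Theorem~\ref{thmB}(v).

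I do not expect any real obstacle. The only point needing care is the sign convention: $H$ must be taken with respect to the normal $\nu=(\Psi_1\sin\sigma,-\Psi_2\cos\sigma)$ of Proposition~\ref{prop:sistema}, and $\langle X,\nu\rangle$ must be evaluated with that same $\nu$. Both quantities are recorded in Proposition~\ref{prop:sistema} with that choice, so they are consistent.
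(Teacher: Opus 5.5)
Your proposal is correct and matches the paper's proof: use the soliton equation to write $H=c\langle X,\nu\rangle=c(x\sin\sigma-y\cos\sigma)=-c\rho\sin w$, then substitute $\psi'=-\sin w/\rho$ from Lemma~\ref{lem:polar}. The vanishing statement needs only $c\neq0$ and $\rho>0$, so it holds in the generality of \eqref{eq:soliton}, not just for $c>0$.
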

\begin{proof}
By Proposition~\ref{prop:sistema} and \eqref{eq:soliton},
\[
H=c\langle X,\nu\rangle=c\,(x\sin\sigma-y\cos\sigma)=-c\rho\sin w,
\]
and $\psi'=-\sin w/\rho$ by \eqref{eq:polar}.
\end{proof}

\begin{remark}\label{rem:lado-numerico}
Numerical examples suggest that the hypothesis $\psi\ne\psi_0$ of Proposition~\ref{prop:lado-global} is not automatic for curves of type (a). For $p=q=1$, $c=1$ and the curve leaving the $x$-axis at $x_0=0.2$, the polar angle appears to cross the Clifford angle $\psi_0$ once, so that $\psi$ is not monotone along the curve, and the limit angle appears to satisfy $\psi_\infty>\psi_0$. This behavior is consistent with the results above. After the crossing we have $\psi>\psi_0$, hence $\kappa(\psi)<0$ by \eqref{eq:kappa}, and by \eqref{eq:wexata} every zero of $w$ is an upward crossing. Thus $w$ vanishes at most once after the crossing, and from then on $\psi$ decreases to $\psi_\infty$, approaching it from the side away from the Clifford ray, as in Proposition~\ref{prop:lado}. Figure~\ref{fig:lado} shows this curve, together with the functions $\psi-\psi_0$ and $w$ along it.
\end{remark}

\begin{figure}[ht]
\centering
\includegraphics[width=\textwidth]{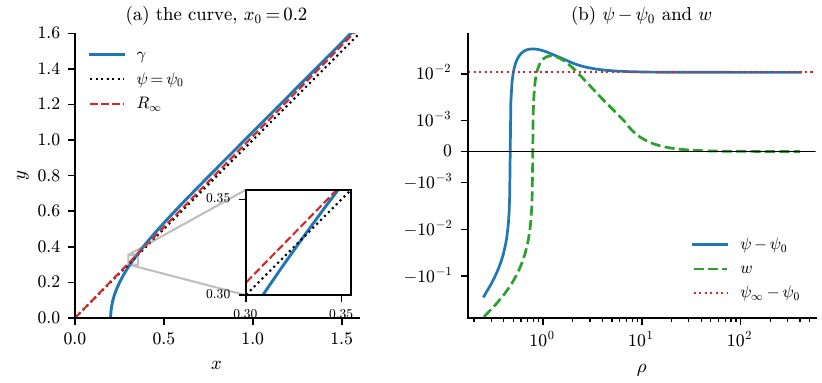}
\caption{The curve of Remark~\ref{rem:lado-numerico}. In (a) the curve, the Clifford ray and the asymptotic ray $R_\infty$, which differ in direction by about $10^{-2}$ radians. The inset enlarges the crossing at $\rho\approx0.47$. In (b) the functions $\psi-\psi_0$ and $w$, on a symmetric logarithmic scale.}
\label{fig:lado}
\end{figure}

The two asymptotic directions of a curve of type (c) are now identified as the extreme values of $L$, which bounds them in terms of any point of the curve.

\begin{corollary}[The asymptotic angles are the extremes of $L$]\label{cor:extremos}
Let $c>0$ and let $\gamma$ be of type (c), with limit angles $\psi_\alpha$ at $\alpha$ and $\psi_\omega$ at $\omega$. Then
\[
h(\psi_\omega)=\sup_sL(s),\qquad h(\psi_\alpha)=-\inf_sL(s),
\]
and consequently
\[
-h(\psi_\alpha)\le L(s)\le h(\psi_\omega)\qquad\text{for every }s .
\]
In particular, at every instant where the tangent is radial one has $h(\psi)\le h(\psi_\omega)$, and at every instant where it is anti-radial, $h(\psi)\le h(\psi_\alpha)$.
\end{corollary}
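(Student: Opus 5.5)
The plan is to use the monotonicity of $L=h(\psi)\cos w$ from Lemma~\ref{lem:L} together with the end asymptotics of Theorem~\ref{thm:conico}. Since $L$ is nondecreasing along $\gamma$, its supremum is $\lim_{s\to s_+}L$ and its infimum is $\lim_{s\to s_-}L$. The whole statement therefore reduces to computing these two limits.

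At $\omega$, Theorem~\ref{thm:conico} gives $\psi\to\psi_\omega\in(0,\frac\pi2)$ and $w\to0$. Since $h$ is continuous on $(0,\frac\pi2)$, we get $L\to h(\psi_\omega)\cdot1=h(\psi_\omega)$.

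At $\alpha$, the end is treated through the reversal of Lemma~\ref{lem:simetrias}(i). The reversed curve $\tilde\gamma(s)=(x(-s),y(-s),\sigma(-s)+\pi)$ has the same $\rho$ and $\psi$, while $\tilde w=\psi-\sigma-\pi=w-\pi$, so $\cos\tilde w=-\cos w$. Applying Theorem~\ref{thm:conico} to $\tilde\gamma$ at its endpoint $\omega$, which is the original $\alpha$, gives $\tilde w\to0$ and $\psi\to\psi_\alpha$. Hence $\cos w\to-1$ as $s\to s_-$, so $L\to-h(\psi_\alpha)$. One should check that $\psi_\alpha$ is indeed the limit angle of that end and that the interior-angle conclusion $\psi_\alpha\in(0,\frac\pi2)$ applies, which it does since the reversed end is an end with $\rho\to\infty$. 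Together with monotonicity, this yields $\inf L=-h(\psi_\alpha)$, $\sup L=h(\psi_\omega)$, and the two-sided bound $-h(\psi_\alpha)\le L(s)\le h(\psi_\omega)$ for every $s$.

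For the final assertions, at an instant where the tangent is radial we have $w\equiv0 \pmod{2\pi}$, so $\cos w=1$ and $h(\psi)=L\le h(\psi_\omega)$. At an anti-radial instant $\cos w=-1$, so $h(\psi)=-L\le h(\psi_\alpha)$. There is no real obstacle here. The only point needing care is the bookkeeping of the sign of $\cos w$ under reversal, together with the fact that $\sup$ and $\inf$ are limits rather than attained values; they are not attained, since $L$ is strictly increasing away from radial instants.
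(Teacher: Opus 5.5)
Your proposal is correct and follows essentially the same route as the paper's proof. Both use the monotonicity of $L$ from Lemma~\ref{lem:L}, take the limit $L\to h(\psi_\omega)$ at $\omega$ from Theorem~\ref{thm:conico}, and obtain $L\to-h(\psi_\alpha)$ at $\alpha$ by applying the same theorem to the reversed trajectory, where $\cos\tilde w=-\cos w$. The radial and anti-radial cases are then read off from $\cos w=\pm1$.
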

\begin{proof}
By Theorem~\ref{thm:conico}, $w\to0$ and $\psi\to\psi_\omega$ as $s\to s_+$, so $L=h(\psi)\cos w\to h(\psi_\omega)$. At the other endpoint, the reversibility of Lemma~\ref{lem:simetrias}(i) turns the end $\alpha$ into an end of the reversed trajectory, along which the tangent becomes radial as well, and in the original orientation this means $w\to\pm\pi$ and $\psi\to\psi_\alpha$ as $s\to s_-$, so that $L\to-h(\psi_\alpha)$. Since $L$ is nondecreasing by Lemma~\ref{lem:L}, these two limits are its infimum and its supremum, and the displayed bounds follow. The last two assertions are the cases $\cos w=1$ and $\cos w=-1$ of $L=h(\psi)\cos w$.
\end{proof}

The description of the ends also yields the properness of $\Sigma$ and its volume growth.

\begin{proof}[Proof of Theorem~\ref{thmB}]
Items (i)--(iv) were obtained in Section~\ref{sec:ends}. Item (i) is Corollary~\ref{cor:ambas}, which rests on Proposition~\ref{thm:noeixo}. Item (ii) is Theorem~\ref{thm:conico}, item (iii) follows from Corollary~\ref{cor:ambas} and Theorem~\ref{thm:conico}, and item (iv) is Proposition~\ref{prop:lado}.

Item (v) is immediate for the curves that leave an axis. For a curve of type (a), oriented away from the axis, the hypothesis $\psi\ne\psi_0$ implies $\psi<\psi_0$, and Proposition~\ref{prop:lado-global}(i) gives $\psi'>0$. Likewise, on a curve of type (b) oriented away from the axis, $\psi>\psi_0$ and Proposition~\ref{prop:lado-global}(ii) gives $\psi'<0$.

The curves of type (c) require more care. Let $P$ be the unique point of minimum distance to the origin, and orient each branch away from $P$. Then $w(P)=\frac\pi2$ on one branch and $w(P)=-\frac\pi2$ on the other. Suppose first that $\psi<\psi_0$ along the whole curve. By \eqref{eq:wexata}, at every zero of $w$,
\[
w'=-\frac{\kappa(\psi)}{\rho}<0,
\]
so every zero is a downward crossing and there is at most one on each branch. On the branch starting with $w=-\frac\pi2$, no such crossing is possible, hence $w<0$ and $\psi'>0$ throughout.

On the branch starting with $w=\frac\pi2$, a zero must occur. Otherwise $w>0$ and $\psi'<0$ throughout, so that $\psi_\infty\le\psi(P)<\psi_0$, whereas Proposition~\ref{prop:lado} gives $w<0$ near that end. Thus $w$ crosses zero exactly once on this branch, from positive to negative values, and $\psi$ decreases before the crossing and increases afterwards. The crossing is therefore a critical point $s_*$ of $\psi$, and it is the unique global minimum of $\psi$ on the whole curve. Differentiating $\psi'=-\sin w/\rho$ at $s_*$, where $w=0$, we obtain
\[
\psi''(s_*)=-\frac{w'(s_*)}{\rho(s_*)}=\frac{\kappa(\psi(s_*))}{\rho(s_*)^{2}}>0,
\]
so the minimum is nondegenerate. If $\psi>\psi_0$, the same argument with reversed signs shows that $\psi$ has exactly one critical point on the whole curve, a nondegenerate global maximum.

Since $\psi'$ has a constant sign along curves of types (a) and (b), and changes sign at the nondegenerate critical point along a curve of type (c), Lemma~\ref{lem:Hpsi} gives the assertions on the mean curvature.

As for properness, $\Sigma$ has at most two ends by Theorem~\ref{thmA}, along each of which $\rho\to\infty$, and the rest of the profile curve is a compact arc, possibly closed off at an endpoint on an axis. Hence the preimage of a compact set is compact, and $\Sigma$ is proper.

It remains to compute the volume growth. The parametrization \eqref{eq:param} is orthogonal, with $|\partial_sX|=1$, so that the induced metric is
\[
ds^{2}+x(s)^{2} g_{\s^{p}}+y(s)^{2} g_{\s^{q}}
\]
and the volume element is $x^{p}y^{q} ds\, d\mu_{\s^{p}}d\mu_{\s^{q}}$. Hence
\[
\mathrm{vol}\left(\Sigma\cap B_R\right)=\omega_p\omega_q\int_{\{\rho\le R\}}x(s)^{p}y(s)^{q} ds ,
\]
where $\omega_p$ and $\omega_q$ are the volumes of the unit spheres. On the $j$-th end, Theorem~\ref{thm:conico} gives $\psi=\psi_j+O(\rho^{-2})$ and $w=O(\rho^{-2})$, with $\psi_j\in(0,\frac\pi2)$. Since $\rho'=\cos w>0$ sufficiently far along that end, we may use $\rho$ as a parameter, obtaining
\[
\begin{aligned}
x^py^q\,ds
&=\rho^{p+q}\frac{\cos^p\psi\,\sin^q\psi}{\cos w}\,d\rho\\
&=\rho^{n-1}\left(\cos^p\psi_j\,\sin^q\psi_j+O(\rho^{-2})\right)\,d\rho.
\end{aligned}
\]
Since $n=p+q+1\ge3$, integration along this end up to radius $R$ gives
\[
\int_{\text{$j$-th end},\,\rho\le R} x^py^q\,ds
=\frac{\cos^p\psi_j\,\sin^q\psi_j}{n}R^n+O(R^{n-2}).
\]
The contribution from the remaining compact part of the profile curve is bounded. Summing over the $k$ ends, where $k=1$ for types (a) and (b) and $k=2$ for type (c), yields
\[
\mathrm{vol}(\Sigma\cap B_R)
=\frac{\omega_p\omega_q}{n}\left(\sum_{j=1}^{k}\cos^p\psi_j\,\sin^q\psi_j\right)R^n
+O(R^{n-2}).
\]
Dividing by $R^n$ and taking $R\to\infty$ proves the asserted limit, which is positive because $\psi_j\in(0,\frac\pi2)$ for every end.
\end{proof}

\section{Loops and genericity}\label{sec:loops}

\subsection{Loops}\label{sub:loops}

We first collect the elementary facts about plane curves that will be used, stated for the profile curves of \eqref{eq:flow}.

\begin{lemma}[Loops of a profile curve]\label{lem:laco}
Let $\gamma$ be a profile curve of \eqref{eq:flow}.
\begin{enumerate}
\item[(i)] Let $\gamma|_{[a,b]}$ be a loop, that is, a simple subarc with $\gamma(a)=\gamma(b)$, and let $\theta\in(-\pi,\pi]$ be its exterior angle at the corner, the angle from $\gamma'(b)$ to $\gamma'(a)$. Then $\theta\ne\pi$, and
\[
\Delta\sigma+\theta=\pm2\pi ,
\]
where $\Delta\sigma=\sigma(b)-\sigma(a)$ and the sign is that of the orientation of the loop.
\item[(ii)] Let $\gamma|_{[t_1,t_2]}$ be a subarc along which the tangent angle stays in an open interval $(\alpha,\alpha+\pi)$. Then $\gamma(t_2)\ne\gamma(t_1)$ and the direction of $\gamma(t_2)-\gamma(t_1)$ lies in that interval.
\item[(iii)] If $c>0$ and $\gamma$ has a self-intersection, then it has a loop.
\end{enumerate}
\end{lemma}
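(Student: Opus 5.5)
For (i) I will use the Hopf Umlaufsatz for piecewise smooth simple closed curves. The loop $\gamma|_{[a,b]}$ is a simple closed curve that is smooth except at the corner $P=\gamma(a)=\gamma(b)$. Its total turning is the smooth rotation $\Delta\sigma$ plus the exterior angle $\theta$ at $P$, and the theorem of turning tangents gives $\Delta\sigma+\theta=\pm2\pi$, with the sign given by the orientation. It remains to exclude $\theta=\pi$. That value would mean $\gamma'(b)=-\gamma'(a)$, a cusp. Then the two arcs leaving $P$ would be solutions of \eqref{eq:flow} through the same point with opposite tangents. By Lemma~\ref{lem:simetrias}(i), the reversal of the trajectory through $(P,\sigma(a))$ is the trajectory through $(P,\sigma(a)+\pi)$. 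By uniqueness, the arc of $\gamma$ just before $b$ would then coincide with the arc just after $a$, traversed backwards. This contradicts simplicity of the loop near $P$, since the two arcs would overlap. The tangents are distinct by definition, so they are never equal either. Hence $\theta\in(-\pi,\pi)$.

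For (ii), write $e=(\cos(\alpha+\frac\pi2),\sin(\alpha+\frac\pi2))$, and use the fact that the directions in $(\alpha,\alpha+\pi)$ are exactly those with positive inner product with $e$. Then $\langle\gamma(t_2)-\gamma(t_1),e\rangle=\int_{t_1}^{t_2}\sin(\sigma-\alpha)\,dt>0$. So the chord is nonzero and its direction makes a positive inner product with $e$, which means it lies in the open half-plane of directions $(\alpha,\alpha+\pi)$.

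For (iii), I will take the self-intersection $\gamma(t_1)=\gamma(t_2)$ with $t_1<t_2$ that minimizes $t_2-t_1$, or an infimum-type choice. The first step is to show such a minimizer exists. Self-intersection parameters cannot accumulate with $t_2-t_1\to0$, because a unit-speed $C^1$ curve is locally injective. They also stay in a compact parameter range: each end is eventually a graph with $\rho$ increasing (Theorem~\ref{thmB}), and $\rho$ is unimodal (Corollary~\ref{cor:unimodal}). Together these keep the set of pairs in a compact set, so a minimizing pair exists. For this pair, $\gamma|_{[t_1,t_2]}$ is simple, since an earlier repeat inside it would give a shorter pair. The tangents at $t_1$ and $t_2$ cannot coincide: if they did, uniqueness of solutions of \eqref{eq:flow} would make the trajectory periodic, which is impossible since $I$ is strictly increasing (Corollary~\ref{cor:mono}). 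Hence $\gamma|_{[t_1,t_2]}$ is a loop.

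The main obstacle is the compactness step in (iii), namely making sure that self-intersection pairs do not escape to infinity. I would handle it with Theorem~\ref{thmB}(iii). Beyond some $\rho$, each end is a graph over the $x$-axis with $x$ increasing, so it has no self-intersections. Moreover, for large $\rho$ the two ends of a type (c) curve lie in disjoint neighborhoods of their asymptotic rays, or are separated by the monotonicity of $x$ when the rays coincide.
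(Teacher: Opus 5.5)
Your parts (i) and (ii) are correct and essentially the paper's arguments. For $\theta\ne\pi$ you stop at $\gamma(b-\epsilon)=\gamma(a+\epsilon)$ and invoke simplicity, while the paper runs the same identity $\Gamma(b+t)=\mathcal R(\Gamma(a-t))$ to the midpoint $(a+b)/2$; either works.

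\textbf{The gap is in the compactness step of (iii).} You try to show that all self-intersection pairs lie in a compact parameter set. For two ends with the same asymptotic ray you appeal to ``the monotonicity of $x$'', and that does not work. That $x$ increases along each end only shows that each end is injective. The two ends are two graphs $y=Y_\alpha(x)$ and $y=Y_\omega(x)$ over a common half-line, both at distance $O(\rho^{-1})$ from the same ray, and nothing proved in the paper prevents them from crossing infinitely often. In fact Remark~\ref{rem:questoes} explicitly leaves open the number of crossings of the two branches of a curve of type (c). So the statement you are trying to establish is not available, and your justification for it is invalid.

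\textbf{You do not need it.} There are two ways to close the gap.
\begin{itemize}
\item The paper's route: fix one self-intersection $\gamma(u_0)=\gamma(v_0)$ and minimize $v-u$ only over pairs with $u_0\le u<v\le v_0$. On the compact interval $[u_0,v_0]$ the curvature is bounded, which gives a uniform injectivity length $\eta>0$. The set of such pairs with $v-u\ge\eta$ is then compact and nonempty. Any repeat inside the minimizing subarc again belongs to that set, so minimality still gives simplicity, and no information about the ends is used.
\item Rescuing your global minimization with the unimodality you already quote: since $\rho(t_1)=\rho(t_2)$ and $\rho$ is strictly decreasing and then strictly increasing around its minimum point $m$, every pair satisfies $t_1<m<t_2$. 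Hence along a minimizing sequence with $t_2-t_1\le D$, both parameters stay in $[m-D,m+D]$. Only that sequence needs to be precompact, not the whole set of pairs.
\end{itemize}
Either way, the final step excluding equal tangents, via periodicity and the strict monotonicity of $I$, is as in the paper.
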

\begin{proof}
\emph{(i).} The displayed identity is the rotation index theorem for a simple closed piecewise-$C^{1}$ curve with one corner, applied to $\gamma|_{[a,b]}$. See \cite[Section~4-5, Theorem of Turning Tangents]{doCarmoCurvas} or \cite{Hopf}. It remains to exclude $\theta=\pi$, that is, to show that the two tangents at the corner are never opposite. Write $\Gamma=(x,y,\sigma)$ for the solution of \eqref{eq:flow} whose projection to $\QQ$ is $\gamma$, a curve in the phase space $M$. Suppose the tangents at the corner were opposite, so that $\sigma(b)=\sigma(a)+\pi$ modulo $2\pi$ and hence
\[
\Gamma(b)=\mathcal R\left(\Gamma(a)\right),\qquad \mathcal R(x,y,\sigma):=(x,y,\sigma+\pi),
\]
since $\gamma(a)=\gamma(b)$. By Lemma~\ref{lem:simetrias}(i), $t\mapsto \mathcal R\left(\Gamma(-t)\right)$ is again a solution of \eqref{eq:flow}, and the two solutions $t\mapsto\Gamma(b+t)$ and $t\mapsto \mathcal R\left(\Gamma(a-t)\right)$ agree at $t=0$, so they agree for every admissible $t$, by uniqueness. Taking $t=(a-b)/2$ and writing $m:=(a+b)/2$ gives $\Gamma(m)=\mathcal R\left(\Gamma(m)\right)$, that is, $\sigma(m)=\sigma(m)+\pi$ modulo $2\pi$, which is absurd.

\emph{(ii).} Let $u$ be the unit vector of angle $\alpha+\frac\pi2$, so that $\langle v,u\rangle>0$ for every nonzero vector whose direction lies in $(\alpha,\alpha+\pi)$. Since $\gamma'(s)$ has this property for every $s\in[t_1,t_2]$,
\[
\left\langle\gamma(t_2)-\gamma(t_1),u\right\rangle=\int_{t_1}^{t_2}\left\langle\gamma'(s),u\right\rangle ds>0 ,
\]
so the chord is nonzero. Moreover the vectors $\gamma'(s)$ lie in the open convex cone spanned by the directions of $(\alpha,\alpha+\pi)$, and that cone is closed under sums and under multiplication by positive scalars, hence it contains the integral $\gamma(t_2)-\gamma(t_1)$, that is, the direction of the chord lies in $(\alpha,\alpha+\pi)$.

\emph{(iii).} Let $\gamma(u_0)=\gamma(v_0)$ with $u_0<v_0$. Since $|\gamma'|=1$ and the curvature $\sigma'$ is bounded on the compact interval $[u_0,v_0]$, there is $\eta>0$ such that $\gamma$ is injective on every subinterval of $[u_0,v_0]$ of length at most $\eta$, and in particular $v_0-u_0\ge\eta$. The set
\[
\mathcal K:=\left\{(u,v):u_0\le u<v\le v_0,\ v-u\ge\eta,\ \gamma(u)=\gamma(v)\right\}
\]
is therefore compact and nonempty, so it contains a pair $(u,v)$ minimizing $v-u$. The subarc $\gamma|_{[u,v]}$ is closed, and it is simple: a self-intersection $\gamma(u')=\gamma(v')$ with $u\le u'<v'\le v$ and $(u',v')\ne(u,v)$ would satisfy $v'-u'\ge\eta$, by the choice of $\eta$, and would therefore belong to $\mathcal K$ with $v'-u'<v-u$. It remains to check that the endpoint tangents are distinct. If $\gamma'(u)=\gamma'(v)$, then the phase points agree, $\Gamma(u)=\Gamma(v)$ in $M$. Uniqueness for \eqref{eq:flow} would make the corresponding trajectory periodic with period $v-u$, contradicting the strict monotonicity of $I$ and $J$ for $c>0$ (Corollary~\ref{cor:mono}). Thus $\gamma'(u)\ne\gamma'(v)$, and $\gamma|_{[u,v]}$ is a loop.
\end{proof}

The first consequence of Lemma~\ref{lem:laco}(i) is that a loop forces the tangent angle to sweep more than half a turn, and therefore to cross the two axes of the plane of directions.

\begin{lemma}[Signs along a loop]\label{lem:sinais}
Let $c>0$ and let $\gamma|_{[a,b]}$ be a loop. Then the unique zeros $t_I$ of $\sin\sigma$ and $t_J$ of $\cos\sigma$ lie in $(a,b)$, and
\begin{equation}\label{eq:sinais}
\sin\sigma(a)<0<\sin\sigma(b),\qquad\cos\sigma(a)<0<\cos\sigma(b).
\end{equation}
\end{lemma}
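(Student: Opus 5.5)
The plan is to combine the turning-tangent identity of Lemma~\ref{lem:laco}(i) with the global sign structure of Corollary~\ref{cor:mono}. The identity will force the tangent angle to sweep more than a half turn along the loop. The monotonicity will then force each of $\sin\sigma$ and $\cos\sigma$ to vanish exactly once, with the prescribed direction of crossing.

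\emph{Step 1: the sweep exceeds $\pi$.} By Lemma~\ref{lem:laco}(i), $\Delta\sigma=\sigma(b)-\sigma(a)=\pm2\pi-\theta$, with $\theta\in(-\pi,\pi]$ and $\theta\ne\pi$, so that $\theta\in(-\pi,\pi)$. Hence $|\Delta\sigma|>\pi$ for the continuous lift of $\sigma$ on $[a,b]$.

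\emph{Step 2: a zero of $\sin\sigma$ in $(a,b)$.} Suppose $\sin\sigma$ has no zero in $(a,b)$. Then $\sin\sigma$ has constant sign on $(a,b)$, and the continuous lift $\sigma|_{[a,b]}$ stays in a closed interval $[k\pi,(k+1)\pi]$ for some integer $k$. This is so even if $\sin\sigma$ vanishes at $a$ or $b$, since those values lie in the closure. It follows that $|\sigma(b)-\sigma(a)|\le\pi$, contradicting Step 1. The same argument with the intervals $[k\pi-\frac\pi2,k\pi+\frac\pi2]$ applies to $\cos\sigma$. By Corollary~\ref{cor:mono}, each of $\sin\sigma$ and $\cos\sigma$ vanishes at most once along the whole curve. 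Therefore their unique zeros $t_I$ and $t_J$ exist and lie in $(a,b)$, and in particular neither function vanishes at $a$ or at $b$.

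\emph{Step 3: signs at the endpoints.} Corollary~\ref{cor:mono} also says that the unique zero of $\sin\sigma$ is a passage from $-$ to $+$. Hence $\sin\sigma<0$ before $t_I$ and $\sin\sigma>0$ after it. Since $a<t_I<b$, this gives $\sin\sigma(a)<0<\sin\sigma(b)$. The same reasoning at $t_J$ gives $\cos\sigma(a)<0<\cos\sigma(b)$, which is \eqref{eq:sinais}.

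The only delicate point is Step 2. There one must argue with the continuous lift of $\sigma$ on $[a,b]$, and not with $\sigma$ modulo $2\pi$, so that ``constant sign of $\sin\sigma$'' really confines the lift to an interval of length $\pi$. One must also treat the possibility that a zero sits exactly at an endpoint of the loop. Both issues are handled by working with the closed interval as above.
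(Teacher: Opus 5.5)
Your proposal is correct and follows essentially the same route as the paper: Lemma~\ref{lem:laco}(i) gives $|\Delta\sigma|>\pi$, and Corollary~\ref{cor:mono} then supplies uniqueness of the zeros and the $-$ to $+$ sign change. Your contrapositive confinement of the lift to $[k\pi,(k+1)\pi]$ is a more explicit version of the paper's one-line observation that the open interval between $\sigma(a)$ and $\sigma(b)$, of length $>\pi$, contains a zero of $\sin$ and a zero of $\cos$, which the lift of $\sigma$ attains at interior instants.
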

\begin{proof}
By Lemma~\ref{lem:laco}(i), $|\Delta\sigma|=|\pm2\pi-\theta|>\pi$ on $[a,b]$. The open interval between $\sigma(a)$ and $\sigma(b)$ has length $>\pi$, so it contains a zero of $\sin$ and a zero of $\cos$, both attained at interior instants. By Corollary~\ref{cor:mono} these are the unique zeros $t_I$, $t_J$, and the signs change from $-$ to $+$.
\end{proof}

Figure~\ref{fig:loop} shows two curves of type (c) with a loop, together with the region bounded by it and the point closest to the origin.

The proof of Theorem~\ref{thmC} is now a matter of assembling these facts: the signs along a loop locate it, the cone lemma excludes a second one, and Green's theorem turns the turning of the tangent into an integral over the region bounded by the loop.

\begin{figure}[t]
\centering
\includegraphics[width=\textwidth]{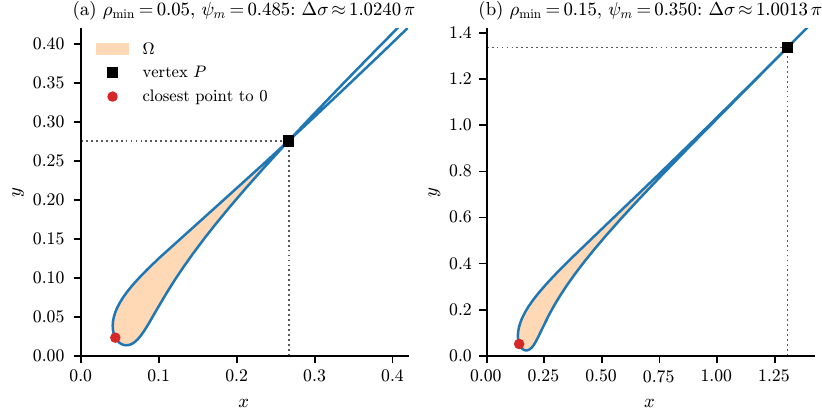}
\caption{Two loops, for $p=q=1$, $c=1$ (Theorem~\ref{thmC}), on the curves of type (c) whose closest points to the origin have polar coordinates $(\rho,\psi)=(0.05,0.485)$ in (a) and $(0.15,0.350)$ in (b). The loop passes through the closest point to the origin, and the region it bounds (shaded) lies in the box $\{x<x_P, y<y_P\}$ (dotted). The turning along it is $1.0240\pi$ in (a) and $1.0013\pi$ in (b). In (b) the two tangents at $P$ are almost opposite.}
\label{fig:loop}
\end{figure}

\begin{proof}[Proof of Theorem~\ref{thmC}]
\emph{Step 1: the curves of types (a) and (b) are simple.} This is Proposition~\ref{prop:cusp}.

\emph{Step 2: a curve of type (c) has at most one loop.} Let $\gamma|_{[a_1,b_1]}$ and $\gamma|_{[a_2,b_2]}$ be two distinct loops, $a_1\le a_2$. By Lemma~\ref{lem:sinais}, $t_I$ belongs to both intervals, so they are not disjoint. They are not nested, since $\gamma|_{[a_1,b_1]}$ is simple. Hence $a_1<a_2<b_1<b_2$, and $t_I,t_J\in(a_2,b_1)$. On $[a_1,a_2]$ we then have $\sin\sigma<0$ and $\cos\sigma<0$, so $\sigma$ stays in an open quadrant, and by Lemma~\ref{lem:laco}(ii) the chord $Q-P$, with $P=\gamma(a_1)=\gamma(b_1)$ and $Q=\gamma(a_2)=\gamma(b_2)$, is nonzero and points into the open third quadrant. On $[b_1,b_2]$, $\sin\sigma>0$ and $\cos\sigma>0$, and the same chord $Q-P$ points into the open first quadrant. This is absurd. ($Q\ne P$, since $\gamma(a_1)=\gamma(a_2)$ with $a_1<a_2<b_1$ would contradict the simplicity of $\gamma|_{[a_1,b_1]}$.)

\emph{Step 3: proof of item (i).} By Corollary~\ref{cor:unimodal}, $\rho$ is strictly decreasing before its minimum point $m$ and strictly increasing after it. Since $\rho(a)=\rho(b)$, we get $a<m<b$. By Lemma~\ref{lem:sinais}, $t_J\in(a,b)$ and $\cos\sigma$ changes sign there, from $-$ to $+$, and, by Corollary~\ref{cor:mono}, it has no other zero, so $\cos\sigma<0$ on $[a,t_J)$ and $\cos\sigma>0$ on $(t_J,b]$. As $x'=\cos\sigma$, the function $x$ is therefore strictly decreasing on $[a,t_J]$ and strictly increasing on $[t_J,b]$, so it attains its maximum on $[a,b]$ at one of the two endpoints. Since $x(a)=x(b)=x_P$, that maximum is $x_P$, and $x<x_P$ at every interior instant. The same argument with $\sin\sigma$ and $t_I$ gives $y<y_P$ on $(a,b)$.

\emph{Step 4: proof of items (ii) and (iii).} Since $\sigma'=-A(x)y'+B(y)x'$, Green's theorem gives
\[
\begin{aligned}
\Delta\sigma&=\oint_{\partial\Omega}\left(B dx-A dy\right)=\varepsilon\iint_\Omega\left(-A'(x)-B'(y)\right)dx dy\\
&=\varepsilon\iint_\Omega\left(\frac p{x^{2}}+\frac q{y^{2}}-2c\right)dx dy,
\end{aligned}
\]
where $\varepsilon=\pm1$ is the orientation of $\partial\Omega$. By Lemma~\ref{lem:laco}(i), $\Delta\sigma=2\pi\varepsilon-\theta$ with $|\theta|<\pi$, so $\sgn\Delta\sigma=\varepsilon$ and the integral equals $|\Delta\sigma|>\pi$. On the other hand, by \eqref{eq:sinais}, $\sigma(a)$ lies in the open third quadrant and $\sigma(b)$ in the open first quadrant, and on $[a,b]$ the angle $\sigma$ crosses exactly one multiple of $\pi$ and exactly one odd multiple of $\frac\pi2$, hence $|\Delta\sigma|<\frac{3\pi}2$. Finally, suppose that $\Omega\subset[\sqrt{p/c},\infty)\times[\sqrt{q/c},\infty)$. There $p/x^{2}\le c$ and $q/y^{2}\le c$, so the integrand is nonpositive and the integral is at most $0$, contradicting $\varepsilon\Delta\sigma=|\Delta\sigma|>\pi$. Hence $\Omega$ contains a point with $x<\sqrt{p/c}$ or with $y<\sqrt{q/c}$. Since a nonconstant linear function attains its minimum on the compact set $\overline\Omega$ only at points of $\partial\Omega$, the loop contains such a point as well.

\emph{Step 5: the turning along the whole curve.} Let $\gamma$ be of type (c) and let $\Delta\sigma:=\lim_{s\to s_+}\sigma-\lim_{s\to s_-}\sigma$. By Theorem~\ref{thm:conico}, $\sigma\to\psi_\omega$ at $\omega$ and, by reversibility, $\sigma\to\psi_\alpha+\pi$ at $\alpha$, modulo $2\pi$. Thus $\sin\sigma$ and $\cos\sigma$ are negative near $\alpha$ and positive near $\omega$, so by Corollary~\ref{cor:mono} each changes sign exactly once, and $\sigma$ crosses exactly one multiple of $\pi$ and one odd multiple of $\frac\pi2$. Hence $\Delta\sigma=\psi_\omega-\psi_\alpha+\pi$ if $\sigma$ increases through $\frac{3\pi}2$ and $2\pi$, and $\Delta\sigma=\psi_\omega-\psi_\alpha-\pi$ if it decreases through $\pi$ and $\frac\pi2$.
\end{proof}

We close this subsection with the questions left open by Theorem~\ref{thmC} and Proposition~\ref{prop:lado-global}.

\begin{remark}\label{rem:questoes}
Theorem~\ref{thmC} bounds the number of loops, not the number of self-intersection points: the two ends of a curve of type (c) may cross each other several times outside the loop. The classification thus leaves open the number of crossings of the two branches of a curve of type (c) and the number of curves of type (a) with a prescribed limit angle. By Corollary~\ref{cor:unimodal}, each branch of a curve of type (c), starting at its closest point to the origin, is a polar graph $\psi=\psi_\pm(\rho)$, so that the curve is embedded if and only if $\psi_+(\rho)\ne\psi_-(\rho)$ for every $\rho$ beyond the minimum.

Numerical examples suggest that both phenomena occur only at small scales, and that the second one occurs only near the Clifford ray. For $p=q=1$ and $c=1$, they suggest that a curve of type (c) is embedded when its closest point to the origin lies at distance at least $0.25$ from it, and has self-intersections when this distance is at most $0.1$, their number increasing as the distance tends to $0$. They also suggest that the limit angle $\psi_\infty(x_0)$ of the curve of type (a) leaving the $x$-axis at $x_0$ is strictly decreasing for $x_0\ge0.27$, with $\psi_\infty(x_0)\to0$ as $x_0\to\infty$, so that every angle in $(0,\psi_0-4\cdot10^{-4})$ is the limit angle of exactly one curve of type (a), while uniqueness fails in a window of width about $10^{-2}$ around $\psi_0$. By homothety, these distances scale with $c^{-1/2}$.

Near the origin, in the variable $t=\int ds/\rho$, the equations \eqref{eq:polar} and \eqref{eq:wexata} become
\[
\frac{d\psi}{dt}=-\sin w,\qquad \frac{dw}{dt}=-(n+c\rho^{2})\sin w-\kappa(\psi)\cos w,\qquad \frac{d\rho}{dt}=\rho\cos w,
\]
with $\kappa$ as in \eqref{eq:kappa}. On the invariant plane $\{\rho=0\}$ this is the system of the minimal case $c=0$, whose bi-rotational solutions were studied in \cite{Alencar93,ABPRS}. The linearization of this system at the Clifford direction $(\psi,w)=(\psi_0,0)$ is
\[
\ddot\xi+n\dot\xi+2(n-1)\xi=0,\qquad \xi=\psi-\psi_0,
\]
with characteristic polynomial $\lambda^{2}+n\lambda+2(n-1)$. Its roots are complex exactly when $n\le6$, that is, when $p+q\le5$, which is the range in which $\mathcal C$ is unstable \cite{Simons}. In that range the linearized solutions oscillate around the Clifford direction. This suggests the following questions.
\begin{enumerate}
\item[(i)] For $p+q\le5$, are there infinitely many curves of type (a) asymptotic to the Clifford cone, and is the number of self-intersections of the curves of type (c) unbounded?
\item[(ii)] For $p+q\ge6$, does the polar angle of every curve of type (a) stay below $\psi_0$, so that Proposition~\ref{prop:lado-global} applies to all of them?
\item[(iii)] Is every curve of type (c) whose closest point to the origin lies far enough from it embedded, and is $x_0\mapsto\psi_\infty(x_0)$ strictly monotone for $x_0$ large?
\end{enumerate}
\end{remark}

\subsection{Genericity}\label{sec:generic}

For every $c$, the flow of \eqref{eq:flow} preserves $\mathfrak m=\Phi dx dy d\sigma$. Indeed, write the field of \eqref{eq:flow} as
\[
V(x,y,\sigma)=\left(\cos\sigma,\ \sin\sigma,\ -A(x)\sin\sigma+B(y)\cos\sigma\right)
\]
on $M=\QQ\times\s^1$, with coordinates $(x,y,\sigma)$. Since $\Phi=f(x)g(y)$ with $f'=Af$ and $g'=Bg$, we have $\partial_x\Phi=A\Phi$, $\partial_y\Phi=B\Phi$ and $\partial_\sigma\Phi=0$, so that
\[
\begin{aligned}
\dive(\Phi V)&=\partial_x\left(\Phi\cos\sigma\right)+\partial_y\left(\Phi\sin\sigma\right)+\partial_\sigma\left(\Phi\left(-A\sin\sigma+B\cos\sigma\right)\right)\\
&=A\Phi\cos\sigma+B\Phi\sin\sigma+\Phi\left(-A\cos\sigma-B\sin\sigma\right)=0.
\end{aligned}
\]
Since a flow preserves $\varrho dV$ whenever $\dive(\varrho V)=0$, we conclude that $\mathfrak m$ is invariant. For $c>0$ its total mass is infinite, but it is finite on bounded sets, which is all we need.

We denote by $\varphi_t$ the local flow of \eqref{eq:flow} on $M$, by $(s_-(v),s_+(v))$ the maximal interval of the solution with $\varphi_0(v)=v$, and by $x(v),y(v),\rho(v)$ the corresponding coordinates of the point $v\in M$. Set
\[
C^{\pm}:=\{v\in M:\ s_\pm(v)\text{ is finite}\},
\]
the set of initial conditions whose trajectory ceases in finite arclength in the corresponding time direction. These sets are Borel: if $s_+(v_0)>T$, the solution through $v_0$ is defined on $[0,T]$ and its trace is compact, so by continuous dependence the same holds for every $v$ near $v_0$. Thus $\{s_+>T\}$ is open, $s_+$ is lower semicontinuous, and $C^{+}=\bigcup_{j\in\N}\{s_+\le j\}$ is a countable union of closed sets. The same argument applies to $s_-$.

The trajectories that reach the boundary of $\QQ$ in finite time are the ones to be discarded, and they form a null set, by a collar estimate.

\begin{lemma}[Incomplete trajectories are negligible]\label{lem:nulo}
Let $c>0$. Then $\mathfrak m(C^{+})=\mathfrak m(C^{-})=0$.
\end{lemma}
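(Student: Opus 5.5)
The plan is a collar estimate based on the invariance of $\mathfrak m$, established just above. It suffices to treat $C^{+}$. The reversal $\mathcal R(x,y,\sigma)=(x,y,\sigma+\pi)$ of Lemma~\ref{lem:simetrias}(i) maps $C^{-}$ onto $C^{+}$. It also preserves $\mathfrak m$, because $\Phi$ does not depend on $\sigma$. Since $C^{+}$ is a countable union of the Borel sets
\[
D(R,T):=C^{+}\cap\{\rho\le R\}\cap\{s_+\le T\},\qquad R,T\in\N,
\]
it is enough to show that $\mathfrak m(D(R,T))=0$ for fixed $R$ and $T$.

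\emph{Step 1: geometry of the collar.} If $v\in D(R,T)$ and $0\le t<s_+(v)$, then $\rho(\varphi_t(v))\le R+T$, because the speed is $1$. If moreover $s_+(v)-t\le\delta$, then by Proposition~\ref{prop:endpoints}(i) the projection of $\varphi_t(v)$ lies within distance $\delta$ of the limit point on $\partial\QQ$. Hence
\[
\varphi_t(v)\in N_\delta:=\{\rho\le R+T,\ \min(x,y)<\delta\}.
\]
On $\{y<\delta,\ \rho\le R+T\}$ we have $\Phi\le C_{R,T}\,\delta^{q}$, and the $dx\,dy\,d\sigma$-volume of this set is at most $2\pi(R+T)\delta$. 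The same bound holds with $p$ on $\{x<\delta\}$. Since $p,q\ge1$, this gives $\mathfrak m(N_\delta)\le C\delta^{2}$, with $C$ independent of $\delta\le1$.

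\emph{Step 2: slicing by remaining time.} Fix $\delta>0$ and, for $k=0,1,\dots,\lceil T/\delta\rceil$, let
\[
D_k:=D(R,T)\cap\{k\delta\le s_+<(k+1)\delta\}.
\]
Every point of $D_k$ has $s_+>k\delta$, so the map $\varphi_{k\delta}$ is defined on all of $D_k$. It is injective, and it preserves $\mathfrak m$ on its domain, since $\dive(\Phi V)=0$. The image $\varphi_{k\delta}(D_k)$ consists of points whose remaining time lies in $[0,\delta)$, so by Step 1 it is contained in $N_\delta$. Therefore
\[
\mathfrak m(D(R,T))\le\sum_{k}\mathfrak m(D_k)\le\left(\frac T\delta+2\right)C\delta^{2}\xrightarrow[\delta\to0]{}0 .
\]

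The point needing the most care is the measure preservation of $\varphi_{k\delta}$ on a set where it is only a local flow. I would handle it with Liouville's theorem on the open domain $\{s_+>k\delta\}$: there $\varphi_{k\delta}$ is a diffeomorphism onto its image with Jacobian $\Phi/\Phi\circ\varphi_{k\delta}$, because $\dive(\Phi V)=0$. The sets $D_k$ are Borel subsets of that domain. Note that the exponent bookkeeping works only because $\Phi$ vanishes to order at least one at each axis. This supplies the extra factor of $\delta$ that beats the $T/\delta$ slices.
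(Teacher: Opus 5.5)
Your proof is correct and is essentially the paper's argument. It uses the same collar bound $\mathfrak m(\{\min(x,y)\le\delta,\ \rho\le R'\})=O(\delta^{2})$ coming from the vanishing of $\Phi$ at the axes, the same slicing by remaining arclength into $O(T/\delta)$ slabs that the invariance of $\mathfrak m$ moves into the collar, and the same reversal $\mathcal R$. The only difference is direction: you push slabs of $C^{+}$ forward into the collar, while the paper covers $C^{-}$ by flowed-out collar pieces.

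One small slip to fix. A point with $s_+=k\delta$ belongs to your $D_k$, but $\varphi_{k\delta}$ is not defined there. Use slabs $k\delta<s_+\le(k+1)\delta$ instead, as the paper does, and define $N_\delta$ with $\min(x,y)\le\delta$; the estimate is unchanged.
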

\begin{proof}
Let $k:=\min(p,q)$ and, for $R>0$ and $\varepsilon\in(0,1)$, let 
\[
G_{\varepsilon,R}:=\{u\in C^{-}:-s_-(u)\le\varepsilon,\rho(u)\le R\}.
\]
Every $u\in G_{\varepsilon,R}$ satisfies $x(u)\le\varepsilon$ or $y(u)\le\varepsilon$. Indeed, as in the proof of Proposition~\ref{prop:endpoints}, the profile curve $\gamma_u$ of $u$, with $\gamma_u(0)=(x(u),y(u))$, converges as $s\to s_-(u)$ to a point $P\in\partial\QQ$, and
\[
\left|P-(x(u),y(u))\right|=\lim_{s\to s_-(u)}\left|\gamma_u(s)-\gamma_u(0)\right|\le\lim_{s\to s_-(u)}|s|=-s_-(u)\le\varepsilon ,
\]
again because $|\gamma_u'|=1$. Hence
\[
G_{\varepsilon,R}\subset
\left\{v\in M: x(v)\le\varepsilon,\ \rho(v)\le R\right\}\cup
\left\{v\in M: y(v)\le\varepsilon,\ \rho(v)\le R\right\},
\]
and the two sets on the right are contained in $(0,\varepsilon]\times(0,R]\times\s^{1}$ and in $(0,R]\times(0,\varepsilon]\times\s^{1}$, respectively. Integrating $\Phi=f(x)g(y)$ over these,
\[
\mathfrak m(G_{\varepsilon,R})\le2\pi\left(\int_0^{\varepsilon}f dx\int_0^{R}g dy+\int_0^{R}f dx\int_0^{\varepsilon}g dy\right)\le K_R\varepsilon^{k+1},
\]
where $K_R$ depends only on $p,q,c$ and $R$: for $0<\varepsilon\le1$ one has $\int_0^{\varepsilon}f dx\le e^{c/2}\varepsilon^{p+1}/(p+1)$ and $\int_0^{\varepsilon}g dy\le e^{c/2}\varepsilon^{q+1}/(q+1)$, while the integrals over $[0,R]$ are finite.

Let $T,R>0$ and 
\[
C^{-}_{T,R}:=\{v\in C^{-}:-s_-(v)\le T,\rho(v)\le R\}.
\] 
For $v\in C^{-}_{T,R}$ and $j$ with $-s_-(v)\in(j\varepsilon,(j+1)\varepsilon]$, the point $u=\varphi_{-j\varepsilon}(v)$ is defined, satisfies $-s_-(u)=-s_-(v)-j\varepsilon\le\varepsilon$ and $s_+(u)>j\varepsilon$, so that $\varphi_{j\varepsilon}(u)=v$, and
\[
\rho(u)\le\rho(v)+\left|\gamma_v(-j\varepsilon)-\gamma_v(0)\right|\le R+j\varepsilon\le R+T ,
\]
because $|\gamma_v'|=1$. Thus, $u\in G_{\varepsilon,R+T}$. Hence $C^{-}_{T,R}$ is covered by the $N:=\lceil T/\varepsilon\rceil$ sets $\varphi_{j\varepsilon}(G_{\varepsilon,R+T}\cap\{s_+>j\varepsilon\})$, each of which has the same $\mathfrak m$-measure as a subset of $G_{\varepsilon,R+T}$, by the invariance of $\mathfrak m$. Therefore
\[
\mathfrak m(C^{-}_{T,R})\le\left(\frac T\varepsilon+1\right)K_{R+T}\varepsilon^{k+1}\xrightarrow[\varepsilon\to0]{}0 ,
\]
because $k+1\ge2$.
Taking the union over $T,R\in\N$ gives $\mathfrak m(C^{-})=0$. Finally, the map $\mathcal R(x,y,\sigma)=(x,y,\sigma+\pi)$ preserves $\mathfrak m$, because $\Phi$ does not depend on $\sigma$, and maps $C^{-}$ onto $C^{+}$, by the reversibility of Lemma~\ref{lem:simetrias}(i). Therefore, $\mathfrak m(C^{+})=0$ as well.
\end{proof}

We conclude the paper with the proof of Theorem~\ref{thmD}. The initial conditions of type (c) are those whose trajectory meets the critical points of $\rho$, which form a surface transversal to the flow by Corollary~\ref{cor:unimodal}, and the complement is null by Lemma~\ref{lem:nulo}.

\begin{proof}[Proof of Theorem~\ref{thmD}]
Consider the function
\[
\eta(v):=\left\langle\gamma,\gamma'\right\rangle=\rho\rho'=x\cos\sigma+y\sin\sigma =\rho\cos(\sigma-\psi),
\]
defined for $v=(x,y,\sigma)\in M$, where $\psi$ is the polar angle of $(x,y)$, and let $Z:=\{\eta=0\}$. Thus, $Z=S\cup \mathcal R(S)$, where
\[
S:=\left\{(x,y,\sigma)\in M:\sigma=\psi-\tfrac\pi2\right\},\qquad \mathcal R(x,y,\sigma):=(x,y,\sigma+\pi).
\]

\emph{Step 1: $Z$ is transversal to the flow.} Along a solution, $\eta=(\rho^{2}/2)'$ and, by Corollary~\ref{cor:unimodal},
\[
\frac{d}{dt}\,\eta(\varphi_t(v))=\left(\frac{\rho^{2}}2\right)''=n+c\rho^{2}>0
\]
at every point of $Z$. The left-hand side is the derivative of $\eta$ in the direction of the field $V$ of \eqref{eq:flow}, that is, $\left\langle\nabla\eta,V\right\rangle$. Hence $\nabla \eta\ne0$ at every point of $Z$, so $0$ is a regular value of $\eta$ and $Z=\eta^{-1}(0)$ is a smooth surface, and $V$ is nowhere tangent to $Z$, since $\left\langle\nabla \eta,V\right\rangle>0$ there, so $Z$ is transversal to the flow. Moreover $\eta$ is strictly increasing at each of its zeros along a trajectory, so each trajectory meets $Z$ at most once.

\emph{Step 2: the flow-out of $Z$ is open.} Let $U:=\{\varphi_t(z):z\in Z,\ t\in(s_-(z),s_+(z))\}$, which is the set of $v\in M$ whose trajectory meets $Z$. Define
\[
F(t,v):=\eta(\varphi_t(v))
\]
on the domain of the flow, an open subset of $\R\times M$ on which $F$ is smooth. Let $v_0\in U$ and let $t_0$ be such that $F(t_0,v_0)=0$, that is, $\varphi_{t_0}(v_0)\in Z$. By Step 1,
\[
\partial_tF(t_0,v_0)=n+c\rho\left(\varphi_{t_0}(v_0)\right)^{2}>0 ,
\]
so the implicit function theorem gives a neighborhood $W$ of $v_0$ in $M$ and a smooth function $\tau\colon W\to\R$ with $\tau(v_0)=t_0$ and $F(\tau(v),v)=0$ for every $v\in W$. Thus $\varphi_{\tau(v)}(v)\in Z$, that is, $W\subset U$, and $U$ is open.

\emph{Step 3: $U$ is the set of initial conditions of type (c).} A trajectory meets $Z$ exactly when $\rho$ has a critical point along it. On the Clifford trajectory $\rho'$ never vanishes (it equals $\pm1$, according to the orientation), and on a trajectory of type (a) or (b) $\rho'$ never vanishes either: when the curve is oriented away from the axis, $\rho$ is strictly increasing by Proposition~\ref{prop:cusp}, and reversing the orientation only changes the sign of $\rho'$. Thus these trajectories avoid $Z$. Conversely, a trajectory with a critical point of $\rho$ is of type (c), as shown in the proof of Theorem~\ref{thmA}.

\emph{Step 4: the complement is null.} By Theorem~\ref{thmA}, $M\setminus U$ consists of the Clifford ray and of the trajectories of types (a) and (b). Each of them has an endpoint reached in finite arclength, the origin in the first case and a point of one of the axes in the other two, so $M\setminus U\subset C^{+}\cup C^{-}$, which is $\mathfrak m$-null by Lemma~\ref{lem:nulo}. Hence $U$ is open and of full $\mathfrak m$-measure.
\end{proof}

\end{document}